\theoremstyle{plain}
\newtheorem{theorem}{Theorem}[section]
\newtheorem{lemma}[theorem]{Lemma}
\newtheorem{proposition}[theorem]{Proposition}
\theoremstyle{definition}
\newtheorem{remark}[theorem]{Remark}
\numberwithin{theorem}{section}
\newcommand{\N}{\mathbb{N}}
\newcommand{\Z}{\mathbb{Z}}
\newcommand{\R}{\mathbb{R}}
\newcommand{\CC}{\mathbb{C}}
\newcommand{\E}{\mathbb{E}}
\newcommand{\Proba}{\mathbb{P}}
\newcommand{\SP}{\mathcal{S}\mathcal{P}}
\newcommand{\Ber}{\textrm{Bernoulli}}
\newcommand{\Bin}{\textrm{Binomial}}
\newcommand{\Tpast}{{T_{\text{past}}}}
\newcommand{\Mof}[1]{\mathcal{M}_1(#1)}
\newcommand\Set[2]{\{\,#1\mid#2\,\}}
\newcommand\lrSet[2]{\left\{\, #1 \mid #2 \, \right\}}
\newcommand{\fonction}[5]{ #1:\begin{array}{lrcl} 
    & #2 & \longrightarrow & #3 \\
& #4 & \longmapsto & #5 \end{array}}
\newcommand{\send}[3]{s^{\text{end}}_{#1}\left(#2 , #3\right)}
\tikzset{
 midarrow/.style={postaction={decorate,decoration={
        markings,
        mark=at position .5 with {\arrow[#1]{stealth}}
      }}},
}
\date{
	\today
}
\newcommand*{\email}[1]{
    \normalsize\href{mailto:#1}{#1}\par
    }
\begin{document}

\title{Regularity of the time constant for last passage percolation on complete directed acyclic graphs}
\author{Benjamin Terlat \thanks{Partially supported by the Agence Nationale de la Recherche, Grant Number ANR 19-CE40-0025 (ANR ProGraM)}}
\affil{Université Paris-Saclay, CNRS, Laboratoire de mathématiques d'Orsay, 91405, Orsay, France. \\ Université Paris-Saclay, CNRS, CEA, Institut de Physique Théorique, 91191, Gif-sur-Yvette, France. \\ \email{benjamin.terlat@universite-paris-saclay.fr}}

	\maketitle
	\begin{abstract} 
    We study the time constant $C(\nu)$ of last passage percolation on the complete directed acyclic graph on the set of non-negative integers, where edges have i.i.d. weights with distribution $\nu$ with support included in $\{-\infty\}\cup\mathbb{R}$. 
    We show that $\nu \mapsto C(\nu)$ is strictly increasing in $\nu$. We also prove that $C(\nu)$ is continuous in $\nu$ for a large set of measures $\nu$. Furthermore, when $\nu$ is purely atomic, we show that $C(\nu)$ is analytic with respect to the weights of the atoms. In the special case of two positive atoms, it is an explicit rational function of these weights.
	\end{abstract}

\noindent\emph{Keywords:} last passage percolation, particle systems, renovation theory, coupling, stationarity, random graphs, Markov chains. 

\section{Introduction}
\label{sec:intro}

We study the problem of last passage percolation on a complete directed acyclic graph. Let $\Z^+$ denote the set of non-negative integers. We consider the directed graph with vertex set $\Z^+$ and (directed) edge set $\Set{(i,j)}{0 \leq i < j}$. Let $\nu$ be a probability distribution on $\{-\infty\}\cup\R$ and let $X = (X_{i,j})_{0\leq i < j}$ be i.i.d. random variables with distribution $\nu$. For $0 \leq i < j$, we assign weight $X_{i,j}$ to the directed edge $(i,j)$. 
We call \emph{directed path} a sequence of integers $ \pi  = (i_1, \dots, i_k)$ such that $0 \leq i_1 <  \dots < i_k$. The associated weight $w_{\pi}$ of the path $\pi$ is defined by 
$$w_{\pi} = X_{i_1, i_2} +  \dots + X_{ i_{k-1},i_k}.$$ 
The quantity we are interested in is the weight of a heaviest path starting at $0$ and ending at $n$ (see Figure \ref{fig:1}):
\begin{figure}[t]
\centering
\begin{tikzpicture}[
    middlearrow/.style 2 args={
        decoration={             
            markings, 
            mark=at position 0.5 with {\arrow[xshift=3.333pt]{triangle 45}, \node[#1] {#2};}
        },
        postaction={decorate}
    },
    scale=1.5
]

    \node[circle,fill=black,inner sep=0pt,minimum size=5pt,label=below:{$0$}] (0) at (6,0) {};
    \node[circle,fill=black,inner sep=0pt,minimum size=5pt,label=below:{$1$}] (1) at (8,0) {};
    \node[circle,fill=black,inner sep=0pt,minimum size=5pt,label=below:{$2$}] (2) at (10,0) {};
    \node[circle,fill=black,inner sep=0pt,minimum size=5pt,label=below:{$3$}] (3) at (12,0) {};
    \node[circle,fill=black,inner sep=0pt,minimum size=5pt,label=below:{$4$}] (4) at (14,0) {};

    \draw[black!100,middlearrow={below}{1}] (0) -- (1);
    \draw[black!100,middlearrow={below}{0}] (1) -- (2);
    \draw[black!100,middlearrow={below}{2}] (2) -- (3);
    \draw[black!100,middlearrow={below}{-3}] (3) -- (4);

    \draw[black!100] (0) edge[bend left, middlearrow={below}{0.7}] (2);
    \draw[black!100] (1) edge[bend left, middlearrow={below}{0.5}] (3);
    \draw[black!100] (2) edge[bend left, middlearrow={below}{0.7}] (4);
    
    \draw[black!100] (0) edge[bend right, middlearrow={below}{0.9}] (3);
    \draw[black!100] (1) edge[bend right, middlearrow={below}{0.2}] (4);

    \draw[black!100] (0) edge[bend left, middlearrow={below}{1}] (4);

\end{tikzpicture}
\caption{Illustration of the last passage percolation problem on the complete graph with $5$ vertices. With the weights given below each edge of this graph, $W_0 = 0$, $W_1=W_2=1$, $W_3 =3$, and $W_4=1.7$. A heaviest path from $0$ to $4$ is $\pi=(0,1,2,4)$. }
\label{fig:1}
\end{figure}
\begin{align}\label{eq:defWn}
W_n = \max \Set{ w_{\pi}}{ \pi = (i_1, \dots, i_k) \text{ is a directed path with } i_1=0 
\text{ and }i_k=n}.
\end{align} 
As usual with last passage percolation models, the sequence of last passage times $(W_n)_{n \geq 0}$ satisfies a subadditive property, namely:
$$W_{m} \geq W_n + W_{n,m}, $$ 
for all $n < m$ where 
\begin{align}\label{eq:defWn2}
W_{n,m} = \max \Set{ w_{\pi}}{ \pi = (i_1, \dots, i_k) \text{ directed path with } i_1=n \text{ and }i_k=m}
\end{align}
 has the same distribution as $W_{m-n}$.
Therefore, by Kingman's subadditive ergodic theorem, there exists a deterministic constant $C(\nu) \in \R\cup\{\pm\infty\}$, called the \emph{time constant for last passage percolation}, such that 
\begin{align}\label{eq:defC}
\frac{W_n}{n} \xrightarrow[n \xrightarrow{}\infty]{\text{a.s.}} C(\nu) .
\end{align}
The main goal of this article is to study the regularity of the function $\nu \mapsto C(\nu)$ and, in particular, study the analyticity of $C(\nu)$ with respect to the probabilities of some atoms of $\nu$. Indeed, in statistical physics, the analyticity of observables is of particular importance because a breach of analyticity usually pinpoints a phase transition in the model. A canonical example is the 
non-differentiability of the function $p\mapsto \theta(p)$, the probability that the origin belongs to an infinite cluster in classical Bernoulli bond percolation on $\Z^2$, at the critical value $p_c = \frac{1}{2}$. A similar result holds for the magnetization in the Ising model. One may refer to \cite{MR852458, MR874906} and \cite{MR894398} for more details. We also point out that results similar to those stated in this paper were previously obtained in \cite{MR4474502, MR4234130} for Bernoulli first-passage percolation on the lattice $\Z^d$ in the supercritical regime. 

Let us note that there is a trivial scaling property of the model when multiplying all the weights by a deterministic constant $M>0$: let $X$ be a random variable with distribution $\nu$ and let $\nu_M$  denote the distribution of the random variable $M X$. It holds that
\begin{align}\label{property:rescalingProperty}
C(\nu_M) =M \cdot C(\nu).
\end{align}
However, we point out that there is no such simplification when adding a deterministic constant to every weight (because the heaviest paths do not have a fixed number of edges). In particular, we do not know any simple relation between $C(\nu)$ and $C(\nu + M)$. 

For $x \in \{-\infty\}\cup\R$, let us denote by $\delta_x$ the Dirac measure at $x$. We remark that setting an edge with weight $-\infty$ is equivalent to removing this edge from the graph (at least for computing the heaviest paths).
As a consequence, when $\nu = (1-p)\delta_{-\infty} + p \delta_1$, the last passage percolation problem is equivalent to the study of the longest path for the Barak-{E}rd\H{o}s graph model \cite{MR763980}, which is a directed acyclic version of the {E}rd\H{o}s-Rényi model. 
This specific case has known applications in applied mathematics on stability in queues \cite{MR2028221}, in biology on food chains \cite{MR1164838,doi:10.1098/rspb.1986.0059}, and in computer science on parallel computing \cite{10.5555/324493.324628, MR1266246}. 
In \cite{MR4010962, MaRa2016}, Mallein and Ramassamy proved that the function $p \in (0,1] \mapsto C((1-p)\delta_{-\infty}+p\delta_1)$ is analytic, that its Taylor expansion around $1$ has integer coefficients, and they computed the first two terms of the asymptotic expansion of $p \mapsto C((1-p)\delta_{-\infty}+p\delta_1)$ around $0$. 
Those results are consequences of the coupling between last passage percolation on Barak-{E}rd\H{o}s graphs and the infinite-bin model, which is an interacting particle system introduced by Foss and Konstantopoulos in \cite{MR2028221}.

A simple generalization of the last passage percolation problem for Barak-{E}rd\H{o}s graphs is the case where  $\nu = (1-p) \delta_x + p \delta_1$ for $x \in \{\pm \infty\}\cup\R$. It is proved in \cite{https://doi.org/10.48550/arxiv.2006.01727} that the time constant $C((1-p)\delta_x + p\delta_1)$ seen as a function of $x$ is strictly increasing and convex for all $p \in (0,1)$, and that its non-differentiability points are the non-negative integers, the integers greater than one and their reciprocals.

Kingman's subadditive theorem ensures only that $C(\nu) \in \R \cup \{ \pm\infty\}$ so that the time constant may possibly be infinite. Let 
$$
M_{\nu} = \inf\{t \in \R, \nu([t,+\infty))=0\}
$$
denote the essential supremum of $\nu$. Obviously, we have $W_n \leq n M_{\nu}$ almost surely and therefore $C(\nu)$ is necessarily finite wherever $\nu$ has upper-bounded support. A less restrictive sufficient condition to ensure that $C(\nu) \in [0,+\infty)$ is given in \cite{MR3161646} for probability distributions with support included in $\{- \infty\}\cup \R_+$. It states that $C(\nu)$ is finite when $\E\left[ \max(0,X_{1,2})^2 \right] < +\infty$. Reciprocally, it has been shown that if $\nu$ is regularly varying with index $s \in (0,2)$ (which implies in particular that $\E\left[ \max(0,X_{1,2})^2 \right] = +\infty$), then $C(\nu)=+\infty$. In fact, this result can easily be extended to all probability distributions $\nu$ on $\{-\infty\}\cup\R$ by approximations and using the monotonicity of $\nu \mapsto C(\nu)$. However, the study of the time constant $C(\nu)$ is greatly simplified when the support of the weight distribution is upper-bounded and this will be our standing assumption throughout the paper. 

Let us mention that the assumption that $\nu$ has finite essential supremum was previously enforced in a work by Foss, Konstantopoulos, Mallein and Ramassamy in \cite{https://doi.org/10.48550/arxiv.2110.01559} where they used this fact to study the last passage percolation model via a coupling with an interacting particle system called \emph{max growth system} (MGS) which is a generalization of the infinite-bin model introduced in \cite{MR2028221}.  Using tools from renovation theory, they proved that it is possible to perform perfect simulations of the time constant $C(\nu)$ when $M_{\nu}< +\infty$. This connection between last passage percolation on a directed complete graph and the MGS is also an essential ingredient for several proofs of this paper.

The remainder of the introduction is devoted to presenting the main results proved in the later sections of this paper.

We first study the monotonicity of the function $\nu \mapsto C(\nu)$. If $\nu_1$ and $\nu_2$ are two probability distributions on $\{-\infty\}\cup\R$ such that $\nu_1$ is stochastically dominated by $\nu_2$, then $C(\nu_1) \leq C(\nu_2)$ by a trivial coupling. This means that the time constant $C(\nu)$ is a non-decreasing function for the stochastic (partial) order. The more delicate question is whether it is \emph{strictly} increasing. Our first result provides a positive answer and generalizes a previous monotonicity result stated in \cite{https://doi.org/10.48550/arxiv.2006.01727} concerning the function $m \mapsto C((1-p)\delta_m + p\delta_M)$. 

Let us denote by $\mathcal{M}_1$ the set of all probability distributions on $(\{-\infty\}\cup\R, \mathcal{B}(\{-\infty\}\cup\R))$, where $\mathcal{B}(\{-\infty\}\cup\R)$ is the Borel algebra for the usual topology on $\{-\infty\}\cup\R$. For every Borel set $A \in \mathcal{B}(\{-\infty\} \cup \R)$, let us denote by $\Mof{A}$ the set of probability distributions $\nu \in \mathcal{M}_1$ such that $\nu(A) = 1$. 
\begin{theorem}\label{theorem:strictIncreasing}
For all $M>0$, $ C(\nu)$ is strictly increasing for the stochastic order on the set of probability distributions $\nu \in \Mof{[-\infty, M]}$  
such that $\nu(\{M\})>0$ .
\end{theorem}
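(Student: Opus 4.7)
The plan is to construct a monotone coupling of the two edge-weight families $X^{(1)},X^{(2)}$ and to prove that $\E[W_n^{(2)}]-\E[W_n^{(1)}]\geq cn$ for some $c>0$ and all large $n$. Since $\E[W_n(\nu)]/n\to C(\nu)$ by Kingman's theorem (the limit being finite because the support lies in $[-\infty,M]$), this immediately yields $C(\nu_2)-C(\nu_1)\geq c>0$.

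Two ingredients are needed. First, since $\nu_1$ is stochastically dominated by $\nu_2$ and $\nu_1\neq\nu_2$, there exist $a<a'\leq M$ and $\eta>0$ with $\nu_1((-\infty,a])-\nu_2((-\infty,a])\geq\eta$ and $\nu_2([a',+\infty))-\nu_1([a',+\infty))\geq\eta$; from this one builds an i.i.d.\ coupling $(X_e^{(1)},X_e^{(2)})_e$ with $X_e^{(1)}\leq X_e^{(2)}$ almost surely and with $\Proba(X_e^{(1)}\leq a,\ X_e^{(2)}\geq a')\geq\eta$. Second, the positive atom $\nu_1(\{M\})$ combined with $M>0$ gives $C(\nu_1)\geq M\cdot C((1-\nu_1(\{M\}))\delta_{-\infty}+\nu_1(\{M\})\delta_1)>0$ by restricting paths to the sub-DAG of $M$-weighted edges (a Barak-Erdős graph). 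Consequently any heaviest $\nu_1$-path $\pi_1^*$ from $0$ to $n$ contains at least $C(\nu_1)/M\cdot n\,(1-o(1))$ edges and at most $n-1$ edges almost surely, and in particular leaves a positive density of ``gap'' vertices unvisited whenever $C(\nu_1)/M<1$.

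To derive the linear gain I plan to use two complementary improvement schemes, at least one of which must deliver $\Omega(n)$ weight. \emph{Direct upgrade:} the trivial bound $W_n^{(2)}\geq w_{\pi_1^*}^{(2)}=W_n^{(1)}+\sum_{e\in\pi_1^*}(X_e^{(2)}-X_e^{(1)})$ picks up at least $a'-a$ on each edge of $\pi_1^*$ lying in the upgrade event $U_e:=\{X_e^{(1)}\leq a,\ X_e^{(2)}\geq a'\}$, so when $\pi_1^*$ uses a positive fraction of edges with $X^{(1)}\leq a$, the coupling yields the linear gain directly. \emph{Vertex insertion:} otherwise $\pi_1^*$ consists mostly of edges with large $\nu_1$-weight, and the $\Theta(n)$ gap vertices provide room for a local improvement; with positive probability each path edge $(v_{i-1},v_i)$ admits a gap vertex $m$ such that $X_{v_{i-1},m}^{(2)}=X_{m,v_i}^{(2)}=M$, and inserting $m$ produces a $\nu_2$-path exceeding $W_n^{(1)}$ by at least $2M-X_{v_{i-1},v_i}^{(2)}\geq M$ on the corresponding segment.

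The main obstacle is to turn either scheme into a linear lower bound: one must control the correlation between $\pi_1^*$ and the coupling variables, and ensure that disjoint improvement moves at different locations compound additively rather than cancel. For this I plan to invoke the MGS / renovation framework from \cite{https://doi.org/10.48550/arxiv.2110.01559}, which yields a stationary and ergodic description of the last passage percolation process: between successive renovation events, a bounded-size improvement move can be exhibited with probability bounded away from zero independently of the past, and summing over the $\Theta(n)$ renovation cycles delivers the required linear gain.
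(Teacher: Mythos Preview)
Your proposal identifies the right difficulty but does not resolve it. The ``direct upgrade'' scheme suffers from a selection bias you have not addressed: the heaviest $\nu_1$-path $\pi_1^*$ is chosen precisely because its edges carry large $X^{(1)}$-weights, and this is \emph{negatively} correlated with the upgrade event $U_e=\{X_e^{(1)}\leq a\}$. There is no reason a positive fraction of edges of $\pi_1^*$ should satisfy $X_e^{(1)}\leq a$; indeed, if $\nu_1=(1-p)\delta_a+p\delta_M$ with $a$ very negative, the heaviest path will essentially use only $M$-edges. Your ``vertex insertion'' scheme has the same problem: the existence and location of gap vertices, and the weights of the edges incident to them, are correlated with $\pi_1^*$ in a way you do not control. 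The dichotomy between the two regimes is also not made precise (it is stated as a property of individual realisations, not of the measures), so neither scheme is shown to apply on a set of positive density. At the end you defer to the MGS/renovation machinery without saying what event you would exhibit or why it has positive probability; that is exactly the heart of the matter.

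The paper's proof bypasses the correlation issue entirely by never conditioning on the random path $\pi_1^*$. Instead it uses the formula $C(\nu_j)=\gamma_{1-p_1}\,\E[\text{front increment of the MGS between consecutive renovation times}]$ (Proposition~\ref{proposition:formulaErgodicMGS}), so that it suffices to find a single triangular word $\beta\in\mathcal{T}_m$ on which the conditional expectation of the increment is strictly larger for $\nu_2$ than for $\nu_1$. The word is explicit: $\beta^{(k)}=(1,\dots,1,k+1,k,\dots,2)$ of length $2k$, and the event $A_{k,t}$ prescribes the values of finitely many weight variables so that (i) both MGS build a staircase of $k$ particles at spacing $M$, then (ii) at step $k{+}1$ the $\nu_2$-process lands strictly ahead of the $\nu_1$-process using a single coordinate where $X^{(1)}<X^{(2)}$, and (iii) both processes then copy that gap for the remaining $k{-}1$ steps. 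The event is defined purely in terms of the weight array, not of any optimiser, so its probability factorises and is shown to be positive by an elementary (but careful) choice of the threshold $t$ and the integer $k$. This is the missing idea in your sketch: rather than trying to improve an unknown optimal path, one manufactures a finite configuration of weights on which the improvement is forced and measurable.
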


\begin{remark}
\begin{itemize}
\item The theorem above tells us something about the geometry of the heaviest paths: because the time constant is strictly monotonic, edge weights with arbitrarily large negative values in the support of $\nu$ may contribute to a heaviest path. Thus, avoiding all the edges with weights below a certain cutoff is not the optimal strategy, as one could have naively expected.
\item Let also remark that the assumption that  $\nu(\{M\})>0$ with $M>0$ cannot be dropped without further assumptions because the strict monotonicity does not hold anymore for probability distributions  $\nu \in \Mof{[-\infty, 0]}$.  
Indeed, for any such distribution different from $\delta_{-\infty}$, it is easy to check that $C(\nu) = 0$ (because in that case, every step is a penalty so the best strategy is to go from $0$ to $n$ with only a finite number of steps that does not increase to infinity with $n$), c.f. \cite{https://doi.org/10.48550/arxiv.2110.01559} for details. 
\end{itemize}
\end{remark}

Denote by $\mathcal{M}_1^b = \Set{\nu \in \mathcal{M}_1}{M_{\nu} < \infty}$ the set of all probability distributions in $ \mathcal{M}_1$ with finite essential supremum and by $d_{LP}$ the L\'evy–Prokhorov metric on $\mathcal{M}_1$. It is known that this metric corresponds to the topology of weak convergence of measures when working on a separable space, which is the case here. For $\nu_1,\nu_2 \in \mathcal{M}_1$, we set 
\begin{align}\label{eq:myMetric}
    d(\nu_1,\nu_2)= \max(d_{LP}(\nu_1,\nu_2), |M_{\nu_1}-M_{\nu_2}|).
\end{align}
It is straightforward that $d$ defines a metric on $\mathcal{M}_1$. 
\begin{theorem}\label{theorem:continuity}
The map $ \nu \in \mathcal{M}_1^b \mapsto C(\nu) $ is continuous for the metric $d$ defined in \eqref{eq:myMetric}.
\end{theorem}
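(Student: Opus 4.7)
The plan is to prove continuity by a bracketing argument that reduces the problem to controlling $C$ under monotone perturbations, followed by an appeal to the coupling with the max-growth system of \cite{https://doi.org/10.48550/arxiv.2110.01559} for the remaining comparison.

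Fix $\nu\in\mathcal{M}_1^b$ and a sequence $(\nu_k)\subset\mathcal{M}_1^b$ with $d(\nu_k,\nu)\to 0$. Introduce an i.i.d.\ family $(U_{i,j})_{0\le i<j}$ of uniforms on $(0,1)$ and couple the edge weights via the quantile transforms $X_{i,j}=F_\nu^{-1}(U_{i,j})$ and $X_{i,j}^{(k)}=F_{\nu_k}^{-1}(U_{i,j})$. The condition $d(\nu_k,\nu)\to 0$ implies $F_{\nu_k}\to F_\nu$ at all continuity points and $|M_{\nu_k}-M_\nu|\to 0$, hence $X_{i,j}^{(k)}\to X_{i,j}$ almost surely for each edge. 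Denote by $\sigma_k^+$ and $\sigma_k^-$ the laws of $X_{i,j}\vee X_{i,j}^{(k)}$ and $X_{i,j}\wedge X_{i,j}^{(k)}$; they satisfy $\sigma_k^-\preceq\nu,\nu_k\preceq\sigma_k^+$ in stochastic order and $d(\sigma_k^\pm,\nu)\to 0$. The (easy direction of the) monotonicity of $C$ yields
\[
C(\sigma_k^-)\le\min(C(\nu),C(\nu_k))\le\max(C(\nu),C(\nu_k))\le C(\sigma_k^+),
\]
so it suffices to show $C(\sigma_k^+)-C(\sigma_k^-)\to 0$. The virtue of the sandwich is that the coupled edge-weight families $Y_{i,j}=X_{i,j}\vee X_{i,j}^{(k)}$ and $Z_{i,j}=X_{i,j}\wedge X_{i,j}^{(k)}$ satisfy the pointwise identity $Y_{i,j}-Z_{i,j}=|X_{i,j}-X_{i,j}^{(k)}|\to 0$ almost surely.

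Before the final comparison, I would reduce to distributions of uniformly bounded support. Let $\nu^{(L)}$ be the push-forward of $\nu$ under $x\mapsto\max(x,-L)$, so $\nu\preceq\nu^{(L)}\in\Mof{[-L,M_\nu]}$. An LPP-level argument---showing that $\nu^{(L)}$-heaviest paths avoid the edges taking the value $-L$ with probability approaching one, since those edges contribute very negatively---yields $C(\nu^{(L)})\downarrow C(\nu)$ as $L\to\infty$. Applying this reduction to $\sigma_k^\pm$ and $\nu$ simultaneously allows us to assume all relevant distributions are supported in a common bounded interval, whence $Y-Z$ is uniformly bounded and $\E[Y-Z]\to 0$. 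The remaining comparison of $C(\sigma_k^+)$ and $C(\sigma_k^-)$ then goes through the LPP--MGS coupling of \cite{https://doi.org/10.48550/arxiv.2110.01559}: $C(\sigma_k^\pm)$ is the asymptotic speed of the rightmost particle in the MGS driven by $\sigma_k^\pm$, and under the quantile coupling of the edges the two MGS evolutions can be coupled to share renovation events. The renovation-theoretic tail estimates from \cite{https://doi.org/10.48550/arxiv.2110.01559}, uniform along $d$-convergent sequences with common bounded support, then reduce the speed comparison to the expectation of an $L^1$-vanishing functional.

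The main obstacle is precisely this last step. The naive per-path bound $W_n(Y)-W_n(Z)\le\sum_{e\in\pi^*(Y)}(Y_e-Z_e)$ cannot be used in isolation, since dominating the path sum by a sum over all $\binom{n+1}{2}$ edges is quadratic in $n$, while $\pi^*(Y)$ is chosen to maximize $\sum Y_e$ and may therefore concentrate on edges where $Y-Z$ is anomalously large. Breaking this adversarial dependence between the path choice and the per-edge discrepancies requires the regenerative structure of the MGS; proving that the renovation times and their displacements remain tight uniformly along $(\nu_k)$ as $\nu_k\to\nu$ in $d$ will be the technical crux of the proof.
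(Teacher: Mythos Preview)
Your proposal is a strategy outline rather than a proof: you name the main obstacle in the final paragraph and leave it open. Two steps are genuinely unfilled. The truncation claim $C(\nu^{(L)})\downarrow C(\nu)$ is asserted but not argued; when $\nu(\{-\infty\})>0$ the $\nu^{(L)}$-optimal path may traverse edges that are absent under $\nu$, so the ``heaviest paths avoid $-L$-edges'' heuristic needs real work, and in any case $d(\nu^{(L)},\nu)\to 0$, so the claim is itself a special case of the theorem you are proving. More seriously, the appeal to ``renovation-theoretic tail estimates \ldots\ uniform along $d$-convergent sequences'' is precisely the crux, and nothing in the cited reference supplies such uniformity off the shelf. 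Note also that $\sigma_k^\pm$ need have no atom at their essential suprema, and $M_{\sigma_k^+}=\max(M_\nu,M_{\nu_k})\neq\min(M_\nu,M_{\nu_k})=M_{\sigma_k^-}$ in general, so the triangular-word renovation events of this paper are not even available for them; you would have to use the more general construction of \cite{https://doi.org/10.48550/arxiv.2110.01559} and prove a new uniform tightness statement for it. That is a substantial project, not a citation.

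The paper's route is quite different and sidesteps both issues. It first collapses the mass of $\nu$ on $[M_\nu-\varepsilon,M_\nu]$ onto the single point $M_\nu$, obtaining $\nu^{(\varepsilon)}=(1-p^{(\varepsilon)})\mu^{(\varepsilon)}+p^{(\varepsilon)}\delta_{M_\nu}$; an elementary pathwise coupling gives $|C(\nu)-C(\nu^{(\varepsilon)})|\le\varepsilon$ (and similarly for $\nu_n$) with no renovation theory at all. This manufactures an atom of mass $p^{(\varepsilon)}>0$ at a \emph{common} top $M_\nu$, so all the approximating measures sit in the class where Proposition~\ref{proposition:formuleMGS} applies. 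Continuity on that class then splits cleanly into two pieces: $\mu\mapsto C((1-p)\mu+p\delta_{M_\nu})$ is continuous for fixed $p$ by dominated convergence in the series formula (Lemma~\ref{lemma:stepContinuity1}), and $p\mapsto C((1-p)\mu+p\delta_{M_\nu})$ is Lipschitz with a constant independent of $\mu$ by Theorem~\ref{theorem:analyticCpmu} together with the exponential tail of $\Tpast$ (Lemma~\ref{lemma:stepContinuity2}). The structural point your approach lacks is that in this decomposition the renovation time $\Tpast$ depends only on $p$ and not on $\mu$, so the uniformity you identify as ``the technical crux'' comes for free.
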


\begin{remark}
One may notice that $\nu \mapsto C(\nu)$ is not continuous on  $\mathcal{M}_1$ for the L\'evy-Prokhorov metric. With $\nu_n = (1-\varepsilon_n)\delta_1 +\varepsilon_n\delta_n $ and $\nu=\delta_1$, $\nu_n$ converges to $\nu$ for the L\'evy-Prokhorov metric when $\varepsilon_n$ tends to $0$ as $n$ tends to infinity. 
However, with $C_0(p):= C((1-p)\delta_0 + p\delta_1)$, $C_0^{-1}$ its inverse and $\varepsilon_n=(C_0^{-1}(2n^{-1}))$,
one can show that $\liminf_{n \rightarrow \infty}C(\nu_n) \geq 2$ using the rescaling property \eqref{property:rescalingProperty}, the monotonicity of the time constant and the fact that $C_0$ is continuous and takes value $0$ at $p=0$. Notice that the inverse of $C_0$ exists since $C_0$ is increasing by Theorem \ref{theorem:strictIncreasing}. 
\end{remark}

\begin{remark}
The fact that the essential supremum is an atom for the measures 
considered is crucial for our construction. 
This is why we consider the metric $d$ instead of the L\'evy-Prokhorov metric. It might be possible to prove the continuity for the L\'evy-Prokhorov metric of the map $\nu \mapsto C(\nu)$ on $\Set{\nu \in \mathcal{M}_1}{ C(\nu) \leq M}$ for any fixed $M>0$, but this might require  a different construction. 
\end{remark}

The following technical result is a key ingredient for proving Theorem \ref{theorem:continuity}. 

\begin{theorem}\label{theorem:analyticCpmu}
For all $M \in\R^*_+$ and $\mu \in \Mof{[-\infty,M)}$, 
the map $p \mapsto C((1-p)\mu + p\delta_M)$ is analytic on $(0,1]$.
\end{theorem}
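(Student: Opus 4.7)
I propose to extend the renovation-theoretic strategy of Mallein and Ramassamy \cite{MR4010962, MaRa2016} (treating the Barak--Erd\H{o}s case $\mu = \delta_{-\infty}$) to a general $\mu \in \Mof{[-\infty,M)}$, by going through the max growth system (MGS) coupling from \cite{https://doi.org/10.48550/arxiv.2110.01559}. The first step is to realize the edge weights as
$$X_{i,j} = \xi_{i,j}\, M + (1-\xi_{i,j})\, Y_{i,j},$$
with $(\xi_{i,j})$ i.i.d.\ Bernoulli$(p)$ independent of $(Y_{i,j})$ i.i.d.\ with law $\mu$. Then $X_{i,j} \sim \nu_p := (1-p)\mu + p\delta_M$ and the entire $p$-dependence is carried by the Bernoulli sequence, while the $Y_{i,j}$'s play the role of a fixed background of randomness.

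Next, via the MGS coupling, $C(\nu_p)$ coincides with the asymptotic speed of the rightmost MGS particle. Renovation theory, as developed in \cite{https://doi.org/10.48550/arxiv.2110.01559}, should provide a renovation time $T$ such that on the renovation event, the MGS configuration is a measurable function of only a finite window of weights and is independent of any earlier state. This would yield a stationary representation of the form $C(\nu_p) = \E[\Delta]/\E[T]$, where $\Delta$ and $T$ denote the displacement and the length of a renovation cycle. Conditionally on the $Y_{i,j}$'s, each renovation pattern involves only finitely many $\xi_{i,j}$'s and therefore contributes a polynomial in $p$ weighted by an integral against $\mu$. Summing over patterns and integrating out the $Y_{i,j}$'s should give an expansion
$$C(\nu_p) = \sum_{n \geq 0} a_n(\mu,M)\, P_n(p),$$
where $P_n$ is a polynomial in $p$. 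An exponential tail bound of the form $\Proba(T > n) \leq C\alpha^n$, uniform in $p$ in a complex neighborhood of any fixed $p_0 \in (0,1]$, would then make the series absolutely convergent and analytic on that neighborhood.

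The main obstacle will be the construction of a renovation event that is simultaneously (i) determined by a finite window of the $(\xi_{i,j}, Y_{i,j})$'s, (ii) factorizable as a clean polynomial in $p$, and (iii) triggered with probability bounded below uniformly on compact subsets of $(0,1]$. A natural candidate is to require $\xi_{i,j} = 1$ on all edges inside a sufficiently long block of consecutive vertices: since $\mu$ has essential supremum strictly below $M$, the $M$-weighted edges in such a block overwhelm any heaviest path passing through it, so the MGS configuration after the block depends only on the weights inside it. The uniform-in-$p$ estimates inevitably degrade as $p \downarrow 0$, which is consistent with the statement being restricted to $(0,1]$ rather than $[0,1]$.
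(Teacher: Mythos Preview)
Your overall strategy---separate the Bernoulli$(p)$ layer from the $\mu$-background, find a renovation event depending only on the Bernoulli layer, obtain a $p$-polynomial expansion, and control it by an exponential tail bound uniform on compact subsets of $(0,1]$---is exactly the route the paper takes. The gap is in your proposed renovation event. Requiring $\xi_{i,j}=1$ on every edge inside a block $[a,a+L]$ does \emph{not} decouple the MGS future from the past: for any $m>a+L$, the new particle position $\max_{j\le m-1}(W_j+X_{j,m})$ still involves edges $X_{j,m}$ with $j<a$, which lie outside the block and are unconstrained. Worse, even inside the block the positions $W_{a+1},\dots,W_{a+L}$ need not be the top $L$ particles: if the pre-block front $\lambda_1^{(a-1)}=\max_{j<a}W_j$ is much larger than $W_a$ (which can happen whenever all edges $(j,a)$ with $j<a$ carry very negative $\mu$-weights), then $W_{a+1}\ge W_a+M$ need not exceed $\lambda_1^{(a-1)}$, and the past keeps governing the dynamics. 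No fixed block length $L$ repairs this when $\mu$ is unbounded below.

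The paper replaces your block event by an \emph{infinite} condition, the ``triangular word'' event: $n\in\mathcal{R}$ iff for every $i\ge1$ one has $\xi_{n+i-1}\le i$, where $\xi_m$ is the first index $j$ with $X^{(m)}_j=M$ (equivalently, there is a path of $M$-edges from $n$ to every later vertex). Lemma~\ref{lemma:keyLemmaTr} shows this really is a renovation event. The event is infinite, but what matters for the expansion is the backward length $T_{\text{past}}$ until a triangular word is seen; this is almost surely finite and, crucially, has exponential tails. The paper obtains this by observing that the number of ``unmatched'' indices evolves as a subcritical Galton--Watson process with Bernoulli$(1-p)$ offspring and Bernoulli$(1-p)$ immigration, and then invoking Zubkov's theorem (Theorem~\ref{theorem:Zubkov}) to get $\Proba(T_{\text{past}}>k)\sim c\,r_p^{-k}$ with $r_p>1$. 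This replaces your hoped-for uniform exponential bound and yields absolute convergence of the series in Proposition~\ref{proposition:formuleMGS} on a complex disk around each $p_0\in(0,1]$. A secondary difference: rather than the renewal ratio $\E[\Delta]/\E[T]$, the paper builds a time-stationary MGS and gets $C(\nu_p)=\E[\widetilde{\lambda}^{(1)}_1]$ directly as a single series over minimal triangular words, which avoids having to control a denominator separately.
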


This result is also interesting in itself since it shows that it would suffice to know the coefficients of the Taylor expansion in $p$ of the time constant at some $p_0 \in (0,1]$ to obtain $C((1-p)\mu + p\delta_M)$ for all $p\in(0,1]$. The scope of Theorem \ref{theorem:analyticCpmu} is rather general since every distribution with finite essential support can be decomposed as $(1-p)\mu+p\delta_M$ for some $M\in \{-\infty\}\cup \R$,  $p\in [0,1]$, and $\mu\in\Mof{[-\infty,M)}.$ 

In addition, we give a lower bound for the radius of convergence of $ p \mapsto C((1-p)\mu + p\delta_M)$ at $p=1$ in Remark \ref{remark:radius}, which improves the lower bound given in \cite{MaRa2016} in the case where $\mu = \delta_{-\infty}$.

In \cite{MR4010962}, Theorem \ref{theorem:analyticCpmu} was proved in the specific case where $\mu = \delta_{-\infty}$, which corresponds to the Barak-{E}rd\H{o}s case. Our proof of Theorem \ref{theorem:analyticCpmu} is an extension of the analyticity proof of \cite{MR4010962}. It relies on 
a formula for $C((1-p)\mu + p \delta_M)$ as a sum, over an infinite class of words, of polynomials 
in $p$ whose coefficients depend on $\mu$ (Proposition \ref{proposition:formuleMGS}).

For measures with finite support, we obtain the following result, which is a generalization in a different direction of the aforementioned analyticity result of \cite{MR4010962}: 
\begin{theorem}\label{theorem:analyticityFiniteSupport}
For all $N\in \N$ and $a_1  > a_2 > \dots > a_N \geq - \infty$, the map $$(p_2, \dots, p_N) \mapsto C\left(\sum_{i=1}^N p_i \delta_{a_i} \right)$$ is analytic on $ \Set{(p_2, \dots, p_N) \in [0,1]^N}{0 \leq p_2 + \dots + p_N < 1}$, where $p_1 = 1-(p_2+\dots + p_N)$.
\end{theorem}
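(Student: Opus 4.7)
The plan is to adapt the word-sum formula of Proposition \ref{proposition:formuleMGS}, which underlies Theorem \ref{theorem:analyticCpmu}, to extract joint analyticity in all atom probabilities at once. Set $M = a_1$, and for $\mathbf{p}=(p_2,\dots,p_N)$ with $p_1 = 1-\sum_{i\geq 2}p_i \in (0,1]$, define
$$\mu_{\mathbf{p}} = \frac{1}{1-p_1}\sum_{i=2}^N p_i \delta_{a_i} \in \Mof{[-\infty, a_1)},$$
so that $\sum_{i=1}^N p_i \delta_{a_i} = (1-p_1)\mu_{\mathbf{p}} + p_1 \delta_{a_1}$. Proposition \ref{proposition:formuleMGS} then supplies a representation
$$C\!\left(\sum_{i=1}^N p_i \delta_{a_i}\right) = \sum_{w} P_w(p_1; \mu_{\mathbf{p}}),$$
where the sum ranges over an infinite combinatorial family of MGS-words $w$, and each $P_w(\,\cdot\,; \mu)$ is a polynomial in $p_1$ whose coefficients are integrals of bounded, combinatorially defined functions against tensor powers of $\mu$.

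Because $\mu_{\mathbf{p}}$ is itself finitely supported on $\{a_2,\dots,a_N\}$, the coefficients of $P_w(\,\cdot\,;\mu_{\mathbf{p}})$ become polynomial expressions in $p_i/(1-p_1)$ evaluated at the $a_i$'s. After multiplying by the $p_1$-factors of $P_w$ to clear the $(1-p_1)$-denominators, each individual term $P_w(p_1; \mu_{\mathbf{p}})$ is a genuine polynomial in $(p_2,\dots,p_N)$ via the substitution $p_1 = 1 - \sum_{i\geq 2} p_i$, hence real-analytic on $\R^{N-1}$. So termwise analyticity is automatic.

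To pass from termwise to joint analyticity of the sum, I would revisit the convergence estimate in the proof of Theorem \ref{theorem:analyticCpmu}. The bound on $|P_w(p_1; \mu)|$ there should be uniform over $\mu \in \Mof{[-\infty,a_1)}$, since the $\mu$-dependent coefficients arise as integrals of uniformly bounded functions; only the combinatorics of $w$ and the values of $p_1$ and $a_1$ enter. Uniformity yields normal convergence of $\sum_w P_w(p_1;\mu_{\mathbf{p}})$ on any compact subset of $\{\mathbf{p} \in [0,1]^{N-1}: \sum_{i\geq 2} p_i < 1\}$, and a normally convergent series of real-analytic functions is real-analytic. Boundary points where some $p_i=0$ for $i \geq 2$ correspond to distributions with strictly fewer atoms, so analyticity there follows by induction on $N$ combined with the continuity of the construction, which matches the interior analytic function on a neighborhood.

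The main obstacle is verifying that the convergence bounds in the proof of Theorem \ref{theorem:analyticCpmu} really are uniform in $\mu$: if the proof as written tracks only a single fixed $\mu$, one must refine the estimates, being careful that the combinatorial weights of the words can be controlled by bounds depending only on $p_1$ and $a_1$ (not on the finer structure of $\mu$). A secondary, easier, obstacle is to confirm that clearing the $(1-p_1)$-denominators does not introduce spurious singularities, i.e.\ that the $p_1$-power of each monomial coefficient in $P_w$ is at least as large as the power of $1/(1-p_1)$ it multiplies — this should follow directly from the MGS construction underlying Proposition \ref{proposition:formuleMGS}.
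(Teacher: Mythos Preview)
Your outline has the right formula and the right estimates in view, but the sentence ``a normally convergent series of real-analytic functions is real-analytic'' is false as stated, and the argument hinges on it. Normal (uniform-on-compacta) convergence of real-analytic functions on a real domain does not force the limit to be real-analytic: the Weierstrass nowhere-differentiable function $\sum 2^{-n}\cos(4^n x)$ is a uniform limit of analytic functions. What is true, and what you actually need, is that a locally uniform limit of \emph{holomorphic} functions on an open set of $\CC^{N-1}$ is holomorphic. Since your terms are polynomials in $(p_2,\dots,p_N)$ they do extend to entire functions, so your approach can be rescued---but you must prove normal convergence on a complex polydisc around each real point, not merely on real compacts. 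The bound you invoke from Theorem \ref{theorem:analyticCpmu} does complexify: if you replace each $p_i$ by $p_i+z_i$, the triangle inequality and the multinomial identity $\sum_{x\in E_\beta}\prod_i (p_i+|z_i|)^{c_{i,x}} = (1-p_1+\sum_i|z_i|)^{H(\beta)}$ reduce you to exactly the exponential-moment bound on $\Tpast$ with $p$ replaced by $p_1-\sum_i|z_i|$, finite for $\sum_i|z_i|$ small. You did not say this, and without it the argument is incomplete.

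The paper sidesteps the issue entirely by invoking Hartogs' theorem on separate holomorphy: it suffices to show that for each fixed $k$, the map $p_k\mapsto C(\cdots)$ is analytic with the other $p_j$ frozen. This reduces the multivariable problem to $N-1$ one-variable problems, each of which is handled by complexifying a single $p_k$ and bounding the resulting sum by $\E_{p_1-|z|,\mu}\bigl[\bigl(\tfrac{p_1+|z|}{p_1-|z|}\bigr)^{\Tpast}\bigr]$, exactly the estimate from Theorem \ref{theorem:analyticCpmu}. Your uniformity-in-$\mu$ observation is correct and is precisely why this works. Two further remarks: your ``secondary obstacle'' about clearing $(1-p_1)$-denominators is not an obstacle---once you expand $\mu_{\mathbf p}^{\otimes H(\beta)}$, the factor $(1-p_1)^{H(\beta)}$ in Proposition \ref{proposition:formuleMGS} cancels exactly against the normalizations, yielding the monomial form $p_1^{|\beta|}\prod_{i\geq 2}p_i^{c_{i,x}}$; and your boundary-induction step for points with some $p_i=0$ is unnecessary, since the series formula and its complex-neighborhood convergence hold there without modification.
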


\begin{remark}
In general, we cannot extend the analyticity of the function considered in Theorem \ref{theorem:analyticityFiniteSupport} to the set $ \Set{(p_2, \dots, p_N) \in [0,1]^N}{0 \leq p_2 + \dots + p_N \leq 1}$. For instance, $p \mapsto C((1-p)\delta_{-\infty}+p\delta_1)$ is not analytic at $0$ cf. \cite{MR4010962, MaRa2016}.
\end{remark}

Until now, we have considered general measures, possibly taking negative values. 
The model is easier to study when the support of $\nu$ is included in $\R^*_+$. For measures $\nu$ of the form $(1-p)\delta_m + p\delta_M$ with $0 < m \leq M $, the study of the time constant can be reduced to the study of a Markov chain on a finite state space, and we have the following result: 
\begin{theorem}\label{theorem:rationality2atoms}
For all $0<m \leq M$, $p \mapsto C((1-p)\delta_m + p\delta_M)$ is a rational function on $[0,1]$.
\end{theorem}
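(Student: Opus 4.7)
The plan is to couple the last passage percolation process with a Markov chain $(S_n)_{n \ge 0}$ on a finite state space and then compute the time constant as an expectation under the (rational-in-$p$) stationary distribution of this chain. Set $Y_i^{(n)} = W_n - W_i$ for $i \le n$. The recursion $W_{n+1} = W_n + \max_{i \le n}(X_{i, n+1} - Y_i^{(n)})$ makes clear that only indices $i$ with $Y_i^{(n)} \le M - m$ can attain the maximum. Since every increment $\Delta_k := W_{k+1} - W_k$ is at least $m > 0$, there are at most $\lfloor (M-m)/m \rfloor + 1$ such candidate indices at any time, so the state $S_n$ encoding the tuple of $Y$-values at candidate indices has bounded length.

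The central technical step is to show that the set $\mathcal{Y} \subseteq [0, M-m]$ of $Y$-values that can ever appear at candidate indices is \emph{finite}. Let $\mathcal{D} \subseteq [m,M]$ denote the set of $\Delta$-values that the chain can produce. The two sets satisfy the bootstrap inclusions
\begin{equation*}
\mathcal{D} \subseteq \{m, M\} \cup \{M - y : y \in \mathcal{Y},\, y \ge m\}, \qquad
\mathcal{Y} \subseteq \{0\} \cup \{y + \delta : y \in \mathcal{Y},\, \delta \in \mathcal{D},\, y + \delta \le M - m\}.
\end{equation*}
Starting from $\mathcal{Y}_0 = \{0\}$ and $\mathcal{D}_0 = \{m, M\}$, I would iterate these rules and argue that the iteration saturates after finitely many steps: each admissible new element of $\mathcal{Y}$ is a bounded sum of at most $\lfloor M/m \rfloor$ elements of $\mathcal{D}$ staying below $M-m$, and each admissible element of $\mathcal{D}$ is $m$, $M$, or $M - y$ for some $y \in \mathcal{Y}$. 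Concluding that the combined fixed point $(\mathcal{Y}, \mathcal{D})$ is finite is the main technical work; with it, the state space $\mathcal{S}$ of $(S_n)_{n \ge 0}$ is finite.

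Given finiteness of $\mathcal{S}$, the transition $S_n \to S_{n+1}$ depends only on the finitely many i.i.d.\ weights $X_{i, n+1}$ at candidate indices, each equal to $M$ with probability $p$ and to $m$ with probability $1-p$; hence every transition probability $P_{s, s'}(p)$ is a polynomial in $p$. For $p \in (0, 1)$, irreducibility of the chain can be checked directly by exhibiting explicit weight patterns connecting any two states, so a unique stationary distribution $\pi(p)$ exists. Applying Cramer's rule to the linear system $\pi(p)(P(p) - I) = 0$ together with the normalisation $\sum_s \pi_s(p) = 1$ shows that each component $\pi_s(p)$ is a rational function of $p$. The time constant is the expected drift under stationarity,
\begin{equation*}
C\bigl((1-p)\delta_m + p\delta_M\bigr) = \sum_{s \in \mathcal{S}} \pi_s(p)\, \E_s[\Delta],
\end{equation*}
with $\E_s[\Delta]$ a polynomial in $p$ for each $s$; hence $C$ is rational on $(0,1)$, and Theorem~\ref{theorem:continuity} extends this rational expression to all of $[0,1]$.

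The main obstacle will be establishing finiteness of $\mathcal{Y}$ rigorously: when $M/m$ is irrational the $Y$-values need not lie in any pre-specified lattice, so the closure argument must be structured so that the self-referential bootstrap between $\mathcal{Y}$ and $\mathcal{D}$ terminates in finitely many steps despite $\Q m + \Q M$ being dense in $\R$. A secondary subtlety is handling the boundary values $p \in \{0, 1\}$, where the chain may lose irreducibility; this is resolved via the already-established continuity of $p \mapsto C((1-p)\delta_m + p\delta_M)$.
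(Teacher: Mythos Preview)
Your approach coincides with the paper's for the special case $m=M/k$ with integer $k$: both arguments reduce to a finite-state Markov chain with polynomial-in-$p$ transition probabilities, and rationality follows from the stationary distribution via Cramer's rule. For general $m$, however, the paper takes an entirely different route, and the gap you flag is real.

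The problem is exactly the step you identify as ``the main technical work'': termination of the bootstrap between $\mathcal{Y}$ and $\mathcal{D}$ when $M/m$ is irrational. Each $Y\in\mathcal{Y}$ is a sum of at most $K:=\lceil M/m\rceil-1$ increments, but each increment $M-y'$ refers back to an earlier $y'\in\mathcal{Y}$, which itself unfolds into further increments, and nothing bounds the depth of this recursion. Equivalently, every reachable $Y$ lies in $\Z m+\Z M$ and (for irrational $M/m$) has a unique representation $Y=am+bM$, but the bootstrap gives no a priori bound on $|a|+|b|$; the constraint $0\le am+bM<M-m$ is a single linear inequality and does not by itself confine $(a,b)$ to a finite set. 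So the sentence ``argue that the iteration saturates after finitely many steps'' is the entire content of the theorem in the irrational case, and it is not supplied.

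The paper sidesteps this question completely. It proves the finite-Markov-chain statement only for $m=M/k$ (Proposition~\ref{proposition:Capos}), where all particle positions lie in the lattice $(M/k)\Z$ and finiteness of the state space is immediate. For $m\in(M/(k+1),M/k)$ it instead invokes the skeleton-point formula (Proposition~\ref{proposition:FKP4}) together with the classification of critical points from~\cite{https://doi.org/10.48550/arxiv.2006.01727}: Lemma~\ref{coro:FKP-NNbar} shows that the numbers $N_x,\overline{N}_x$ of weight-$M$ and weight-$m$ edges in an optimal path between consecutive skeleton points are constant on each open interval between critical points, whence $m\mapsto C((1-p)\delta_m+p\delta_M)$ is affine on $[M/(k+1),M/k]$ and is therefore determined by its (already rational) values at the endpoints. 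This two-step structure never confronts the finiteness of your $\mathcal{Y}$. If you want to salvage the direct approach, you would effectively need to prove that the combinatorial structure of your chain depends only on the interval $(M/(k+1),M/k)$ containing $m$, which is essentially the content of Lemma~\ref{coro:FKP-NNbar} rephrased.
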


In addition, we can compute the explicit numerical expression for $C((1-p)\delta_m + p\delta_M)$ for all positive values of $m$ and $M$.  The complexity of the computation increases with $\frac{M}{m}$.
We provide in Subsections \ref{subsec:2atom-caseinvk} and \ref{subsec:2atom-FKP} these numerical expressions for $m$ in $[\frac{M}{5}, M]$.

When $m=0$, one can write the time constant as the reciprocal of the Ramanujan $\Psi$-function  \cite{dutta2020limit}. We give in this article an alternative proof of that result:
\begin{theorem}[\cite{dutta2020limit}]\label{theorem:fromDutta}
For all $M > 0$ and $p\in]0,1],$  $$C((1-p)\delta_{0} + p \delta_M)=M \left( \sum_{n=1}^{\infty} (1-p)^{\binom{n}{2}}\right)^{-1} .$$
\end{theorem}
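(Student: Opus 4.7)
The plan is to give an ergodic-theoretic proof via a tractable Markov chain, after reducing to the case $M=1$ by the rescaling property \eqref{property:rescalingProperty}. So fix $p \in (0,1]$, write $q=1-p$, and work with i.i.d.\ weights $X_{i,j} \sim (1-p)\delta_0 + p\delta_1$. The recursion
\[
W_n = \max_{0 \le k < n} (W_k + X_{k,n}), \qquad W_0 = 0,
\]
together with $X_{n-1,n} \ge 0$, shows that $W_n$ is non-decreasing in $n$, and since every $W_k \le W_{n-1}$ and every $X_{k,n} \le 1$, the increments satisfy $W_n - W_{n-1} \in \{0,1\}$. Moreover $W_n = W_{n-1}+1$ if and only if there exists $k \le n-1$ with $W_k = W_{n-1}$ and $X_{k,n}=1$.

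The key step is to introduce the \emph{plateau length}
\[
L_n := \bigl|\{\,k \le n : W_k = W_n\,\}\bigr|.
\]
Because the variables $(X_{k,n+1})_{k \le n}$ are independent of $\mathcal{F}_n = \sigma(X_{i,j}: 0 \le i < j \le n)$, exactly $L_n$ of them are attached to vertices in the current plateau, and an elementary case analysis gives that $(L_n)_{n \ge 0}$ is a Markov chain on $\N^*$ with transitions
\[
\Proba(L_{n+1} = L+1 \mid L_n = L) = q^L, \qquad \Proba(L_{n+1} = 1 \mid L_n = L) = 1 - q^L,
\]
corresponding respectively to $W_{n+1} = W_n$ and $W_{n+1} = W_n + 1$.

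Next, I would solve for the stationary distribution. The balance relation $\pi(L+1) = q^L \pi(L)$ forces $\pi(n) = \pi(1) \, q^{\binom{n}{2}}$, and normalization yields $\pi(1) = 1/Z$ with $Z = \sum_{n \ge 1} q^{\binom{n}{2}}$, which is finite for $p>0$. The chain is irreducible (every state communicates with $1$) and positive recurrent, so the ergodic theorem applies.

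Finally, $W_n/n$ is exactly the proportion of indices $k < n$ with $W_{k+1} - W_k = 1$, which by the ergodic theorem converges almost surely to
\[
\E_\pi[1 - q^L] = \pi(1)\sum_{L \ge 1}\bigl(q^{\binom{L}{2}} - q^{\binom{L+1}{2}}\bigr) = \pi(1) = \frac{1}{Z},
\]
by telescoping. This identifies $C((1-p)\delta_0 + p\delta_1) = 1/Z$; the rescaling property then yields the stated formula for general $M$. There is no real obstacle here beyond identifying the correct Markov chain: once $(L_n)$ is in hand, the stationary distribution and the ergodic limit drop out immediately. The one subtle point worth checking is the edge case $p=1$, where the chain is trivially concentrated on $\{1\}$ and both sides of the formula equal $M$.
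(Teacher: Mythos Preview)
Your proof is correct and follows essentially the same approach as the paper's. The paper phrases the argument through the MGS coupling and tracks $Y_n$, the number of particles at the front position of $\lambda^{(n)}$; under the coupling of Proposition~\ref{proposition:couplingProperty} (and since $W_n$ is non-decreasing here), this $Y_n$ is precisely your plateau length $L_n$, the transition probabilities coincide, and both proofs finish by identifying $C$ with $\pi(1)$ via the ergodic theorem.
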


\subsection*{Organization of the paper}

In Section \ref{sec:stationarity}, we recall the coupling from \cite{https://doi.org/10.48550/arxiv.2110.01559} between last passage percolation and the MGS. We construct a time-stationary version of the MGS for measures $(1-p)\mu +p\delta_M $ with $p \in (0,1]$, $M\in \R_+^*$ and $\mu \in \Mof{[- \infty , M)}$ 
(Proposition \ref{proposition:stationaryMGS}). To do so, we use different (yet of similar flavor) renovation events from those in \cite{https://doi.org/10.48550/arxiv.2110.01559} and \cite{MR4010962}. With these, we obtain a formula for $C(\nu)$ in Proposition \ref{proposition:formuleMGS} that we use in Section \ref{sec:regularity} to prove the regularity results on $\nu \mapsto C(\nu)$. 

In Section \ref{sec:regularity}, we study the regularity of $\nu \mapsto C(\nu)$. The main results proved in this section are Theorem \ref{theorem:analyticCpmu}, Theorem \ref{theorem:analyticityFiniteSupport}, Theorem \ref{theorem:continuity} and Theorem \ref{theorem:strictIncreasing}.

In Section \ref{sec:atomicMeasures}, we study the case of measures supported by two elements. Firstly, we prove Theorem \ref{theorem:rationality2atoms} using results from \cite{https://doi.org/10.48550/arxiv.2006.01727} and we give the expression of $C((1-p)\delta_m + p \delta_M)$ for values of $m$ close to $M$. In the last subsection, we give an alternative proof of Theorem \ref{theorem:fromDutta} using the stationary MGS.

\section{Stationary max growth system and a formula for \texorpdfstring{$C(\nu)$}{TEXT}}
\label{sec:stationarity}

This section is dedicated to the max growth system (MGS) which was introduced in \cite{https://doi.org/10.48550/arxiv.2110.01559}. In Subsection \ref{subsec:MGS}, we recall the max growth system.  This model was introduced by Foss, Konstantopoulos, Mallein and Ramassamy in order to perform perfect simulation of the time constant $C(\nu)$. In Subsection \ref{subsec:statMGS}, we construct a stationary version of the MGS using tools from renovation theory. This construction differs from the one in \cite{https://doi.org/10.48550/arxiv.2110.01559}. In Subsection \ref{subsec:formulaC}, we use this construction in order to obtain a new formula for $C(\nu)$ as an infinite sum of quantities depending on $\nu$.

\subsection{The max growth system}
\label{subsec:MGS}

The max growth system (MGS) is an interacting particle system in discrete time on $\{-\infty\}\cup\R$  introduced in \cite{https://doi.org/10.48550/arxiv.2110.01559}. A new particle is added to the configuration at each step according to a specific Markovian dynamics.

We consider $\mathcal{N}$ the set of atomic measures $\lambda$ on $  \{-\infty\}\cup \R $ taking values in $\Z_+$ and such that 
\[
\forall t \in \R, \lambda([t,+ \infty)) < +\infty.
\]
An element of $\mathcal{N}$ represents a configuration of particles on the real line in the following sense: if we consider $\lambda \in \mathcal{N}$ and $A \in \mathcal{B}(\R)$, $\lambda(A)$ represents the number of particles with positions in $A$ in the configuration $\lambda$. For instance, $\delta_{-1} + 2 \delta_0$ is an element of $\mathcal{N}$. 
It corresponds to the configuration with one particle at position $-1$ and two at position $0$. 

For $\lambda \in \mathcal{N}$, we set $\lambda_1 \geq \lambda_2 \geq \dots$ the positions of the particles in $\lambda$ in decreasing order (with repetitions). For example, $\lambda = \delta_{-1} + 2 \delta_0$, we have $\lambda_1 = \lambda_2 = 0 $ and $\lambda_3 = -1$. Notice that $ \lambda(\R) $ is the number of particles in the configuration $\lambda$.

We set $\mathcal{W} = (\{- \infty \}\cup\R)^{\N}$ to be the set of weights. For $x = (x_i)_{i \geq 1} \in \mathcal{W}  $ and $\lambda \in \mathcal{N}$, we set \begin{equation*}
    \mathfrak{m}(\lambda,x) = \max_{1 \leq i \leq  \lambda(\R)}(\lambda_i + x_i).
\end{equation*}
The deterministic dynamics of the MGS is defined as follows: the weights sequence $x$ applied to the configuration $\lambda$ adds a particle to $\lambda$ at position $\mathfrak{m}(\lambda,x)$. Therefore, for all $x \in \mathcal{W}$, we define the map $\Phi_x : \mathcal{N} \rightarrow \mathcal{N}$ such that for all $\lambda \in \mathcal{N}$, \begin{equation*}
\Phi_x(\lambda) = \lambda + \delta_{\mathfrak{m}(\lambda,x)}.
\end{equation*}
See Figure \ref{fig:2} for an example.

\begin{figure}[t]
\centering
\begin{minipage}{.45\textwidth}
    \centering
    \begin{tikzpicture}
  [
  middlearrow/.style 2 args={
        decoration={             
            markings, 
            mark=at position 0.5 with {\arrow[xshift=3.333pt]{triangle 45}, \node[#1] {#2};}
        },
        postaction={decorate}
    },
    particle/.style={circle, draw},
    minimum size=5pt
    ]

    \node[label=left:{$\lambda :$}] at (-2,-0.5) {};
    \draw[->,thick] (-2,-0.5) -- (2,-0.5);
    
    \draw[-,thick] (-1,-0.4) -- (-1,-0.6);
    \node (labelm1) at (-1,-0.8) {$-1$};
    
    \draw[-,thick] (0.3,-0.4) -- (0.3,-0.6);
    \node (labelm1) at (0.3,-0.8) {$0.3$};
    
    \draw[-,thick] (1,-0.4) -- (1,-0.6);
    \node (labelm1) at (1,-0.8) {$1$};

    \node[particle,text=white] (1) at (-1,0) {0};
    \node[particle,text=white] (2) at (0.3,0) {0};
    \node[particle,text=white] (3) at (0.3,0.75) {0};
    \node[particle,text=white] (4) at (1,0) {0};

\end{tikzpicture}
\end{minipage}
\begin{minipage}{.45\textwidth}
    \centering
    \begin{tikzpicture}
  [
  middlearrow/.style 2 args={
        decoration={             
            markings, 
            mark=at position 0.5 with {\arrow[xshift=3.333pt]{triangle 45}, \node[#1] {#2};}
        },
        postaction={decorate}
    },
    particle/.style={circle, draw},
    minimum size=5pt
    ]

    \node[label=left:{$\Phi_x(\lambda) :$}] at (5,-0.5) {};
    \draw[->,thick] (5,-0.5) -- (10,-0.5);
    
    \draw[-,thick] (6,-0.4) -- (6,-0.6);
    \node (labelm1) at (6,-0.8) {$-1$};
    
    \draw[-,thick] (7.3,-0.4) -- (7.3,-0.6);
    \node (labelm1) at (7.3,-0.8) {$0.3$};
    
    \draw[-,thick] (8,-0.4) -- (8,-0.6);
    \node (labelm1) at (8,-0.8) {$1$};
    
    \draw[-,thick] (9,-0.4) -- (9,-0.6);
    \node (labelm1) at (9,-0.8) {$2$};
    
    \node[particle,text=white] (1b) at (6,0) {0};
    \node[particle,text=white] (2b) at (7.3,0) {0};
    \node[particle,text=white] (3b) at (7.3,0.75) {0};
    \node[particle,text=white] (4b) at (8,0) {0};
    \node[particle,text=white] (5b) at (9,0) {0};

\end{tikzpicture}
\end{minipage}
\caption{On the left, the configuration $\lambda = \delta_{-1} + 2\delta_{0.3} + \delta_1$. On the right, the configuration $\Phi_x(\lambda)$, with $x=(0.5,-0.3,1.7,1, \dots)$. Here, $\mathfrak{m}(\lambda, x) = 0.3 + 1.7 = 2$.}
\label{fig:2}
\end{figure}

An \emph{MGS with starting configuration $\lambda^{(0)} \in \mathcal{N}$ and weight distribution $\nu$} is a process $(\lambda^{(n)})_{n \geq 0}$ taking values in $\mathcal{N}$ such that $ \lambda^{(n+1)} = \Phi_{X^{(n+1)}} (\lambda^{(n)}) $ for all $n\in\Z_+$, where $X^{(n)} = (X^{(n)}_{i})_{i\geq 1}$ and $(X_i^{(n)})_{(i,n)\in\N^2}$ are i.i.d. random variables with distribution $\nu$.

We are interested in this process because of the following coupling from \cite{https://doi.org/10.48550/arxiv.2110.01559} relating the MGS to last passage percolation on complete directed acyclic graphs. It is an extension of the infinite-bin model introduced in \cite{MR2028221}.  
\begin{proposition}[\cite{https://doi.org/10.48550/arxiv.2110.01559}]\label{proposition:couplingProperty}

We consider $W_n$ as defined in \eqref{eq:defWn} where $X_{i,j}$ has distribution $\nu$ for all $0 \leq i<j$. Then, if we set $$ \lambda^{(n)} = \sum_{j=0}^n \delta_{ W_j}$$ for all $n \geq 0$, then 
$(\lambda^{(n)})_{n \geq 0}$ is an MGS with starting configuration $\lambda^{(0)}=\delta_0$ and weight distribution $\nu$. 
In particular, $\lambda_1^{(n)}=\max\{W_j, j\in\llbracket0,n\rrbracket\}$.
\end{proposition}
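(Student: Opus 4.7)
The backbone of the argument is the dynamic programming recursion satisfied by the last passage times. Splitting any heaviest path from $0$ to $n+1$ at its next-to-last vertex $i\in\{0,\ldots,n\}$ yields
\[
W_{n+1} \;=\; \max_{0 \leq i \leq n} \bigl( W_i + X_{i,\,n+1} \bigr),
\]
and since $\lambda^{(n)}=\sum_{j=0}^n \delta_{W_j}$ manifestly satisfies $\lambda^{(n+1)}=\lambda^{(n)}+\delta_{W_{n+1}}$, the only task is to exhibit weight vectors $X^{(n+1)}\in\mathcal{W}$ such that $\mathfrak{m}(\lambda^{(n)},X^{(n+1)})=W_{n+1}$ and such that the joint law of $(X^{(n)})_{n\geq 1}$ is correct. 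The identity $\lambda^{(0)}=\delta_0$ and the ``in particular'' clause about $\lambda_1^{(n)}$ both follow immediately from $W_0=0$ and the definition of $\lambda^{(n)}$.

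For each $n\geq 0$, I would introduce the random permutation $\sigma_n:\{1,\ldots,n+1\}\to\{0,1,\ldots,n\}$ that sorts $W_0,\ldots,W_n$ in decreasing order, with ties broken by, say, smallest index first, so that $\lambda^{(n)}_i=W_{\sigma_n(i)}$ for all $i\in\{1,\ldots,n+1\}$. Then define
\[
X^{(n+1)}_i \;=\; X_{\sigma_n(i),\,n+1} \quad \text{for } 1\leq i\leq n+1,
\]
and complete the sequence for $i\geq n+2$ by fresh i.i.d.\ $\nu$-distributed variables taken from an auxiliary independent source. By construction,
\[
\mathfrak{m}(\lambda^{(n)},X^{(n+1)}) \;=\; \max_{1\leq i\leq n+1}\bigl(W_{\sigma_n(i)}+X_{\sigma_n(i),\,n+1}\bigr) \;=\; \max_{0\leq j\leq n}\bigl(W_j+X_{j,\,n+1}\bigr) \;=\; W_{n+1},
\]
so that $\Phi_{X^{(n+1)}}(\lambda^{(n)})=\lambda^{(n)}+\delta_{W_{n+1}}=\lambda^{(n+1)}$, which is exactly the MGS update rule.

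The remaining and subtlest task is to verify that $(X^{(n)})_{n\geq 1}$ is an i.i.d.\ sequence with distribution $\nu^{\otimes\N}$. For fixed $n$, the permutation $\sigma_n$ is a measurable function of the edge weights $(X_{i,j})_{0\leq i<j\leq n}$ only, hence independent of the fresh family $(X_{i,n+1})_{0\leq i\leq n}$. Since permuting an i.i.d.\ family by an independent permutation leaves its joint law unchanged, the vector $(X^{(n+1)}_1,\ldots,X^{(n+1)}_{n+1})$ is i.i.d.\ $\nu$-distributed; together with the auxiliary tail, the entire $X^{(n+1)}$ has law $\nu^{\otimes\N}$. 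Independence across different $n$ then comes for free, because the vectors $X^{(n+1)}$ are built from the disjoint families $\{X_{i,n+1}\}_{0\leq i\leq n}$ of original edge weights (plus disjoint pieces of the auxiliary randomness). The main care needed in the writeup is precisely this permutation-independence step, including fixing a measurable tie-breaking rule; everything else is a direct consequence of the dynamic programming identity.
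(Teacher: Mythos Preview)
The paper does not give its own proof of this proposition; it is quoted from \cite{https://doi.org/10.48550/arxiv.2110.01559}. Your argument is the standard one and is essentially correct, but one step is misstated and would not pass as written.

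Your justification for independence across $n$ is wrong as phrased. You claim that ``independence across different $n$ then comes for free, because the vectors $X^{(n+1)}$ are built from the disjoint families $\{X_{i,n+1}\}_{0\leq i\leq n}$''. But $X^{(n+1)}$ is \emph{not} a function of that family alone: it also depends on $\sigma_n$, which is a function of all earlier edge weights $(X_{i,j})_{0\le i<j\le n}$, and these are precisely the weights used to build $X^{(1)},\dots,X^{(n)}$. So the families used to construct the $X^{(k)}$ are not disjoint.

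The fix is the conditional-independence argument you already used for the marginal law, just carried inductively. With $\mathcal{F}_n$ the $\sigma$-algebra generated by $(X_{i,j})_{0\le i<j\le n}$ together with the auxiliary randomness used up to step $n$, one has: $X^{(1)},\dots,X^{(n)}$ are $\mathcal{F}_n$-measurable; $\sigma_n$ is $\mathcal{F}_n$-measurable; and $(X_{i,n+1})_{0\le i\le n}$ together with the fresh auxiliary tail are independent of $\mathcal{F}_n$. Hence the conditional law of $X^{(n+1)}$ given $\mathcal{F}_n$ is $\nu^{\otimes\N}$, which simultaneously gives the correct marginal and independence from $(X^{(1)},\dots,X^{(n)})$. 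Replace your last sentence by this filtration argument and the proof is complete.
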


See Figure \ref{fig:3} for an illustration of this coupling. Since $\nu$ has finite essential supremum, $\E[(X_{1,2})_+]$ is finite. 
We give in Remark \ref{remark:notDep} a self-contained proof of the fact that $(n^{-1}\max\{W_j, j\in\llbracket 0,n\rrbracket\})$ and $(n^{-1}W_n)$ have the same limit almost surely and in $L^1$. 
Then, by Proposition \ref{proposition:couplingProperty}:
\begin{align}\label{eq:convCouplingFK}
    \frac{\lambda_1^{(n)}}{n} \xrightarrow[n \rightarrow \infty]{\text{a.s, } L^1} C(\nu).
\end{align}
This result is also derived in  \cite{https://doi.org/10.48550/arxiv.2110.01559}. 

\begin{figure}[t]
\centering
\begin{minipage}{.45\textwidth}
    \centering
    \begin{tikzpicture}[
    middlearrow/.style 2 args={
        decoration={             
            markings, 
            mark=at position 0.5 with {\arrow[xshift=3.333pt]{triangle 45}, \node[#1] {#2};}
        },
        postaction={decorate}
    },
    particle/.style={circle, draw},
    minimum size=5pt,
    scale=0.9
]


    \node[circle,fill=black,inner sep=0pt,minimum size=5pt,label=below:{$0$}] (0) at (6,0) {};
    \node[circle,fill=black,inner sep=0pt,minimum size=5pt,label=below:{$1$}] (1) at (8,0) {};
    \node[circle,fill=black,inner sep=0pt,minimum size=5pt,label=below:{$2$}] (2) at (10,0) {};
    \node[circle,fill=black,inner sep=0pt,minimum size=5pt,label=below:{$3$}] (3) at (12,0) {};
    \node[circle,fill=black,inner sep=0pt,minimum size=5pt,label=below:{$4$}] (4) at (14,0) {};

    \draw[black!100,middlearrow={below}{1}] (0) -- (1);
    \draw[black!100,middlearrow={below}{0}] (1) -- (2);
    \draw[black!100,middlearrow={below}{2}] (2) -- (3);
    \draw[black!100,middlearrow={below}{-3}] (3) -- (4);

    \draw[black!100] (0) edge[bend left, middlearrow={below}{0.7}] (2);
    \draw[black!100] (1) edge[bend left, middlearrow={below}{0.5}] (3);
    \draw[black!100] (2) edge[bend left, middlearrow={below}{0.7}] (4);
    
    \draw[black!100] (0) edge[bend right, middlearrow={below}{0.9}] (3);
    \draw[black!100] (1) edge[bend right, middlearrow={below}{0.2}] (4);

    \draw[black!100] (0) edge[bend left, middlearrow={below}{1}] (4);


\end{tikzpicture}
\end{minipage}
\begin{minipage}{.45\textwidth}
    \centering
    \begin{tikzpicture}[
    middlearrow/.style 2 args={
        decoration={             
            markings, 
            mark=at position 0.5 with {\arrow[xshift=3.333pt]{triangle 45}, \node[#1] {#2};}
        },
        postaction={decorate}
    },
    particle/.style={circle, draw},
    minimum size=5pt
]


    \draw[->,thick] (16,-0.5) -- (22.5,-0.5);
    
    \draw[-,thick] (17,-0.4) -- (17,-0.6);
    \node (labelm1) at (17,-0.8) {$0$};
    
    \draw[-,thick] (18.5,-0.4) -- (18.5,-0.6);
    \node (labelm1) at (18.5,-0.8) {$1$};
    
    \draw[-,thick] (19.625,-0.4) -- (19.625,-0.6);
    \node (labelm1) at (19.625,-0.8) {$1.7$};

        \draw[-,thick] (21.5,-0.4) -- (21.5,-0.6);
    \node (labelm1) at (21.5,-0.8) {$3$};
    
    
    \node[particle] (1b) at (17,0) {0};
    \node[particle] (2b) at (18.5,0) {1};
    \node[particle] (3b) at (18.5,0.75) {2};
    \node[particle] (4b) at (21.5,0) {3};
    \node[particle] (5b) at (19.625,0) {4};

\end{tikzpicture}
\end{minipage}
\caption{On the left, the weighted graph from Figure \ref{fig:1}. On the right, the corresponding configuration for the MGS via the coupling from Proposition \ref{proposition:couplingProperty}. 
}
\label{fig:3}
\end{figure}

 \begin{remark}
 In fact, the limit \eqref{eq:convCouplingFK} also holds for any MGS with weight distribution $\nu$, not necessarily with starting configuration $\delta_0$.  We explain this fact more in details in Remark \ref{remark:indepInit}.
 \end{remark}
 
\subsection{ Stationary MGS for weight distributions with an atom at their essential supremum}
\label{subsec:statMGS}

For the remainder of this section, we assume that $\nu$ has the form $(1-p)\mu + p \delta_M$ with $M>0$, $ p \in (0,1] $ and $\mu$ a probability distribution on $[-\infty,M)$. We construct a stationary (in time) version of the max growth system (MGS) with weight distribution $\nu$. 
Similar constructions have been done in \cite{MR2028221, MR4010962} for the infinite-bin model using tools from extended renovation theory. 
Another similar construction have been done  for the observable describing the speed of the front in the MGS in  \cite{https://doi.org/10.48550/arxiv.2110.01559}. 
In both cases, the construction of the stationary process comes from renovation events, as introduced in \cite{MR2028221}, which consists in random times at which the future becomes independent from the past. 

Let us denote by $F_s(\lambda)$ the configuration obtained by shifting all the particles in $\lambda$ by $s\in\R$. In other words, $(F_s(\lambda))_i = \lambda_i - s$ for all $i$. 
\begin{proposition}[Time-stationary MGS]
\label{proposition:stationaryMGS}
We consider $M\in\R^*_+$ and $(X_i^{(n)})_{i \in \N, n \in \Z}$ i.i.d. random variables with distribution $\nu$ on $[-\infty,M]$ such that $\nu(\{M\}):=p>0$. Almost surely, there exists a unique process $( \widetilde{\lambda}^{(n)} )_{n \in \Z}$ taking values in $\mathcal{N}$ such that:
\begin{enumerate}[label=\emph{A.\arabic*}]
    \item $\forall n \in \Z, \,\widetilde{\lambda}^{(n+1)} = \Phi_{X^{(n+1)}}(\widetilde{\lambda}^{(n)})$, \label{enum:1}
    \item $\widetilde{\lambda}_1^{(0)}=0$.\label{enum:2}
\end{enumerate}
In addition, $F_{\widetilde{\lambda}^{(n)}_1}(\widetilde{\lambda}^{(n)})$ is $\mathcal{F}_n$-measurable for all $n \in \Z$, where $\mathcal{F}_n$ is the $\sigma$-algebra generated by $(X_i^{(k)})_{i \in \N, k \leq n}$. 
\end{proposition}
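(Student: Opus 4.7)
The plan is to use a renovation-theoretic construction in the style of Foss--Konstantopoulos and Mallein--Ramassamy, adapted to the MGS with an atom of mass $p$ at $M$. The central object is the renovation event
\[
R_k^L = \{X_1^{(k-L+1)} = X_1^{(k-L+2)} = \cdots = X_1^{(k)} = M\}, \qquad k \in \Z,\ L \geq 1,
\]
which is $\mathcal{F}_k$-measurable and, by independence of the weights, has probability $p^L > 0$. Its key deterministic consequence is that on $R_k^L$, whatever the configuration $\lambda^{(k-L)} \in \mathcal{N}$ may be, one has $\lambda_1^{(j)} = \lambda_1^{(j-1)} + M$ for every $j \in [k-L+1, k]$: indeed, $\lambda_i + X_i^{(j)} \leq \lambda_1^{(j-1)} + M$ uniformly in $i$ since $\lambda_i \leq \lambda_1$ and $X_i^{(j)} \leq M$, with equality attained at $i = 1$. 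Hence at time $k$, the $L$ particles added during $[k-L+1, k]$, seen from the new top, occupy the deterministic positions $0, -M, -2M, \dots, -(L-1)M$, while every other particle lies at position at most $-LM$.

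Building on this, I would construct $\widetilde{\lambda}^{(n)}$ by a backward Loynes-type coupling. For each $T \ge 0$, consider the MGS $(\lambda^{(T, n)})_{n \ge -T}$ started at the configuration $\delta_0$ at time $-T$ and driven by the weights $(X^{(j)})_{j > -T}$. I would show that, for each fixed $n$, the seen-from-top configuration $F_{\lambda_1^{(T, n)}}(\lambda^{(T, n)})$ stabilizes almost surely as $T \to \infty$, and that its $\mathcal{F}_n$-measurable limit is what defines $F_{\widetilde{\lambda}_1^{(n)}}(\widetilde{\lambda}^{(n)})$. The ergodic theorem applied to the Bernoulli shift on $(X^{(k)})_{k \in \Z}$, combined with $\Proba(R_0^L) = p^L > 0$, guarantees that $R_k^L$ occurs for infinitely many $k \leq n$ almost surely, for every $L$. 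Picking the largest such $k = k_L(n)$ and using the property above, the portion of the seen-from-top configuration at time $k$ lying above level $-LM$ is a deterministic function of the weights in $[k-L+1, k]$ and is in particular independent of any initial condition at time $-T < k-L$. Propagating forward from $k$ to $n$ via the deterministic MGS dynamics, then letting $L \to \infty$, the whole seen-from-top configuration at time $n$ should be determined by $(X^{(j)})_{j \le n}$, yielding the desired $\mathcal{F}_n$-measurability. The absolute position $\widetilde{\lambda}_1^{(0)}$ is then fixed by A.2, and A.1 propagates it to all $n \in \Z$.

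Uniqueness follows because any two processes satisfying A.1 have deterministic forward dynamics given the weights, so after a renovation time $k_L(n) < n$ they share the same seen-from-top configuration above level $-LM$; letting $L \to \infty$ forces full equality, and A.2 pins down the common shift. The main obstacle, in my view, is making the ``letting $L \to \infty$'' step rigorous: because the weights can be very negative (possibly equal to $-\infty$), old particles lying below level $-LM$ at time $k_L(n)$ could in principle re-enter the running maximum at some later time and thereby influence the seen-from-top configuration at time $n$ at heights above $-LM$. Overcoming this would likely require enriching $R_k^L$ with additional a.s.\ bounds on the auxiliary weights $X_i^{(j)}$ for $i \ge 2$ that still have positive probability, or a Borel--Cantelli style bookkeeping argument exploiting that $\lambda_1^{(n)}$ is nondecreasing with increments bounded by $M$, so that the gap between the running top and low-lying old particles grows at a controlled rate and the ``takeover'' events are caused only by weight configurations occurring finitely often almost surely.
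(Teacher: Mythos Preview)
Your renovation event $R_k^L$ and backward coupling are natural first ideas, but the obstacle you flag is not a technicality: it is the crux of the matter, and your event is not strong enough to close it. The point is that $R_k^L$ is a \emph{finite, backward} condition on the weights in $[k-L+1,k]$; it pins down the seen-from-top configuration at time $k$ above level $-LM$, but says nothing about the weights at times $j\in(k,n]$, which are precisely what govern whether deep particles influence the new maxima on that stretch. Increasing $L$ does not help: it pushes $k_L(n)$ further into the past and \emph{lengthens} the uncontrolled interval $(k_L(n),n]$. Your suggested fixes do not obviously work either. ``Enriching $R_k^L$'' with bounds on $X_i^{(j)}$ for $i\ge 2$ still only constrains times $\le k$, not the problematic times $>k$. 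And a Borel--Cantelli argument faces the difficulty that the bad event at a single time $j>k$ (all of $X_1^{(j)},\dots,X_L^{(j)}$ lying below $-(L-1)M$, say) has probability bounded below independently of $j$ whenever $\nu$ charges $(-\infty,-(L-1)M)$, so summing over the growing window $(k_L(n),n]$ gives no decay.

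The paper's construction replaces your event by a \emph{forward, infinite} one: the renovation set is $\mathcal{R}=\{m\in\Z:\forall i\ge1,\ \xi_{m+i-1}\le i\}$ with $\xi_j=\inf\{i:X_i^{(j)}=M\}$, i.e.\ the times at which an infinite triangular word begins. The deterministic Lemma~\ref{lemma:keyLemmaTr} shows that whenever $(\xi_m,\dots,\xi_{m+l-1})$ is triangular, every particle added on $[m,m+l-1]$ lands at height $\ge\lambda_1^{(m-1)}+M$ and its position relative to $\lambda_1^{(m-1)}$ is a function of the weights alone. The triangular condition is exactly what guarantees, at each future step $m+i-1$, that one of the top $i$ particles (all of which are newly added) receives weight $M$ and therefore dominates the maximum; deep particles are shut out \emph{forever}, not just for $L$ steps. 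One then reconstructs $\widetilde\lambda^{(0)}$ by iterating backward through $T_0>T_{-1}>\cdots$ in $\mathcal{R}$, each block $[T_k,T_{k+1}-1]$ contributing a slab of particles whose positions (relative to $\widetilde\lambda_1^{(T_k-1)}$) are determined by the weights in that block; since the front drops by at least $M$ across each block, the whole configuration is recovered in the limit. Your run-of-$M$'s event is the special case $\xi_j=1$ on a finite window; the paper's insight is that the right strengthening is the triangular condition, and that it must be imposed on the entire future from $m$ onward rather than on a finite past.
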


\begin{remark}\label{remark:stat}
The process $(\widetilde{\lambda}^{(n)})_{n \in \Z}$ is stationary in the sense that it is invariant in time. Since $(X_i^{(n)})_{i \in \N, n \in \Z}$ has the same distribution as $(X_i^{(n+1)})_{i \in \N, n \in \Z}$, the processes obtained via Proposition \ref{proposition:stationaryMGS} for those two weights sequences have the same distribution. 
\end{remark}

\begin{remark}
In \cite{https://doi.org/10.48550/arxiv.2110.01559}, a similar construction of the stationary MGS is done for all measures with $p\in[0,1]$, which is a more general case. However, we do not obtain the formula from Proposition \ref{proposition:formuleMGS} from that other construction.  
In \cite{MR4010962}, the weight distribution has form $p \delta_1 + (1-p) \delta_{-\infty}$, and the renovation events consist in “good” and “bad” words appearing in a sequence of i.i.d. random variables with geometric distribution. The construction leads to a formula for $C((1-p)\delta_{-\infty}+p\delta_1 )$ as a sum, over an infinite class of words, of polynomials in $p$. This formula leads to the proof of the analyticity of $p \mapsto C((1-p)\delta_{-\infty} + p\delta_1)$. 
The renovation event used in this article are similar in shape to those of \cite{https://doi.org/10.48550/arxiv.2110.01559}. However, the construction done in this article using those events looks more like what have been done in \cite{MR4010962}. 
In our case, the renovation events we use  lead us to a formula for $C(\nu)$ as a sum, over an infinite class of words, of polynomials in $p$ whose coefficients depend on $\mu$. 
\end{remark}

The proof of Proposition \ref{proposition:stationaryMGS} relies on the existence of renovation events. They consist in particular sequences of weights, which are introduced in the following Lemma \ref{lemma:keyLemmaTr}. Firstly, let us introduce some necessary notations.

A \emph{word} is defined as a finite sequence of positive integers. Let  $\mathcal{A} = \cup_{n \in \Z_+} \N^n$ denote the set of all words. 
For $\alpha  =(\alpha_1, \dots, \alpha_n) \in \mathcal{A}$, $|\alpha|:=n$ is the \emph{length} of the word $\alpha$. 
We denote by $\mathcal{T} = \{ \alpha \in \mathcal{A} \mid \forall i \in \llbracket 1 , |\alpha | \rrbracket, \alpha_i \leq i\} $ the set of all \emph{triangular words}.

For $l\in\N$ and $x=(x^{(1)},\dots,x^{(l)}) \in \mathcal{W}^l$, we set $\Phi_{x}:= {\Phi_{x^{(l)}}} \circ \dots \circ  \Phi_{x^{(1)}} $ the map corresponding to the successive applications of the weight sequences $x^{(1)}, x^{(2)}, \dots, x^{(l)}$ on configurations in $\mathcal{N}$. 
Let us define $\alpha(x)$ \emph{the word associated to the weight sequence} $x$ as the word of length $l$, for which the $n$-th letter corresponds to the index of the first occurrence of the value $M$ in the sequence $ x^{(n)}$. More explicitly, for all $n\in\llbracket 1 , l\rrbracket$, $$ \alpha_n(x) := \inf\{i \in \N, x^{(n)}_i = M\}. $$

Since $\nu(\{M\})>0$, it follows that $\alpha_i( X^{(a)}, \dots , X^{(b)} )$ is finite a.s. for all $a < b \in \Z$ and $i \in \llbracket a, b \rrbracket$. We denote by $\mathcal{W}_{\leq M} := [- \infty, M]^{\N}$ the set of all the elements $x \in \mathcal{W}$ such that $x_i \leq M$ for all $i\geq 1$.

\begin{lemma}\label{lemma:keyLemmaTr}
For all $l\in\N$ and $x = (x^{(1)}, \dots, x^{(l)}  ) \in (\mathcal{W}_{\leq M})^l$ such that $\alpha(x) \in \mathcal{T}$, the quantity $\mathfrak{m}(\Phi_{(x^{(1)},\dots, x^{(l-1)})}(\lambda), x^{(l)})-\lambda_1$ depends only on $\alpha(x)$ and $(x^{(i)}_j)_{i \in \llbracket 1 , l \rrbracket, j \in \llbracket 1 , \alpha_i(x) - 1\rrbracket}$. In particular, it does not depend on the configuration $\lambda$. In addition, $\mathfrak{m}(\Phi_{(x^{(1)},\dots, x^{(i-1)})}(\lambda), x^{(i)})-\lambda_1 \geq M$ for all $i \in \llbracket 1 , l \rrbracket$.
\end{lemma}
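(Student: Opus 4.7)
The plan is to prove a strengthened statement by induction on $l$, tracking not just the new maximum but the whole ``top layer'' of the iterated configuration. Write $\lambda^{(i)} := \Phi_{(x^{(1)},\dots,x^{(i)})}(\lambda)$ for $i \in \llbracket 0,l\rrbracket$, with $\lambda^{(0)}=\lambda$. The invariant I would carry along is: there exist non-negative reals $a^{(i)}_1 \geq \dots \geq a^{(i)}_{i+1}$, determined only by $(\alpha_1(x),\dots,\alpha_i(x))$ and the entries $\{x^{(j)}_k : 1\leq j\leq i,\ 1\leq k < \alpha_j(x)\}$, such that the $i+1$ largest particles of $\lambda^{(i)}$ occupy positions $\lambda_1+a^{(i)}_1,\dots,\lambda_1+a^{(i)}_{i+1}$, while every other particle of $\lambda^{(i)}$ lies at a position $\leq \lambda_1$. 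The base case $i=0$ is immediate with $a^{(0)}_1=0$.

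For the inductive step, set $k=\alpha_{i+1}(x)$; the triangular hypothesis gives $k\leq i+1$, so index $k$ sits inside the tracked top layer of $\lambda^{(i)}$. I would split the maximum $\mathfrak{m}(\lambda^{(i)},x^{(i+1)}) = \max_j(\lambda^{(i)}_j + x^{(i+1)}_j)$ into three regions. For $j\geq i+2$, the bounds $\lambda^{(i)}_j \leq \lambda_1$ and $x^{(i+1)}_j \leq M$ give a term $\leq \lambda_1 + M$. At $j=k$ the term equals $\lambda_1 + a^{(i)}_k + M \geq \lambda_1 + M$, so indices $j\geq i+2$ can never realize the maximum. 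For $j > k$ inside the top layer, monotonicity $a^{(i)}_j \leq a^{(i)}_k$ combined with $x^{(i+1)}_j \leq M$ bounds the term by $\lambda_1 + a^{(i)}_k + M$, which is no better than the $j=k$ contribution. Hence only indices $j<k$ can improve things, and for those $x^{(i+1)}_j < M$ is recorded data. The outcome is
$$
\mathfrak{m}(\lambda^{(i)},x^{(i+1)}) - \lambda_1 \;=\; \max\!\Bigl(a^{(i)}_k + M,\ \max_{1\leq j<k}\bigl(a^{(i)}_j + x^{(i+1)}_j\bigr)\Bigr),
$$
which is $\geq M$ and a function of the allowed data alone. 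The new particle is then placed at a position $\geq \lambda_1$, so re-sorting the $i+2$ top particles of $\lambda^{(i+1)}$ produces $a^{(i+1)}_1 \geq \dots \geq a^{(i+1)}_{i+2} \geq 0$ as functions of the allowed data, closing the induction.

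The lemma then follows: applying the invariant at rank $i=l-1$ and reading off the displayed formula with $x^{(l)}$ yields the first assertion, while the $\geq M$ bound obtained at every step of the induction gives the second. The main obstacle, in my view, is really the choice of invariant: a direct induction that tracks only the scalar $\mathfrak{m}(\lambda^{(i-1)},x^{(i)})-\lambda_1$ fails to close, because a later maximum may recruit a particle other than the one just added; one must carry the full top layer $(a^{(i)}_1,\dots,a^{(i)}_{i+1})$. The triangular condition $\alpha_{j}\leq j$ is exactly the statement that the relevant index at every step stays inside this tracked layer, so that no information beyond $\{x^{(j)}_k : k<\alpha_j(x)\}$ is ever needed.
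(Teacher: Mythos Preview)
Your proof is correct and follows essentially the same approach as the paper: both arguments induct on $l$, use the triangular condition to ensure the index $k=\alpha_{l+1}(x)$ lands among the particles added so far (plus the original front $\lambda_1$), and conclude that the maximum at each step is a function of the previously computed relative positions together with the entries $x^{(l+1)}_j$ for $j<k$. The only cosmetic difference is that you carry the full sorted top layer $(a^{(i)}_1,\dots,a^{(i)}_{i+1})$ as an explicit invariant, whereas the paper records the unsorted increments $y_i=\mathfrak{m}(\Phi_{(x^{(1)},\dots,x^{(i-1)})}(\lambda),x^{(i)})-\lambda_1$ (with $y_{l+1}=0$) and sorts via a permutation $\varphi$ at the moment it is needed; your $a^{(i)}$'s are exactly these $y$'s sorted.
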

We illustrate this Lemma \ref{lemma:keyLemmaTr} in Figure \ref{fig:4}. 

\begin{figure}[!t]
\centering
\begin{minipage}{.45\textwidth}
    \centering
    \begin{tikzpicture}[
    middlearrow/.style 2 args={
        decoration={             
            markings, 
            mark=at position 0.5 with {\arrow[xshift=3.333pt]{triangle 45}, \node[#1] {#2};}
        },
        postaction={decorate}
    },
    particle/.style={circle, draw},
    minimum size=5pt,
    scale=0.95
]
    \node[label=left:{$\Phi_x(\lambda) :$}] at (8.5,-0.5) {};
    \draw[->,thick] (8.5,-0.5) -- (12.5,-0.5);
    
    \draw[-,thick] (9,-0.4) -- (9,-0.6);
    \node (labelm1) at (9,-0.8) {$0$};
    
    \draw[-,thick] (10.5,-0.4) -- (10.5,-0.6);
    \node (labelm2) at (10.5,-0.8) {$1$};
    
    \draw[-,thick] (12,-0.4) -- (12,-0.6);
    \node (labelm3) at (12,-0.8) {$2$};
    
    
    \node[particle,text=white] (1) at (9,0) {0};
    \node[particle,blue] (2) at (10.5,0) {1};
    \node[particle,blue] (3) at (11.25,0) {2};
    \node[particle,blue] (4) at (12,0) {3};
    \node[particle,blue] (5) at (12,0.75) {4};

\end{tikzpicture}
\end{minipage}
\begin{minipage}{.45\textwidth}
    \centering
    \begin{tikzpicture}[
    middlearrow/.style 2 args={
        decoration={             
            markings, 
            mark=at position 0.5 with {\arrow[xshift=3.333pt]{triangle 45}, \node[#1] {#2};}
        },
        postaction={decorate}
    },
    particle/.style={circle, draw},
    minimum size=5pt,
    scale=0.95
]
    \node[label=left:{$\Phi_x(\lambda') :$}] at (15,-0.5) {};
    \draw[->,thick] (15,-0.5) -- (20.5,-0.5);
    
    \draw[-,thick] (15.5,-0.4) -- (15.5,-0.6);
    \node (labelm1b) at (15.5,-0.8) {$2$};
    
    \draw[-,thick] (17,-0.4) -- (17,-0.6);
    \node (labelm1b) at (17,-0.8) {$3$};
    
    \draw[-,thick] (18.5,-0.4) -- (18.5,-0.6);
    \node (labelm2b) at (18.5,-0.8) {$4$};
    
    \draw[-,thick] (20,-0.4) -- (20,-0.6);
    \node (labelm3b) at (20,-0.8) {$5$};
    
    
    \node[particle,text=white] (1bbb) at (15.5,0) {0};
    \node[particle,text=white] (1bb) at (17,0.75) {0};
    \node[particle,text=white] (1b) at (17,0) {0};
    \node[particle, blue] (2b) at (18.5,0) {1};
    \node[particle, blue] (3b) at (19.25,0) {2};
    \node[particle, blue] (4b) at (20,0) {3};
    \node[particle, blue] (5b) at (20,0.75) {4};

\end{tikzpicture}
\end{minipage}
\caption{Illustration of Lemma \ref{lemma:keyLemmaTr} for $M=1$. The configurations $\Phi_x(\lambda)$ and $\Phi_x(\lambda')$ where $\lambda=\delta_0$, $\lambda'= \delta_2 + 2 \delta_3$ and $x=(x^{(1)}, x^{(2)}, x^{(3)},x^{(4)}) $, with $x^{(1)} = (1, 0.2, -3, \dots)$,  $x^{(2)} = (0.5, 1, 0.5,\dots)$, $x^{(3)} = (-0.2, 1, 0.3, 1 ,\dots)$ and $x^{(4)} = (-1.5, 0.5, 0, 1, \dots)$. The configurations $\lambda$ and $\lambda'$ are represented in black. The particles added are in blue and indexed by their order of appearance (the particle indexed by $i$ corresponds to the particle obtained from the weight sequence $x^{(i)}$). Here, changing the values appearing after the first $1$ in the sequences $x^{(i)}$ would not change the obtained configurations. In this case, the word $\alpha(x)=(1,2,2,4)$ is indeed triangular.}
\label{fig:4}
\end{figure}

\begin{proof}
 We proceed by induction on $l$. We consider $x = x^{(1)} \in \mathcal{W}_{\leq M}$ such that $\alpha(x) \in \mathcal{T}$.  We have $x^{(1)}_1 = M$ because $\alpha(x)$ is a triangular word. 
In this case, $\mathfrak{m}(\lambda, x^{(1)})-\lambda_1=M$ and the result is proved for $l=1$.

We now assume that Lemma \ref{lemma:keyLemmaTr} holds for some $l \in \N$. We consider $x = (x^{(1)}, \dots, x^{(l+1)}  ) \in (\mathcal{W}_{\leq M})^{l+1}$ such that $\alpha(x) \in \mathcal{T}$. 
Since $\alpha(x) \in \mathcal{T}$, there exists $k_0 \in \llbracket 1 , l+1 \rrbracket$ such that $x^{(l+1)}_{k_0} = M $. In addition, $x^{(l+1)} \in \mathcal{W}_{\leq M}$. Therefore, $ x^{(l+1)}_i \leq x^{(l+1)}_{k_0} $ for all $i \geq k_0$. 
As a consequence, 
$$\mathfrak{m}(\Phi_{x'}(\lambda), x^{(l+1)}) = \max_{1 \leq i \leq k_0}((\Phi_{x'}(\lambda))_i + x^{(l+1)}_i )$$
where $x' = (x^{(1)}, \dots, x^{(l)})$. We set $y_i = \mathfrak{m}(\Phi_{(x^{(1)},\dots, x^{(i-1)})}(\lambda), x^{(i)}) - \lambda_1 $ for all $i \in \llbracket 1 , l \rrbracket$, $y_{l+1} = 0$ and $\varphi$ a permutation of $\llbracket 1, k_0 \rrbracket$ such that $ y_{\varphi(1)} \geq \dots \geq y_{\varphi(k_0)}$. 
By induction hypothesis applied to $x'$, for all $\lambda \in \mathcal{N}$ and  $i  \in \llbracket 1, l \rrbracket$, $ y_i$ depends only on $(\alpha_1(x), \dots, \alpha_l(x))$ and  $(x^{(i)}_j)_{i \in \llbracket 1 , l \rrbracket, j \in \llbracket 1 , \alpha_i(x)-1 \rrbracket}$. 
In addition, $y_i \geq M > 0$ for all $i\in \llbracket 1 , l \rrbracket$. Then, we obtain 
\begin{equation*}
    \mathfrak{m}(\Phi_{x'}(\lambda), x^{(l+1)}) = \max_{1 \leq i \leq k_0}(\lambda_1 + y_{\varphi(i)} + x^{(l+1)}_i ).
\end{equation*} Since $y_1, \dots, y_{l}$ depends only on $(\alpha_1(x), \dots, \alpha_l(x))$ and $(x_j^{(i)})_{i \in \llbracket 1 , l \rrbracket, j \in \llbracket 1 , \alpha_i(x) - 1\rrbracket}$,  $\mathfrak{m}(\Phi_{x'}(\lambda), x^{(l+1)}) - \lambda_1$ depends only on $\alpha(x)$ and $(x^{(i)}_j)_{i \in \llbracket 1 , l+1 \rrbracket, j \in \llbracket 1 , \alpha_i(x) - 1\rrbracket}$. 
In addition, $y_i \geq 0$ for all $i$. Then, $\mathfrak{m}(\Phi_{x'}(\lambda), x^{(l+1)})-\lambda_1 \geq y_{\varphi(k_0)} + x^{(l+1)}_{k_0} \geq x^{(l+1)}_{k_0} = M $, which concludes the inductive step.
\end{proof}

\begin{remark}\label{remark:keyLemmaTr}
As a consequence of Lemma \ref{lemma:keyLemmaTr}, for all $\lambda \in \mathcal{N}$ and $x \in (\mathcal{W}_{\leq M})^l$ such that $\alpha(x) \in \mathcal{T}$, we have 
$$ \Phi_x(\lambda) = \lambda + \sum_{i=1}^l \delta_{\lambda_1 + y_i},$$ 
where 
$y_i := \mathfrak{m}(\Phi_{(x^{(1)},\dots, x^{(i-1)})}(\lambda), x^{(i)})-\lambda_1 \geq M>0$ for all $i \in \llbracket 1 , l \rrbracket$. 
As a consequence, for $(x_j^{(i)})_{(i,j)\in\N^2}$ such that $\alpha ((x^{(1)}, \dots , x^{(l)}) ) \in \mathcal{T}$ for all $l \in \N$,  if we know only $\lambda_1$ and $(x^{(i)}_j)_{i \in \llbracket 1, l \rrbracket, j \in \N}$, then we can obtain the positions of all the particles with positions greater than $\lambda_1$ in $\Phi_{(x^{(1)}, \dots, x^{(l)})}(\lambda)$ for all $l \in \N$. 
\end{remark}

We use Lemma \ref{lemma:keyLemmaTr} to construct the stationary version of the MGS.
To do so, we set $\xi_n = \inf\{i \in \N, X^{(n)}_i = M\}$ for all $n \in \Z$, where $(X^{(n)}_i)_{\substack{i \in \N, n\in\Z}}$ are i.i.d. random variables with distribution $\nu$. 
We now prove that there are almost surely infinite triangular words in the sequence $(\xi_n)_{n \in \Z}$. 
We consider $\mathcal{R} =  \Set {n \in \Z}{ \forall i \in \N,\xi_{n + i-1} \leq i}$ the set of all the times at which an infinite triangular word starts. By the previous Remark \ref{remark:keyLemmaTr}, those times are renovation events in the sense that, for all $n \in \mathcal{R}$, we are able to reconstruct the future of the process after time $n$ knowing only the front position at this time.  

\begin{lemma}\label{lemma:TpastFinite}
For $p \in (0,1]$, $\inf \mathcal{R} = -\infty$ and $\sup \mathcal{R} = +\infty$ almost surely.
\end{lemma}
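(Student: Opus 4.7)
The plan is to exploit the fact that, since $\nu(\{M\}) = p > 0$, the random variables $(\xi_n)_{n \in \Z}$ are i.i.d.\ with geometric distribution of parameter $p$ on $\N$, namely $\Proba(\xi_n = k) = (1-p)^{k-1}p$ for $k \geq 1$. The question then reduces to a statement about the occurrence of a certain ``pattern'' in an i.i.d.\ sequence, and can be handled via a one-point probability calculation plus ergodicity.

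First I would compute $\Proba(0 \in \mathcal{R})$. By independence,
\begin{equation*}
\Proba(0 \in \mathcal{R}) = \prod_{i=1}^{\infty} \Proba(\xi_{i-1} \leq i) = \prod_{i=1}^{\infty} \left(1 - (1-p)^i \right) =: q.
\end{equation*}
When $p = 1$ we get $q = 1$ trivially. When $p \in (0,1)$, the convergence of $\sum_{i \geq 1} (1-p)^i = (1-p)/p < \infty$ implies $\sum_{i \geq 1} \log(1 - (1-p)^i) > -\infty$, hence $q > 0$. In particular $\Proba(n \in \mathcal{R}) = q > 0$ for every $n \in \Z$ by shift invariance of the law of $(\xi_n)_{n \in \Z}$.

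Next I would invoke ergodicity. The sequence $(\xi_n)_{n \in \Z}$, being i.i.d., is stationary and ergodic (in fact mixing) under the shift $\theta$, and the event $\{0 \in \mathcal{R}\}$ is a measurable function of $(\xi_n)_{n \geq 0}$. Since $\{n \in \mathcal{R}\} = \theta^{-n}\{0 \in \mathcal{R}\}$, Birkhoff's ergodic theorem gives
\begin{equation*}
\frac{1}{N} \sum_{n=0}^{N-1} \mathds{1}_{\{n \in \mathcal{R}\}} \xrightarrow[N \to \infty]{\text{a.s.}} q > 0,
\qquad \frac{1}{N} \sum_{n=1}^{N} \mathds{1}_{\{-n \in \mathcal{R}\}} \xrightarrow[N \to \infty]{\text{a.s.}} q > 0.
\end{equation*}
Consequently $\mathcal{R}$ has positive upper and lower density almost surely, which in particular implies $\sup \mathcal{R} = +\infty$ and $\inf \mathcal{R} = -\infty$ almost surely.

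The only real point requiring care is the positivity of the infinite product $q$, which is a short convergence check using $\log(1-x) \sim -x$ together with the geometric summability of $(1-p)^i$. The ergodic step is routine for an i.i.d.\ shift. Alternatively, if one wished to avoid ergodic theory, one could apply a second Borel--Cantelli argument to a suitably spaced subsequence (e.g.\ along $n_k = k L$ for $L$ large, truncating the defining condition of $\mathcal{R}$ to the first $L$ coordinates so as to get independent events with probability bounded below), but the ergodic route is shorter and more transparent.
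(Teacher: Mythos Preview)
Your proof is correct and follows essentially the same route as the paper: compute $\Proba(0\in\mathcal{R})=\prod_{k\ge1}(1-(1-p)^k)>0$ and then conclude by ergodicity of the i.i.d.\ shift. You supply a bit more detail (the convergence check for the infinite product and the explicit appeal to Birkhoff), but the structure is identical to the paper's argument.
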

\begin{proof}
We notice that the probability distribution of the system is invariant by the shift map 
$$\fonction{\sigma}{\R^{\Z \times \N}}{{\R^{\Z \times \N}}}{ (x_i^{(n)})_{n \in \Z, i \in \N}}{ (x_i^{(n+1)})_{n \in \Z, i \in \N}}.$$ 
In addition, $\Proba_p(0 \in \mathcal{R}) = \prod_{k \geq 1}(1-(1-p)^k)$, which is positive for $p \in (0,1]$.
By ergodicity, $\inf\mathcal{R} = - \infty$ and $\sup\mathcal{R} = + \infty$.  
\end{proof}

\begin{remark}\label{remark:interpretTr}
One can give the following interpretation for an element of $\mathcal{R}$ in terms of last passage percolation via the coupling from Proposition \ref{proposition:couplingProperty}: 
$$n \in \mathcal{R} \Leftrightarrow \forall k > n, \exists i_0 = n  < i_1 < \dots < i_l = k,  \forall j \in \llbracket 1 , l \rrbracket, X_{i_{j-1}, i_j} = M, $$ where $X_{i,j}$ is the weight of the edge $(i,j)$ in the graph. In other words, $n$ is in $\mathcal{R}$ if and only if there exists a directed path made of edges with weight $M$ from vertex $n$ to any vertex $k>n$ in the graph. See Figure \ref{fig:9} for an illustration.
\end{remark}
\begin{figure}
    \centering
    \begin{tikzpicture}[
    middlearrow/.style 2 args={
        decoration={             
            markings, 
            mark=at position 0.5 with {\arrow[xshift=3.333pt]{triangle 45}, \node[#1] {#2};}
        },
        postaction={decorate}
    },
    scale=1.5
]

    \node[circle,fill=black,inner sep=0pt,minimum size=5pt,label=below:{$0$}] (0) at (6,0) {};
    \node[circle,fill=black,inner sep=0pt,minimum size=5pt,label=below:{$1$}] (1) at (8,0) {};
    \node[circle,fill=black,inner sep=0pt,minimum size=5pt,label=below:{$2$}] (2) at (10,0) {};
    \node[circle,fill=black,inner sep=0pt,minimum size=5pt,label=below:{$3$}] (3) at (12,0) {};

    \draw[black!100,middlearrow={below}{1}] (0) -- (1);
    \draw[black!100,middlearrow={below}{0}] (1) -- (2);
    \draw[black!100,middlearrow={below}{-0.2}] (2) -- (3);

    \draw[black!100] (0) edge[bend left, middlearrow={below}{1}] (2);
    \draw[black!100] (1) edge[bend left, middlearrow={below}{1}] (3);
    
    \draw[black!100] (0) edge[bend left, middlearrow={below}{0.9}] (3);

\end{tikzpicture}
    \caption{Illustration of Remark \ref{remark:interpretTr} with $M=1$. For the edge weights represented on the graph, $(\xi_1,\xi_2,\xi_3) =(1,2,2)$ is a triangular word, and there is a directed path made of edges with weights $1$ from $0$ to any element of $\{1,2,3\}$.}
    \label{fig:9}
\end{figure}

By Lemma \ref{lemma:TpastFinite}, we can enumerate the elements of $\mathcal{R}$ as $\dots < T_{-1} < T_{0} \leq 0 < T_1 < T_2 < \dots$.

\begin{remark}\label{remark:notDep}
    By Lemma \ref{lemma:keyLemmaTr} and Proposition \ref{proposition:couplingProperty}, $W_{T_{K_n}} \leq W_n < W_{T_{K_n+1}} $ for all $n \in \N$, where $(T_k)_{k \geq 1}$ enumerates the elements of $\mathcal{R}$ for the MGS defined in Proposition \ref{proposition:couplingProperty} and $K_n := \max\Set{k \in \Z}{T_k \leq n}$. In addition, Lemma \ref{lemma:keyLemmaTr} also implies that $W_{T_k} = \max_{0 \leq n \leq T_k} W_n = \lambda^{(T_k)}_1 $ for all $k \in \N$ since $W_{T_k} = 1 + \max_{0 \leq n \leq T_k-1} W_n$. 
    Since $T_k$ tends to $+\infty$ as $k$ tends to $+\infty$,  $(n^{-1}\max\{W_j, j\in\llbracket 0,n\rrbracket\})_n$ and $(n^{-1}W_n)_n$ have the same limit as $n$ tends to $+\infty$. As a consequence, \eqref{eq:convCouplingFK} is proved.
\end{remark}

\begin{remark}\label{remark:indepInit}
    By Proposition \ref{proposition:couplingProperty}, notice that the time constant $C(\nu)$ is equal to the limit of $n^{-1}(W_n-W_{T_1}) = \max_{T_1 \leq k \leq n} (\lambda^{(k)}_1 - \lambda^{(T_1)}_1)$ as $n$ tends to infinity. Since this quantity is independent of the initial configuration of the considered MGS by Lemma \ref{lemma:keyLemmaTr}, \eqref{eq:convCouplingFK} holds for any initial configuration $\lambda^{(0)}$ for $\lambda$.
\end{remark}
 
\begin{proof}[Proof of Proposition \ref{proposition:stationaryMGS}]
For $k \in \Z$, we set $Y^{(k)} = ( X^{(T_k)}, \dots, X^{(T_{k+1}-1)})$, where $ X^{(n)}= (X^{(n)}_i)_{i\in\N}. $

By definition of $T_k$, $\alpha(Y^{(k)}) \in \mathcal{T}$ for all $ k \in \Z$.
We set $y_j^{(k)} = \mathfrak{m}(\Phi_{(X^{(T_k)},\dots, X^{(j-1)})}(\theta), X^{(j)}) - \theta_1$ for all $k \in \Z$, $j \in \llbracket T_k, T_{k+1}-1\rrbracket$ and any $\theta \in \mathcal{N}$. Thanks to Lemma \ref{lemma:keyLemmaTr}, $y_j^{(k)}$ depends only on $(X^{(i)})_{i \in \llbracket T_k, T_{k+1} - 1 \rrbracket}$ and not on $\theta$. As a consequence, if $(\widetilde{\lambda}^{(n)})_{n \in \Z}$ verifies assumption \ref{enum:1} from Proposition \ref{proposition:stationaryMGS}, then the position $\mathfrak{m}(\widetilde{\lambda}^{(i-1)}, X^{(i)})$ of the particle added at time $i \in \llbracket T_0 , T_1-1 \rrbracket $ depends only on $  \widetilde{\lambda}^{(T_0-1)}_1$ and $(X^{(T_0)}, \dots , X^{(T_1-1)})$:
\begin{equation}\label{eq:proofStatMGS1}
    \mathfrak{m}(\widetilde{\lambda}^{(i-1)}, X^{(i)}) = \widetilde{\lambda}^{(T_0-1)}_1 + y_i^{(0)}.
\end{equation}
Since $T_0 \leq 0 < T_1$ and $\alpha((x^{(T_0)}, \dots , x^{(0)})) \in \mathcal{T}$, by Lemma \ref{lemma:keyLemmaTr}, $$\widetilde{\lambda}^{(0)}_1 = \max_{i \in \llbracket T_0,0\rrbracket} (\widetilde{\lambda}^{(T_0-1)}_1 + y_i^{(0)}) = \widetilde{\lambda}^{(T_0-1)}_1 +  \max_{i \in \llbracket T_0,0\rrbracket} y_i^{(0)}.$$    
If we assume that $(\widetilde{\lambda}^{(n)})_{n \in \Z}$ also verifies assumption \ref{enum:2} from Proposition \ref{proposition:stationaryMGS}, then necessarily
\begin{equation}\label{eq:proofStatMGS2}
    \widetilde{\lambda}^{(T_0-1)}_1 = - \max_{i \in \llbracket T_0,0\rrbracket} y_i^{(0)} .
\end{equation}
We set $z_i^{(0)} = y_i^{(0)}-\max_{j \in \llbracket T_0,0\rrbracket} y_j^{(0)}$ for all $i \in \llbracket T_0, T_1 -1 \rrbracket $. By \eqref{eq:proofStatMGS1} and \eqref{eq:proofStatMGS2}, for all $i\in \llbracket T_0, T_1 -1 \rrbracket$, we have  \begin{equation*}
    \mathfrak{m}(\widetilde{\lambda}^{(i-1)}, X^{(i)}) = z_i^{(0)}.
\end{equation*}
 By Remark \ref{remark:keyLemmaTr}, we notice that \begin{equation*}
     \widetilde{\lambda}^{(0)} = \widetilde{\lambda}^{(T_0 - 1)} + \sum_{i\in \llbracket T_0,0\rrbracket} \delta_{\widetilde{\lambda}^{(T_0 - 1)}_1 + y_i^{(0)}} =  \widetilde{\lambda}^{(T_0 - 1)} + \sum_{i\in \llbracket T_0,0\rrbracket} \delta_{ z_i^{(0)}}.
\end{equation*}
For $i\in \llbracket T_0, T_1 - 1 \rrbracket$, $y_i^{(0)} \geq M $  by Lemma \ref{lemma:keyLemmaTr}. Then $z_i^{(0)} \geq \widetilde{\lambda}^{(T_0 - 1)}_1 + M$. In addition, all the particles in the configuration $\widetilde{\lambda}^{(T_0 - 1)}$ have their positions in $(-\infty, \widetilde{\lambda}^{(T_0 - 1)}_1]$. Therefore, the positions of all the particles in $\widetilde{\lambda}^{(0)}$ whose positions are in $(-M,0]$ are all the $z_i^{(0)}$ for $i \in \llbracket T_{0}, 0\rrbracket $ such that $ z_i^{(0)} > -M$.

Now, we can iterate this method in order to obtain all the particles positions in $(-2M,0]$ of the configuration $\lambda^{(0)}$, and then in $(kM,0]$ for all $k \leq -1$. Let us consider an integer $k \leq -1$. For all $i \in \llbracket T_{k} , T_{k+1}-1 \rrbracket$, 
\begin{equation}\label{eq:proofConstrucStat}
    \mathfrak{m}(\widetilde{\lambda}^{(i-1)}, X^{(i)}) = \widetilde{\lambda}^{(T_{k}-1)}_1 + y_i^{(k)},
\end{equation}
As a consequence, by Lemma \ref{lemma:keyLemmaTr},
$$ \widetilde{\lambda}^{(T_{k+1}-1)}_1 = \widetilde{\lambda}^{(T_{k}-1)}_1 + \max_{j \in \llbracket T_{k}, T_{k+1} -1 \rrbracket} y^{(k)}_j$$
Then, if we have $ \widetilde{\lambda}^{(T_{k+1}-1)}_1 $, we obtain $\widetilde{\lambda}^{(T_{k}-1)}_1$, and we can compute all $\mathfrak{m}(\widetilde{\lambda}^{(i-1)}, X^{(i)}) $ for $i \in \llbracket T_{k} , T_{k+1}-1 \rrbracket$ by \eqref{eq:proofConstrucStat}. As a consequence, by induction on $k$, we obtain $\mathfrak{m}(\widetilde{\lambda}^{(i-1)}, X^{(i)}) $ for all $ T_k \leq i \leq 0 $, and those positions contains all the particles positions in $(kM,0]$ since $ \lambda_1^{(T_{i})}-\lambda_1^{(T_{i}-1)} \geq M $ for all $i \leq 0$.
Therefore, $\widetilde{\lambda}^{(0)}$ is almost surely uniquely defined by assumptions \ref{enum:1} and \ref{enum:2}. In addition, we notice that by construction, $F_{\widetilde{\lambda}^{(0)}_1}= \widetilde{\lambda}^{(0)}$ is $\mathcal{F}_0$-measurable.

Now that we have constructed $\widetilde{\lambda}^{(0)}$, for all $n \geq 1$, $\widetilde{\lambda}^{(n)}$ is obtained by iterating $n$ times assumption \ref{enum:1}. By construction, it is clear that $F_{\widetilde{\lambda}^{(n)}_1}(\widetilde{\lambda}^{(n)})$ is $\mathcal{F}_n$-measurable for all $n \geq 1$.

In order to get $\widetilde{\lambda}^{(n)}$ for $n \leq -1$, it suffices to remove the particles with positions $ \mathfrak{m}(\widetilde{\lambda}^{(i-1)}, X^{(i)})$ from $\lambda^{(0)} $ for all $i \in \llbracket n+1, 0 \rrbracket$ since the process should verify assumption \ref{enum:1}. Since we managed to compute those positions when we constructed $\widetilde{\lambda}^{(0)}$, we obtain $\widetilde{\lambda}^{(n)}$. Another way to construct $\widetilde{\lambda}^{(n)}$ for such $n$ would be to use the same method as for $\widetilde{\lambda}^{(0)}$ with the sequence $(X^{(k)})_{k \leq n}$. Since the process is almost surely uniquely defined by construction, we would almost surely obtain the same configuration. With this second method, it is clear that $F_{\widetilde{\lambda}^{(n)}_1}(\widetilde{\lambda}^{(n)})$ is $\mathcal{F}_n$-measurable, which concludes the proof.
\end{proof}

Now that we have constructed the stationary version $(\widetilde{\lambda}^{(n)})_{n \in \Z}$ of the MGS, we can couple it with the standard MGS $(\lambda^{(n)})_{n \in \Z_+}$ using the same renovation events. 

\begin{proposition}[Coupling property]\label{proposition:couplingStat}
We consider $(\widetilde{\lambda}^{(n)})_{n \in \Z}$ the stationary version of the MGS introduced in Proposition \ref{proposition:stationaryMGS} with  weights $(X^{(i)}_j)_{i \in \Z, j \in \N}$. We consider a configuration $\lambda^{(0)} \in \mathcal{N}$ and we set $ \lambda^{(n)} = \Phi_{X^{(n)}}(\lambda^{(n-1)})$ for all $n\geq 1$. Then, $(\lambda^{(n)})_{n \geq 0}$ is a standard MGS. For all $n \geq T_1$, \begin{equation}\label{eq:couplingStat}
    \mathfrak{m}(\lambda^{(n-1)}, X^{(n)}) - \mathfrak{m}(\widetilde{\lambda}^{(n-1)}, X^{(n)}) = \lambda^{(T_1 - 1)}_1 - \widetilde{\lambda}^{(T_1 - 1)}_1.
\end{equation}
In addition, $\lambda_1^{(n)} -  \widetilde{\lambda}_1^{(n)} = \lambda^{(T_1 - 1)}_1 - \widetilde{\lambda}^{(T_1 - 1)}_1$ for all $n \geq T_1$. 
\end{proposition}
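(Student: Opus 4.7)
The plan is to partition time into the blocks $\llbracket T_k,T_{k+1}-1\rrbracket$ for $k\geq 1$ delimited by the renovation events, and to show by induction on $k$ that on each block the two processes evolve identically up to a constant shift of their front positions. Inside a block, Lemma~\ref{lemma:keyLemmaTr} gives us the exact tool we need: since $\alpha(Y^{(k)})=\alpha(X^{(T_k)},\dots,X^{(T_{k+1}-1)})\in\mathcal{T}$ by the very definition of $T_k,T_{k+1}$, for every $n\in\llbracket T_k,T_{k+1}-1\rrbracket$ the quantity
\[
y^{(k)}_n\;:=\;\mathfrak{m}\bigl(\Phi_{(X^{(T_k)},\dots,X^{(n-1)})}(\theta),X^{(n)}\bigr)-\theta_1
\]
depends only on the weights $(X^{(T_k)},\dots,X^{(n)})$ and not on $\theta\in\mathcal{N}$, and satisfies $y^{(k)}_n\geq M>0$.

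Applying this to the two configurations $\theta=\lambda^{(T_k-1)}$ and $\theta=\widetilde{\lambda}^{(T_k-1)}$, I immediately get
\[
\mathfrak{m}(\lambda^{(n-1)},X^{(n)})\;=\;\lambda^{(T_k-1)}_1+y^{(k)}_n,\qquad \mathfrak{m}(\widetilde{\lambda}^{(n-1)},X^{(n)})\;=\;\widetilde{\lambda}^{(T_k-1)}_1+y^{(k)}_n,
\]
so that the difference of the added particle positions at any time $n\in\llbracket T_k,T_{k+1}-1\rrbracket$ equals $\lambda^{(T_k-1)}_1-\widetilde{\lambda}^{(T_k-1)}_1$. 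Moreover, because $y^{(k)}_n\geq M>0$, each newly added particle sits strictly above the previous front, so
\[
\lambda^{(n)}_1=\lambda^{(T_k-1)}_1+\max_{j\in\llbracket T_k,n\rrbracket} y^{(k)}_j,\qquad \widetilde{\lambda}^{(n)}_1=\widetilde{\lambda}^{(T_k-1)}_1+\max_{j\in\llbracket T_k,n\rrbracket} y^{(k)}_j,
\]
and therefore $\lambda^{(n)}_1-\widetilde{\lambda}^{(n)}_1=\lambda^{(T_k-1)}_1-\widetilde{\lambda}^{(T_k-1)}_1$ for every $n\in\llbracket T_k,T_{k+1}-1\rrbracket$. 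Taking $n=T_{k+1}-1$ in particular gives $\lambda^{(T_{k+1}-1)}_1-\widetilde{\lambda}^{(T_{k+1}-1)}_1=\lambda^{(T_k-1)}_1-\widetilde{\lambda}^{(T_k-1)}_1$, which is exactly the induction step needed to propagate the constant $\Delta:=\lambda^{(T_1-1)}_1-\widetilde{\lambda}^{(T_1-1)}_1$ across all blocks, yielding both claimed identities for every $n\geq T_1$.

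The induction base is trivial: for $k=1$ the difference at $T_1-1$ equals $\Delta$ by definition. The fact that $(\lambda^{(n)})_{n\geq 0}$ is a standard MGS is immediate from the recursion $\lambda^{(n)}=\Phi_{X^{(n)}}(\lambda^{(n-1)})$ together with the i.i.d. property of the weight families, which is inherited from the two-sided sequence $(X^{(i)}_j)_{i\in\mathbb{Z},j\in\mathbb{N}}$. The only real subtlety is the step where I assert that the new front at time $T_{k+1}-1$ is one of the newly added particles rather than an old one from $\lambda^{(T_k-1)}$; this is exactly where the inequality $y^{(k)}_n\geq M>0$ from Lemma~\ref{lemma:keyLemmaTr} is essential, and it is the reason one cannot dispense with the positivity of $M$ in the construction. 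Everything else is bookkeeping on the block decomposition induced by the renovation times.
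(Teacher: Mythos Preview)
Your proof is correct and uses the same key tool as the paper (Lemma~\ref{lemma:keyLemmaTr}), but your block decomposition and induction over $k$ are unnecessary. The paper observes that by the very definition of $T_1\in\mathcal{R}$, the word $\alpha(X^{(T_1)},\dots,X^{(n)})$ is triangular for \emph{every} $n\geq T_1$, not just for $n\leq T_2-1$; so Lemma~\ref{lemma:keyLemmaTr} applies directly with $\theta=\lambda^{(T_1-1)}$ and $\theta=\widetilde{\lambda}^{(T_1-1)}$ to the whole half-line $\llbracket T_1,\infty)$ in one shot, yielding both identities without any induction. Your argument is not wrong, just slightly over-engineered: the later renovation times $T_2,T_3,\dots$ play no role once you have $T_1$.
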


In what follows, we call \emph{front position} the position of a particle with maximal position in a configuration.

\begin{proof}[Proof of Proposition \ref{proposition:couplingStat}]
By definition of $T_1$, $\alpha((x^{(T_1)},\dots, x^{(n)})) \in \mathcal{T}$ for all $n \geq T_1$. As a consequence of Lemma \ref{lemma:keyLemmaTr}, $\mathfrak{m}(\Phi_{(X^{(1)}, \dots , X^{(n-1)})}(\theta), X^{(n)}) - \theta_1 $ does not depends on the configuration $\theta \in \mathcal{N}$.
Then, for all $n \geq T_1$, \begin{equation*}
     \mathfrak{m}(\lambda^{(n-1)}, X^{(n)}) -  \lambda^{(T_k - 1)}_1 = \mathfrak{m}(\widetilde{\lambda}^{(n-1)}, X^{(n)}) -  \widetilde{\lambda}^{(T_k - 1)}_1.
\end{equation*} 
which completes the proof of \eqref{eq:couplingStat}.
By Lemma \ref{lemma:keyLemmaTr}, we also know that $\mathfrak{m}(\Phi_{(X^{(1)}, \dots , X^{(n-1)})}(\theta), X^{(n)}) - \theta_1 \geq M > 0 $ for all $n \geq T_1$. 
Therefore, all the particles added at a time $n \geq T_1$ in the two versions of the MGS have positions greater than the front positions at time $T_1-1$ in both MGS. In addition, all the particles added at time $n < T_1$ have positions less or equal to the front position at time $T_1-1$ for both MGS.
Then, for all $n \geq T_1$, $\lambda_1^{(n)} - \lambda_1^{(T_1 -1)} =\widetilde{\lambda}_1^{(n)} - \widetilde{\lambda}_1^{(T_1 -1)} = \max_{i \in \llbracket T_1, n \rrbracket} \mathfrak{m}(\Phi_{(X^{(1)}, \dots , X^{(n-1)})}(\theta), X^{(n)}) - \theta_1$ where $\theta$ is any element of $ \mathcal{N}$.
\end{proof}

\subsection{An abstract formula for the time constant for last passage percolation}
\label{subsec:formulaC}

In this section, we give a formula for $C((1-p)\mu + p\delta_M)$ in terms of the renovation events given in Subsection \ref{subsec:statMGS}.

For $\alpha \in \mathcal{A}$, we denote by $H(\alpha) = \sum_{i=1}^{|\alpha|} (\alpha_i - 1)$ the \emph{height} of the word $\alpha$. 
We set $\mathcal{T}_m$ 
 to be the set of all the words in $\mathcal{T}$ with no strict suffix in $\mathcal{T}$. More precisely, $\alpha \in \mathcal{T}_m$ if and only if $ \alpha \in \mathcal{T} $ and for all $i\in\llbracket 2 , |\alpha|\rrbracket$, $(\alpha_i, \dots, \alpha_{|\alpha|}) \notin \mathcal{T}$. The elements of $\mathcal{T}_m$ are called \emph{minimal triangular words}.

For $\beta \in \mathcal{T}$ and $x = (x^{(i)}_j)_{i \in \llbracket 1 , l \rrbracket, j \in \llbracket 1 , \beta_i-1 \rrbracket}$ with $x^{(i)}_j \in [-\infty, M)$ for all $(i,j)$, let $\send{M}{\beta}{x}$ be the difference of front positions between steps $l-1$ and $l$ when applying a weight sequence $x'\in (\mathcal{W}_{\leq M})^l$ such that $\alpha(x') = \beta$ and  $x'^{(i)}_{j}= x_{j}^{(i)}$ for all $1 \leq i \leq l$ and $ 1 \leq j \leq \beta_i - 1$ to any configuration $\lambda \in \mathcal{N}$. By Lemma \ref{lemma:keyLemmaTr}, this quantity depends only on $ \beta$ 
and $(x^{(i)}_j)_{i \in \llbracket 1 , l \rrbracket, j \in \llbracket 1 , \beta_i-1 \rrbracket} $.  It is clear that $\send{M}{\beta}{x} \in [0,M]$. 

More precisely, Lemma \ref{lemma:keyLemmaTr} asserts that $\mathfrak{m}(\Phi_{(x'^{(1)},\dots, x'^{(l-l)})}(\lambda), x'^{(l)})-\lambda_1 $ depends only on $ \beta $ and $(x^{(i)}_j)_{i \in \llbracket 1 , l \rrbracket, j \in \llbracket 1 , \beta_i-1 \rrbracket} $. Then, 
\begin{equation}\label{eq:send}
\send{M}{\beta}{ (x^{(i)}_j)_{i \in \llbracket 1 , l \rrbracket, j \in \llbracket 1 , \beta_i-1 \rrbracket}}= \max_{i \in \llbracket 1,l \rrbracket}y_i - \max_{i \in \llbracket 1, l-1 \rrbracket}y_i,
\end{equation}  where $y_i =\mathfrak{m}(\Phi_{(x'^{(1)},\dots, x'^{(i-l)})}(\lambda), x'^{(i)})-\lambda_1 $ for all $i\in\llbracket1 , l \rrbracket$, any configuration $\lambda \in \mathcal{N}$ and any $x' \in (\mathcal{W}_{\leq M})^l$ such that $\alpha(x') = \beta$ and $x'^{(i)}_{j}= x_{j}^{(i)}$ for all $1 \leq i \leq l$ and $ 1 \leq j \leq \beta_i - 1$.

\begin{proposition}\label{proposition:formuleMGS}
For all $p\in (0,1]$, $M\in \R^*_+$ and $\mu \in \Mof{[-\infty , M)}$, 
\begin{equation*}
    C((1-p)\mu + p\delta_M) = \sum_{\beta \in \mathcal{T}_m}p^{|\beta|}(1-p)^{H(\beta)}\int \send{M}{\beta}{ (x_{j}^{(i)})_{i,j}} \prod_{\substack{ 1 \leq i \leq |\beta|\\ 1 \leq j \leq \beta_i -1}} d\mu(x^{(i)}_{j}).
\end{equation*}
\end{proposition}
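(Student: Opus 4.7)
The plan is to compute $C(\nu)$ as the expected front increment per step in the stationary MGS from Proposition~\ref{proposition:stationaryMGS}. Define $\Delta_k := \widetilde{\lambda}_1^{(k)} - \widetilde{\lambda}_1^{(k-1)}$; since every added particle lies in $(-\infty,\widetilde{\lambda}_1^{(k-1)} + M]$ and the front is nondecreasing, one has $\Delta_k \in [0,M]$. By Proposition~\ref{proposition:couplingStat} combined with \eqref{eq:convCouplingFK}, $n^{-1}\widetilde{\lambda}_1^{(n)} \to C(\nu)$ almost surely. Since $\widetilde{\lambda}_1^{(0)} = 0$ (assumption \ref{enum:2}), $\widetilde{\lambda}_1^{(n)} = \sum_{k=1}^n \Delta_k$; and as $(\Delta_k)_{k\in\Z}$ is stationary (Remark~\ref{remark:stat}) and ergodic (a measurable functional of the shift on the i.i.d.\ array $(X^{(i)}_j)$), Birkhoff's ergodic theorem yields $n^{-1}\widetilde{\lambda}_1^{(n)} \to \E[\Delta_1]$. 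Therefore $C(\nu) = \E[\Delta_1] = \E[\Delta_0]$ by stationarity.

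Next, I would describe $\Delta_0$ in terms of the \emph{minimal triangular word ending at time $0$} in $(\xi_k)_{k\in\Z}$. By Lemma~\ref{lemma:TpastFinite}, $\mathcal{R}$ is a.s.\ unbounded below, so there exists $L \geq 1$ with $(\xi_{-L+1},\dots,\xi_0) \in \mathcal{T}$. Let $L_0$ be the smallest such $L$ and set $\beta^{(0)} := (\xi_{-L_0+1},\dots,\xi_0)$. Then $\beta^{(0)} \in \mathcal{T}_m$: any strict suffix would be a shorter triangular suffix ending at $0$, contradicting minimality of $L_0$. Applying Lemma~\ref{lemma:keyLemmaTr} to the sequence $(X^{(-L_0+1)},\dots,X^{(0)}) \in (\mathcal{W}_{\leq M})^{L_0}$ (whose associated word $\beta^{(0)}$ is triangular) starting from $\widetilde{\lambda}^{(-L_0)}$, each $y_i \geq M > 0$, so by \eqref{eq:send} the difference of the cumulative maxima identifies
\[
\Delta_0 = \send{M}{\beta^{(0)}}{\bigl(X^{(i)}_j\bigr)_{-L_0+1 \leq i \leq 0,\; 1 \leq j \leq \xi_i - 1}}.
\]

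To conclude, I would partition on the value of $\beta^{(0)}$:
\[
C(\nu) = \E[\Delta_0] = \sum_{\beta \in \mathcal{T}_m} \E\bigl[\send{M}{\beta}{\cdot}\,\mathbf{1}_{\{\beta^{(0)}=\beta\}}\bigr].
\]
The key observation is that for $\beta \in \mathcal{T}_m$ of length $L$, the event $\{\beta^{(0)} = \beta\}$ coincides a.s.\ with $\{(\xi_{-L+1},\dots,\xi_0) = \beta\}$: the forward implication is by definition, while the converse uses that $\beta$ has no strict suffix in $\mathcal{T}$, so no shorter triangular suffix ending at $0$ can exist once $(\xi_{-L+1},\dots,\xi_0) = \beta$. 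Since the $\xi_i$ are i.i.d.\ with $\Proba(\xi_i = k) = p(1-p)^{k-1}$,
\[
\Proba\bigl((\xi_{-L+1},\dots,\xi_0) = \beta\bigr) = p^{|\beta|}(1-p)^{H(\beta)};
\]
moreover, conditionally on this event, the entries $X^{(i)}_j$ with $1 \leq j < \beta_{i+L}$ are i.i.d.\ with distribution $\mu$ (the conditional law of a $\nu$-distributed variable given that it differs from $M$, using $\mu([-\infty,M)) = 1$). Substituting and reindexing $i \in \llbracket 1,L\rrbracket$ yields the claimed formula.

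The main conceptual obstacle is the equivalence $\{\beta^{(0)} = \beta\} = \{(\xi_{-L+1},\dots,\xi_0) = \beta\}$, which is precisely what makes summing over $\mathcal{T}_m$ (rather than over all of $\mathcal{T}$, where overcounting would occur) the correct combinatorial indexing. A secondary technical point is the ergodicity argument needed to convert the almost sure limit $C(\nu)$ into the expectation $\E[\Delta_0]$, but this follows routinely from the product structure of the weight space.
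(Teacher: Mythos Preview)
Your proof is correct and follows essentially the same approach as the paper: identify $C(\nu)$ with the expected one-step front increment of the stationary MGS, then decompose that expectation over the minimal triangular word ending at the current time. The only cosmetic differences are that you work at time $0$ rather than time $1$ and invoke Birkhoff's ergodic theorem to pass from the almost-sure limit to $\E[\Delta_0]$, whereas the paper uses the $L^1$ convergence in \eqref{eq:convCouplingFK} together with stationarity to reach $C(\nu)=\E[\widetilde{\lambda}_1^{(1)}]$ directly.
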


\begin{proof}
By \eqref{eq:convCouplingFK}, for $(\lambda^{(n)})_{n \in \Z_+}$ an MGS with a starting configuration $ \lambda^{(0)}$, we know that $ (n^{-1}\lambda_1^{(n)})$ converges to $ C(\nu)$ almost surely and in $L^1$ as $n$ tends to infinity. Consider  $\widetilde{\lambda}$ the stationary MGS introduced in Proposition \ref{proposition:stationaryMGS}.
Since $n^{-1}(\lambda^{(T_1 - 1)}_1 - \widetilde{\lambda}^{(T_1 - 1)}_1)$ converges almost surely and $L^1$ to $0$ as $n$ tends to infinity by Proposition \ref{proposition:couplingStat}, we also have  
\begin{align*}
    \frac{\widetilde{\lambda}_1^{(n)}}{n} \xrightarrow[n \rightarrow \infty]{\text{a.s, } L^1} C(\nu).
\end{align*}  
Then, by $L^1$ convergence,
\begin{align}\label{eq:proofLimCnu}
    \frac{1}{n}\E\left[\widetilde{\lambda}_1^{(n)}\right] \xrightarrow[n \rightarrow \infty]{} C(\nu).
\end{align} 
By stationarity of $(\widetilde{\lambda}^{(n)})_{n \in \Z}$ (see Remark \ref{remark:stat}), $\E[\widetilde{\lambda}^{(k+1)}_1- \widetilde{\lambda}^{(k)}_1] = \E[\widetilde{\lambda}^{(1)}_1- \widetilde{\lambda}^{(0)}_1] = \E[\widetilde{\lambda}^{(1)}_1] $ for all $k \in \Z$. Therefore, $$ \E\left[\widetilde{\lambda}_1^{(n)}\right] = \sum_{k = 0}^{n-1} \E\left[\widetilde{\lambda}^{(k+1)}_1- \widetilde{\lambda}^{(k)}_1\right] = n \E\left[\widetilde{\lambda}^{(1)}_1\right]. $$
As a consequence, by \eqref{eq:proofLimCnu}, $  C(\nu) =\E\left[ \widetilde{\lambda}^{(1)}_1\right].$
Now, we rewrite this formula in terms of triangular words and weights sequences. We set $$\Tpast = \inf\Set{t \in \N}{( X^{(-t+2)}, \dots ,X^{(1)} )\in\mathcal{T}}.$$ By definition of $\Tpast$, we have $\alpha( (X^{(2-\Tpast)}, \dots, X^{(1)})) \in \mathcal{T}_m$. Then, by the law of total probability 
\begin{align}\label{eq:proofFormulaCnu3}
    C(\nu) &= \sum_{\beta \in \mathcal{T}_m}    \Proba( \alpha((X^{(2-\Tpast)}, \dots, X^{(1)})) = \beta) \E\left[  \widetilde{\lambda}^{(1)}_1 \mid \alpha((X^{(2-\Tpast)}, \dots, X^{(1)})) = \beta  \right] \notag \\
    &= \sum_{\beta \in \mathcal{T}_m}   p^{|\beta|}(1-p)^{H(\beta)} \E\left[  \widetilde{\lambda}^{(1)}_1 \mid \alpha((X^{(2-\Tpast)}, \dots, X^{(1)})) = \beta  \right].
\end{align}

Conditionally to the event $\{\alpha((X^{(2-\Tpast)}, \dots, X^{(1)})) = \beta\}$, $(X_j^{(i)})_{i \in \llbracket 1 , |\beta| \rrbracket, j \in \llbracket 1 , \beta_i -1\rrbracket}$ are i.i.d. with distribution $\mu$. Therefore, by definition of $s^{\text{end}}_M$,
\begin{equation}\label{eq:proofFormulaCnu2}
    \E\left[  \widetilde{\lambda}^{(1)}_1 \mid \alpha((X^{(2-\Tpast)}, \dots, X^{(1)})) = \beta  \right] = \int \send{M}{\beta}{(x^{(i)}_{j})_{i,j}} 
    \prod_{\substack{ 1 \leq i \leq |\beta|\\ 1 \leq j \leq \beta_i -1}} d\mu(x^{(i)}_{j}).
\end{equation} 

Putting everything together, we obtain the formula from Proposition \ref{proposition:formuleMGS}, which completes the proof of the proposition.
\end{proof}

\section{Regularity properties of the time constant}
\label{sec:regularity}

The four subsections are devoted respectively to the proofs of Theorem \ref{theorem:analyticCpmu}, Theorem \ref{theorem:analyticityFiniteSupport}, Theorem  \ref{theorem:continuity} and Theorem \ref{theorem:strictIncreasing}.

\subsection{Analyticity in \texorpdfstring{$p$}{TEXT} for measures of the form \texorpdfstring{$(1-p)\mu + p\delta_M$}{TEXT}}
\label{subsec:proofAnaliticCpmu}

With the results from Section \ref{sec:stationarity}, we now prove Theorem \ref{theorem:analyticCpmu} regarding the analyticity of $p \mapsto C((1-p)\mu + p\delta_M)$ on $(0,1]$.

\begin{proof}[Proof of Theorem \ref{theorem:analyticCpmu}]
We fix $M\in \R^*_+$. We denote by $ D(z_0,r) := \Set{z \in \CC}{|z-z_0| < r}$ the open disk of radius $r$ and center $z_0$.
By Proposition \ref{proposition:formuleMGS}, it suffices to show that for all $q_0 \in [0,1)$, there exists a $\delta > 0 $ such that for all $z \in D(0,\delta )$, 
\begin{align*}
    \sum_{\beta \in  \mathcal{T}_m} |q_0 + z|^{H(\beta)}|1-q_0-z|^{|\beta|} \int \left|\send{M}{\beta}{(x^{(i)}_{j})_{i,j}}\right| \prod_{\substack{ 1 \leq i \leq |\beta|\\ 1 \leq j \leq \beta_i -1}} d\mu(x^{(i)}_{j}).
\end{align*}

We consider $q_0 \in [0,1)$ and $z \in \CC$. Since $\mu$ is a probability distribution and $\send{M}{\beta}{(x^{(i)}_{j})_{i,j}}$ takes values in $[0,M]$, it suffices to show that for some $\delta >0$ and all $z \in D(0, \delta)$,
\begin{align}\label{eqToCheckAnalyticity}
    \sum_{\beta \in  \mathcal{T}_m} |q_0 + z|^{H(\beta)}|1-q_0-z|^{|\beta|}   < \infty. 
\end{align}
By the triangle inequality, we have 
\begin{align*}
    \sum_{\beta \in  \mathcal{T}_m} |q_0 + z|^{H(\beta)}|1-q_0-z|^{|\beta|} &\leq \sum_{\beta \in  \mathcal{T}_m} (q_0 + |z|)^{H(\beta)}(1-q_0+|z|)^{|\beta|}.
\end{align*}

For $z$ such that $1-q_0 - |z| > 0$, we have 
\begin{align*}
    \sum_{\beta \in  \mathcal{T}_m} |q_0 + z|^{H(\beta)}|1-q_0-z|^{|\beta|} 
    &\leq \sum_{\beta \in \mathcal{T}_m} (q_0 + |z|)^{H(\beta)}(1-q_0-|z|)^{|\beta|}  \left(\frac{1-q_0+|z|}{1-q_0-|z|}\right)^{|\beta|}.
\end{align*}

For $p \in (0,1]$ and $\mu$ a probability distribution such that $\mu([-\infty,M))=1$, we denote by $\E_{p,\mu}$ the expectation corresponding to a probability space where the weights $X_j^{(i)}$ have distribution $ (1-p) \mu +  p \delta_M$. 
For $r>1$, we notice that  
\begin{align*}
     \E_{p,\mu} \left[ r^{\Tpast} \right]  = \sum_{\beta \in \mathcal{T}_m} (rp)^{|\beta|}(1-p)^{H(\beta)}.
\end{align*} 
by the law of total probability applied to $\alpha((X^{(2-\Tpast)}, \dots , X^{(1)}))$. Therefore,
\begin{align}\label{step2analyticity}
    \sum_{\beta \in  \mathcal{T}_m} |q_0 + z|^{H(\beta)}|1-q_0-z|^{|\beta|} 
    &\leq \E_{1-q_0 - |z|,\mu}\left[ \left(\frac{1-q_0+|z|}{1-q_0-|z|}\right)^{\Tpast} \right].
\end{align}

If we show that the moment generating function of $\Tpast$ is finite on an open neighborhood of $1$, then we can show that both quantities in \eqref{step2analyticity} are finite for $|z|$ small enough. 
To do so, we introduce $\xi_n = \inf\{i \in \N, X^{(n)}_i = M\}$ for all $n \in \Z$. We consider the random sequence $(M_n)_{n \geq 0}$, where $M_n$ corresponds to the number of elements $i$ in $\llbracket -n+2 , 1 \rrbracket$ such that $\xi_{i} > i+n-1$. 
More explicitly, we consider $\mathcal{M}_0 = \varnothing$, and for all $n \geq 1$, $$\mathcal{M}_n = \Set{i \in \llbracket -n+2, 1 \rrbracket}{ \xi_{i} > i+n-1}. $$ 
For $n \in \Z_+$, we set $M_n = \#\mathcal{M}_n$.
By definition of $\Tpast$, we notice that $$\Tpast = \inf \Set{n \in \N}{ M_n = 0}.$$

See Figure \ref{fig:10} for an illustration. 
Equivalently, by definition of $(\xi_i)_{i \in \Z}$, for all $n \in \N$,
$$\mathcal{M}_n = \Set{i \in \llbracket -n+2, 1 \rrbracket}{ \forall j \in \llbracket 1, i+n-1\rrbracket, X_{j}^{(i)} < M}.$$ 

We also notice that for all $n \geq 1$, $$ \mathcal{M}_n = \{i \in \{-n+2\}\cup\mathcal{M}_{n-1}, X^{(i)}_{i+n-1} < M\} .$$ Since $\mathcal{M}_{n-1}$ depends only on $(X^{(i)}_{j})_{ i \in \llbracket -n+1 , 1 \rrbracket, j \in \llbracket 1 , i+n-2 \rrbracket}$ which is independent of $(X^{(i)}_{i+n-1})_{i \in \llbracket -n+2,1\rrbracket}$, the conditional distribution of $M_n$ given $M_{n-1}$ is a binomial distribution $\Bin(M_{n-1}+1, 1-p)$. 

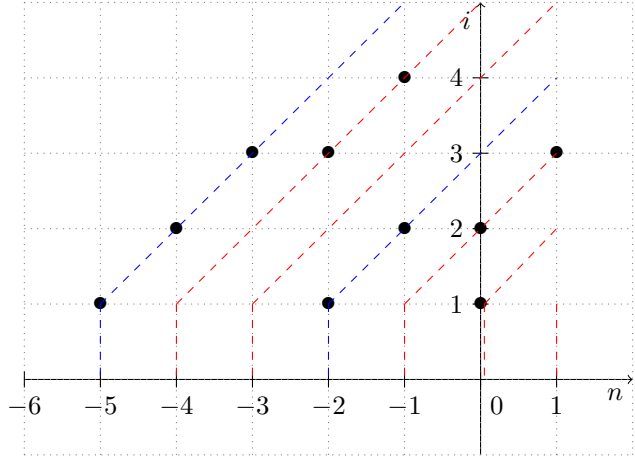
\begin{figure}
    \centering
    \begin{tikzpicture}
    \draw[->] (-6,0)--(2,0) node[anchor=north east] {$n$}; 
    \draw[->] (0,-1)--(0,5) node[anchor=north east] {$i$}; 
    \draw [dotted, gray] (-6,-1) grid (2,5);

    \foreach \x in {-6,-5,-4,-3,-2,-1,1}
        \draw (\x,3pt) -- (\x,-3pt)
            node[anchor=north] {$\x$};
    \draw (0,3pt) -- (0,-3pt)
            node[anchor=north west] {$0$};
    \foreach \y in {1,2,3,4}
        \draw (3pt,\y) -- (-3pt,\y)
            node[anchor=east] {$\y$};

    \foreach \Point in {(1,3), (0,1), (-1,2),(-2,1),(-2,3),(-3,3),(-1,4),(0,2),(-4,2),(-5,1)}{
    \node at \Point {\large\textbullet};
}

    \draw[red, dashed] (1,1)--(1,0);
    
    \draw[red, dashed] (0.05,1)--(1,2);
    \draw[red, dashed] (0.05,1)--(0.05,0);
    
    \draw[red, dashed] (-1,1)--(1,3);
    \draw[red, dashed] (-1,1)--(-1,0);
    
    \draw[blue, dashed] (-2,1)--(1,4);
    \draw[blue, dashed] (-2,1)--(-2,0);

    \draw[red, dashed] (-3,1)--(1,5);
    \draw[red, dashed] (-3,1)--(-3,0);

    \draw[red, dashed] (-4,1)--(0,5);
    \draw[red, dashed] (-4,1)--(-4,0);

    \draw[blue, dashed] (-5,1)--(-1,5);
    \draw[blue, dashed] (-5,1)--(-5,0);
    
\end{tikzpicture}
    \caption{A dot is represented at position $(n,i)$ if and only if $X^{(n)}_i=M$. Notice that $(\xi_k, \dots , \xi_0, \xi_1)$ is triangular if and only if, for all $i \in \llbracket k,1\rrbracket$, there is at least one dot with abscissa $i$ below or on the dashed line passing through $(k,1)$. 
    For $n \in \N$, $\mathcal{M}_n$ is the set of all $i\in \llbracket 2-n,1\rrbracket$ for which there is no dot with abscissa $i$ below or on the dashed line passing through $(2-n,1)$. 
    Here, $\mathcal{M}_1 = \mathcal{M}_2= \{1\}$, $\mathcal{M}_{3} = \{-1\}$, $ \mathcal{M}_{4} =  \mathcal{M}_{7}  = \varnothing$, $\mathcal{M}_{5} = \{-4,-3 \}$ 
    and $\mathcal{M}_{6} = \{-4\}$. 
    Then, $\Tpast= 4 $.} 
    \label{fig:10}
\end{figure}

We notice that $M_n $ has the same law as a Galton-Watson process with immigration, with distribution $\Ber(1-p)$ for the number of offspring of one particle and with distribution $\Ber(1-p)$ for the immigration at each step. In \cite{MR0300351}, Zubkov shows the following result :
\begin{theorem}[{\cite[Theorem 1]{MR0300351}}]\label{theorem:Zubkov}
For any $ p \in (0,1)$, there exists a constant $c$ such that for all $k > 0$,  $$\Proba ( \Tpast > k ) \sim c (r_p)^{-(k+1)} ,$$ where $r_p$ is the only element $r$ of $(1,\frac{1}{1-p})$ such that $\sum_{k=0}^{\infty}(qr)^{k+1}\prod_{n=1}^{k} (1-q^n) = 1 $ with $q:=1-p$.
\end{theorem}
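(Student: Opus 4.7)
The plan is to set up generating functions for the Markov chain $(M_n)$ and the first-zero time $\Tpast$, and then locate the dominant singularity of the tail generating function $A(z) := \sum_{n \ge 0} \Proba(\Tpast > n) z^n$.

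First, since $M_n \mid M_{n-1} \sim \Bin(M_{n-1}+1, q)$ with $M_0 = 0$, the probability generating function $G_n(s) := \E[s^{M_n}]$ satisfies $G_n(s) = f(s)\,G_{n-1}(f(s))$ with $f(s) = p + qs$; iterating yields $G_n(s) = \prod_{j=1}^n f^{(j)}(s)$, where the $j$-th iterate is $f^{(j)}(s) = (1-q^j) + q^j s$. Evaluating at $s = 0$ gives the explicit formula $c_n := \Proba(M_n = 0) = \prod_{j=1}^n (1-q^j)$.

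Next, decomposing the event $\{M_n = 0\}$ according to the value of $\Tpast \le n$ and applying the strong Markov property at $\Tpast$ gives the renewal equation $c_n = \sum_{k=1}^n b_k\, c_{n-k}$ for $n \ge 1$, with $c_0 = 1$ and $b_k := \Proba(\Tpast = k)$. Setting $C(z) := \sum_{n \ge 0} c_n z^n$ and $B(z) := \sum_{k \ge 1} b_k z^k$, this translates into $C(z)(1 - B(z)) = 1$, hence
\[
A(z) \;=\; \frac{1 - B(z)}{1 - z} \;=\; \frac{1}{(1-z)\,C(z)}.
\]

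The crux is to locate the smallest singularity of $A$. From the recursion $c_n = (1 - q^n)\,c_{n-1}$, summing $\sum_{n\ge 1}(c_n - c_{n-1}) z^n$ yields the functional equation
\[
(1-z)\,C(z) \;=\; 1 - qz\,C(qz),
\]
which extends $C$ meromorphically from $|z| < 1$ to $|z| < 1/q$, with a simple pole at $z = 1$ (the limit $(1-z)C(z) \to \phi(q) := \prod_{j \ge 1}(1-q^j) \in (0,1)$ as $z \to 1^-$ fixes the residue). Any zero $z_0 \ne 1$ of $C$ in $|z| < 1/q$ forces $qz_0\,C(qz_0) = 1$. On the real interval $r \in [1, 1/q)$, the map $g(r) := qr\,C(qr) = \sum_{k \ge 0}(qr)^{k+1} c_k$ is continuous and strictly increasing in $r$ (its coefficients are positive), with $g(1) = 1 - \phi(q) < 1$ and $g(r) \to +\infty$ as $r \to 1/q$; the intermediate value theorem yields a unique $r_p \in (1, 1/q)$ with $g(r_p) = 1$, which is precisely the defining condition in the statement. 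Differentiating the functional equation at $r_p$ shows $C'(r_p) \ne 0$, so $r_p$ is a simple zero of $C$ and a simple pole of $A$.

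The main obstacle will be proving that $r_p$ is the \emph{dominant} singularity, i.e.\ the unique singularity of $A$ in $\{|z| \le r_p\}$: one must rule out further zeros of $C$ on the circle $|z| = r_p$ and in its interior. I would attempt a Pringsheim-type argument, exploiting positivity of the coefficients $c_n$ together with strict triangle inequalities, possibly leveraging the functional equation to trade the problem on $|z| = r_p$ for one on $|z| = qr_p < 1$ where the defining series of $C$ converges absolutely. Once this is achieved, classical pole extraction ($[z^n]\bigl(\alpha/(z-r_p)\bigr) = -\alpha\, r_p^{-(n+1)}$) yields
\[
\Proba(\Tpast > n) \;=\; \frac{1}{(r_p - 1)\,C'(r_p)}\, r_p^{-(n+1)} + O(R^{-n})
\]
for some $R > r_p$, which proves Zubkov's theorem with constant $c = 1/\bigl((r_p - 1)\,C'(r_p)\bigr)$.
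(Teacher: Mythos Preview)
The paper does not prove this statement: Theorem~\ref{theorem:Zubkov} is quoted verbatim from Zubkov's article \cite{MR0300351} and used as a black box, so there is no ``paper's own proof'' to compare against. Your proposal is therefore an independent derivation, and it is essentially correct.

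Your generating-function setup is sound. The identities $c_n = \prod_{j=1}^n (1-q^j)$, the renewal equation $C(z)(1-B(z))=1$, the functional equation $(1-z)C(z)=1-qzC(qz)$, and the identification $A(z)=1/\bigl((1-z)C(z)\bigr)=1/(1-qzC(qz))$ are all valid. The existence and uniqueness of $r_p\in(1,1/q)$ via the intermediate value theorem applied to $g(r)=qrC(qr)$ is exactly right, and the simplicity of the zero follows since $h(z):=1-qzC(qz)$ has $h'(r_p)=-qC(qr_p)-q^2r_pC'(qr_p)<0$ (both terms are strictly positive because $qr_p\in(0,1)$ lies in the domain of the power series with positive coefficients).

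The obstacle you flag---dominance of $r_p$---is in fact easy with the tools you already have. For $|z|\le r_p$ one has $|qzC(qz)|\le q|z|\,C(q|z|)=g(|z|)\le g(r_p)=1$ by positivity of the coefficients $c_n$ and monotonicity of $g$; equality in the triangle inequality $|\sum c_n(qz)^{n+1}|\le\sum c_n(q|z|)^{n+1}$ forces all $(qz)^{n+1}$ to share a common argument, hence $z>0$, hence $z=r_p$. Thus $r_p$ is the unique zero of $h$ in the closed disk $\overline{D(0,r_p)}$, and analyticity of $h$ on $D(0,1/q)$ provides some $R\in(r_p,1/q)$ with no further zeros in $D(0,R)$. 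Your pole extraction then gives the claimed asymptotic with $c=\bigl((r_p-1)C'(r_p)\bigr)^{-1}=\bigl(-h'(r_p)\bigr)^{-1}>0$.
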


As a consequence of Theorem \ref{theorem:Zubkov}, $\Tpast$ has finite exponential moments. More precisely:

\begin{lemma}\label{lemma:TExpMoment}
For all $p \in (0,1]$, there exists $\rho_{p} >1$ such that for all $p' \in [p,1]$ and $\mu \in \Mof{[-\infty,M)}$,
\[\E_{p',\mu}[ \rho_{p}^{\Tpast}]< \infty.\]
\end{lemma}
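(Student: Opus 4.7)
The plan is to first reduce the statement to a purely probabilistic one about the Bernoulli marking, and then combine a monotone coupling with the tail estimate from Theorem \ref{theorem:Zubkov}. The key observation is that $\Tpast$ is defined through $\xi_n=\inf\{i\in\N, X^{(n)}_i=M\}$, so it is a measurable function of the indicators $(\mathbf{1}_{X^{(i)}_j=M})_{i\leq 1, j\geq 1}$ alone. Under $\Proba_{p',\mu}$, these indicators are i.i.d.\ $\Ber(p')$ regardless of $\mu$, so the law of $\Tpast$ under $\Proba_{p',\mu}$ depends only on $p'$. This immediately removes $\mu$ from the problem and reduces the lemma to showing: there exists $\rho_p>1$ such that $\sup_{p'\in[p,1]}\E_{p'}[\rho_p^{\Tpast}]<\infty$.

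Next, I would set up a monotone coupling in $p'$. Introduce i.i.d.\ uniform random variables $U^{(i)}_j$ on $[0,1]$ and, for each $q\in[0,1]$, define the marked positions by $\mathbf{1}_{X^{(i)}_j=M}(q):=\mathbf{1}_{U^{(i)}_j\leq q}$. For $q\leq q'$, the set of marked positions under $q$ is contained in that under $q'$, so the first-mark times satisfy $\xi_n(q')\leq\xi_n(q)$ for every $n\leq 1$. Since the triangularity condition defining $\Tpast$ only becomes easier when the $\xi_n$'s get smaller, the coupling yields $\Tpast(q')\leq \Tpast(q)$ almost surely whenever $q\leq q'$. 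Hence $\E_{p'}[\rho^{\Tpast}]\leq\E_p[\rho^{\Tpast}]$ for every $\rho>1$ and every $p'\in[p,1]$.

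It now suffices to fix $\rho_p$ so that $\E_p[\rho_p^{\Tpast}]<\infty$. For $p\in(0,1)$, Theorem \ref{theorem:Zubkov} gives $r_p>1$ and $c>0$ with $\Proba_p(\Tpast>k)\sim c\, r_p^{-(k+1)}$ as $k\to\infty$. Choosing any $\rho_p\in(1,r_p)$, one has
\begin{equation*}
\E_p\bigl[\rho_p^{\Tpast}\bigr]=\sum_{k\geq 1}\rho_p^k\,\Proba_p(\Tpast=k)\leq \sum_{k\geq 1}\rho_p^k\,\Proba_p(\Tpast>k-1),
\end{equation*}
which is finite by comparison with the geometric series of ratio $\rho_p/r_p<1$. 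The degenerate case $p=1$ is trivial since then $\Tpast=1$ almost surely, so any $\rho_1>1$ works.

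There is no real obstacle here: the only non-elementary input is the tail estimate of Zubkov, and the rest of the argument is a standard stochastic-monotonicity coupling together with the observation that the $\mu$-dependence in $\E_{p',\mu}$ is spurious for this particular functional of the weights. The only small point to check carefully is that the triangular condition is indeed monotone in the mark set, which is immediate from its formulation $\xi_{n+i-1}\leq i$.
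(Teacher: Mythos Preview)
Your proof is correct and follows essentially the same approach as the paper's: both use Theorem \ref{theorem:Zubkov} to pick $\rho_p\in(1,r_p)$ and then invoke the trivial (monotone) coupling to get $\Tpast$ non-increasing in $p'$, with the case $p=1$ handled separately. Your explicit remark that the law of $\Tpast$ under $\Proba_{p',\mu}$ depends only on $p'$ (not on $\mu$) is a nice clarification that the paper leaves implicit.
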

\begin{proof}[Proof of Lemma \ref{lemma:TExpMoment}]
For $p=1$, the result is trivial since $\Tpast=1$ almost surely. Now, we assume that $p\in(0,1)$. By Theorem \ref{theorem:Zubkov}, for any $1 < \rho_p < r_p$, 
$\E_{p,\mu}[ \rho_{p}^{\Tpast}]< \infty$. In addition, $\Tpast$ is a non-increasing function of $p$ for the trivial coupling. Therefore, $\E_{p',\mu}[ \rho_{p}^{\Tpast}]< \infty$ for all $p' \in [p,1]$, since $r_p > 1$. 
\end{proof}

Now, we consider $1<r<r_{\frac{1-q_0}{2}}$ from Lemma \ref{lemma:TExpMoment}. Setting $\delta= \min ( \frac{1-q_0}{2}, \frac{(1-q_0)(r-1)}{r+1} )$ and $ z \in D(0, \delta)$, we have $\E_{1-q_0-|z|,\mu}[ r^{\Tpast}]< \infty $ by Lemma \ref{lemma:TExpMoment} since $|z| < \frac{1-q_0}{2}$.
Furthermore, $\frac{1-q_0+|z|}{1-q_0-|z|} < r$ because $|z| < \frac{(1-q_0)(r-1)}{r+1} $. Then, we have $\eqref{eqToCheckAnalyticity}$  by \eqref{step2analyticity}, which concludes the proof of Theorem \ref{theorem:analyticCpmu}.
\end{proof}

\begin{remark}\label{remark:radius}
There is no easy way to compute $r_p$ explicitly, but we can approximate its value. We can obtain a lower bound for the radius of convergence of $p \mapsto C((1-p)\mu + p \delta_M)$. This lower bound is valid for all $\mu \in \Mof{[-\infty,M)}$ 
since $\Tpast$ does not depend on $\mu$. Let us consider $\varepsilon \in (0,1), q_0 \in [0,1)$ and $r = r_{\varepsilon(1-q_0)} $. One can show by similar computations as above that $\delta=\min ( (1-q_0)(1- \varepsilon), \frac{(1-q_0)(r-1)}{r+1} ) $ is also a lower bound for the radius of convergence of $p \mapsto C((1-p)\mu + p \delta_M)$ around $1-q_0$. For $q_0=0$, we can numerically optimize this lower bound in $\varepsilon$. We obtain that the radius of convergence around $1$ is lower bounded by $0.298167$. This lower bound does not seem to be optimal according to numerical simulations, but it is a slight improvement over the previously known lower bound $\frac{\sqrt{2}-1}{2} \simeq 0,20710678$ from \cite{MaRa2016} for the case where $\mu=\delta_{-\infty}$.  
\end{remark}

\subsection{Analyticity for measures with \texorpdfstring{$N$}{TEXT} atoms}
\label{subsec:AnalyticitySeveralVar}

We first simplify the formula from Proposition \ref{proposition:formuleMGS} in the specific case where $\nu$ has finite support:

\begin{proposition}
We consider $N$ weights $a_1  > a_2 > \dots > a_N \geq - \infty$ such that $a_1 > 0$, and $p_1, \dots, p_N \in [0,1]$ such that $p_1 + \dots + p_N =1$ and $p_1 >0$. Then,
\begin{equation}\label{eq:CmultFormula}
    C\left(\sum_{i=1}^N p_i \delta_{a_i}\right) =  \sum_{\beta \in \mathcal{T}_m}   
    \sum_{x \in E_{\beta}} \send{a_1}{\beta}{x}   p_1^{|\beta|} p_2^{c_{2,x}} \dots p_N^{c_{N,x}} ,
\end{equation}
where $E_{\beta}  = \Set{(x_j^{(i)})_{i\in\llbracket 1 , |\beta| \rrbracket , j \in \llbracket 1 , \beta_i - 1  \rrbracket}}{ x_j^{(i)} \in \{a_2, \dots a_N\}}  $ and  $c_{k,x} = |\Set{(i,j) \in \llbracket 1 , |\beta| \rrbracket\times \llbracket 1 , \beta_i - 1  \rrbracket}{ x_{j}^{(i)}=a_k}|$ for all $k \in \llbracket 1 , N\rrbracket$ and $x \in E_{\beta}$.
\end{proposition}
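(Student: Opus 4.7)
The plan is to derive \eqref{eq:CmultFormula} by specializing Proposition \ref{proposition:formuleMGS} to the discrete case $\nu = \sum_{i=1}^N p_i\delta_{a_i}$. Since $a_1 > 0$ and $p_1 > 0$, I set $M := a_1 \in \R^*_+$ and $p := p_1 \in (0,1]$. When $p_1 < 1$, I define the probability measure $\mu := \frac{1}{1-p_1}\sum_{k=2}^N p_k\delta_{a_k}$, which is supported in $[-\infty,a_1)$, so that $\nu = (1-p_1)\mu + p_1\delta_{a_1}$ and Proposition \ref{proposition:formuleMGS} applies directly.

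The heart of the argument is to unwind the integral against $\mu$. Since $\mu$ is purely atomic with atoms in $\{a_2,\dots,a_N\}$ of masses $\frac{p_k}{1-p_1}$, for each $\beta \in \mathcal{T}_m$ the integral reduces to a finite sum indexed by $E_\beta$:
\begin{equation*}
\int \send{a_1}{\beta}{(x_j^{(i)})_{i,j}} \prod_{\substack{1 \leq i \leq |\beta| \\ 1 \leq j \leq \beta_i -1}} d\mu(x_j^{(i)}) = \sum_{x \in E_\beta} \send{a_1}{\beta}{x} \prod_{k=2}^N \left(\frac{p_k}{1-p_1}\right)^{c_{k,x}}.
\end{equation*}
The key combinatorial observation I will invoke is the identity $\sum_{k=2}^N c_{k,x} = H(\beta)$, which simply reflects that each of the $H(\beta) = \sum_i(\beta_i - 1)$ letter positions receives exactly one value in $\{a_2,\dots,a_N\}$. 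Thanks to this identity, the factor $(1-p_1)^{-H(\beta)}$ produced by the atomic masses cancels exactly with the prefactor $(1-p_1)^{H(\beta)}$ coming from Proposition \ref{proposition:formuleMGS}, leaving the clean expression \eqref{eq:CmultFormula}.

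The boundary case $p_1 = 1$ must be handled separately since $\mu$ is then undefined. It reduces to a triviality: on the claimed right-hand side, since $p_k = 0$ for every $k \geq 2$, only terms with $c_{k,x} = 0$ for all $k \geq 2$ survive, forcing $H(\beta) = 0$ and hence $\beta = (1)$, because any minimal triangular word of length $\geq 2$ has at least one $\beta_i \geq 2$ (otherwise the all-ones suffix $(1,\dots,1)$ would itself be triangular, violating minimality). Since $\send{a_1}{(1)}{\varnothing} = a_1$, this recovers $a_1 = C(\delta_{a_1})$, as required. I do not foresee any genuine obstacle here: all the substantive analytic and combinatorial work has already been absorbed in Proposition \ref{proposition:formuleMGS}, so what remains is the bookkeeping verification of the $(1-p_1)^{H(\beta)}$ cancellation, which I expect to be the only step requiring care.
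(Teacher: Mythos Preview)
Your proof is correct and follows essentially the same route as the paper: apply Proposition~\ref{proposition:formuleMGS} with $M=a_1$, $p=p_1$, $\mu=\frac{1}{1-p_1}\sum_{k\geq 2}p_k\delta_{a_k}$, expand the product measure as a sum over $E_\beta$, and use $\sum_{k\geq 2}c_{k,x}=H(\beta)$ to cancel the $(1-p_1)^{H(\beta)}$ factor. Your separate treatment of the boundary case $p_1=1$ is in fact more careful than the paper, which tacitly assumes $\mu$ is well-defined.
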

\begin{proof}
We apply Proposition \ref{proposition:formuleMGS} with $p = p_1$ and $\mu = \frac{1}{1-p_1} \sum_{i=2}^N p_i \delta_{a_i}$. We obtain 
\begin{align*}
    C\left(\sum_{i=1}^N p_i \delta_{a_i}\right) &= \sum_{\beta \in \mathcal{T}_m} p_1^{|\beta|}(1-p_1)^{H(\beta)} \int \send{a_1}{\beta}{(x_{j}^{(i)})_{i,j}} 
    \prod_{\substack{ 1 \leq i \leq |\beta|\\ 1 \leq j \leq \beta_i -1}} d\mu(x^{(i)}_{j})\\
    &= \sum_{\beta \in \mathcal{T}_m} \sum_{x \in E_{\beta}} p_1^{|\beta|}(1-p_1)^{H(\beta)}  \send{a_1}{\beta}{x} \prod_{i=1}^{|\beta|} \prod_{j=1}^{\beta_i - 1} \mu(\{x_j^{(i)}\})\\
    &= \sum_{\beta \in \mathcal{T}_m}\sum_{x \in E_{\beta}} p_1^{|\beta|}(1-p_1)^{H(\beta)}   \send{a_1}{\beta}{x} \left(\prod_{i=1}^{|\beta|} \prod_{j=1}^{\beta_i - 1} (1-p_1)^{-1}p_2^{I_{i,j,2}} \dots p_N^{I_{i,j,N}}\right) \\
    &= \sum_{\beta \in \mathcal{T}_m} \sum_{x \in E_{\beta}}  \send{a_1}{\beta}{x} \cdot p_1^{|\beta|}   p_2^{c_{2,x}} \dots p_N^{c_{N,x}}, 
\end{align*}
where $I_{i,j,k} := \mathds{1}_{x_j^{(i)} = a_k}$.
\end{proof}

Let us now prove Theorem \ref{theorem:analyticityFiniteSupport}:
\begin{proof}[Proof of Theorem \ref{theorem:analyticityFiniteSupport}]
We consider $N$ weights $a_1 > a_2 > \dots > a_N \geq - \infty$ with $a_1 > 0$. By Hartogs' theorem on separate holomorphicity \cite{MR1846625}, 
if $f$ is a function defined on an open set $U\subseteq \CC^n$ such that, for all $i \in \llbracket 1 , n \rrbracket$, $z_i \mapsto f(z_1, \dots , z_n)$ is analytic when the other coordinates $z_k$ for $k \neq i$ are fixed, then $f$ is analytic on $U$.  Therefore, it suffices to show that for every $k \in \llbracket 2 , N\rrbracket$, $$p_k \mapsto C\left(\sum_{i=2}^N p_i \delta_{a_i} + (1-p_2-\dots-p_N)\delta_{a_1} \right)$$ is analytic on $\{ p_k \in [0,1], 0 \leq p_2 + \dots +p_N < 1 \}$ for all $p_2, \dots, p_{k-1}, p_{k+1}, \dots, p_N \in [0,1]$ such that $p_2+ \dots + p_{k-1} + p_{k+1}+ \dots + p_N < 1$. 

Now, we fix $k \in \llbracket 2 , N\rrbracket$. By \eqref{eq:CmultFormula}, it suffices to show that for all $(p_2, \dots, p_N)\in [0,1]^N$ such that $0 \leq p_2 + \dots + p_N < 1$, 
there exists $\delta >0$ such that for all $z \in D(0,\delta)$, 
$$(\star) := \sum_{\alpha \in \mathcal{T}_m} \sum_{x \in E_{\alpha}} |1-p_2-\dots- (p_k+z) - \dots - p_N |^{|\alpha|} \cdot  p_2^{c_{2,x}} \dots  p_{k-1}^{c_{k-1,x}} |p_k + z|^{c_{k,x}}\cdot p_{k+1}^{c_{k+1,x}} \dots  p_N^{c_{N,x}} < \infty $$ since $0 \leq s^{end}_{a_1} \leq a_1$.
We set $p_1 = 1-p_2 - \dots - p_N$. By the triangle inequality, we obtain the following upper bound \begin{align*}
    (\star) &\leq \sum_{\alpha \in \mathcal{T}_m} \sum_{x \in E_{\alpha}} (p_1 + |z| )^{|\alpha|} \cdot  p_2^{c_{2,x}} \dots  p_{k-1}^{c_{k-1,x}} (p_k + |z|)^{c_{k,x}}\cdot p_{k+1}^{c_{k+1,x}} \dots  p_N^{c_{N,x}}  \\
    &= \sum_{\alpha \in \mathcal{T}_m} \sum_{x \in E_{\alpha}} \left( \frac{p_1+|z|}{p_1-|z|} \right)^{|\alpha|} (p_1 - |z| )^{|\alpha|} \cdot  p_2^{c_{2,x}} \dots  p_{k-1}^{c_{k-1,x}} (p_k + |z|)^{c_{k,x}}\cdot p_{k+1}^{c_{k+1,x}} \dots  p_N^{c_{N,x}}  \\
\end{align*}
We take $\delta < \min(1-p_k,1-p_1) $ so that $p_k + |z|\in [0,1[$ and $1-p_1-|z|>0$. If we denote by $\E_{p, \mu}$ the expectation corresponding to the probability space where the weights have distribution $(1-p)\mu + p\delta_1$, then $$ (\star)\leq \E_{p_1-|z|,\sum_{i=2}^N \frac{p_i}{1-p_1} \delta_{a_i}}\left[\left( \frac{p_1+|z|}{p_1-|z|} \right)^{\Tpast}\right],$$
where $\Tpast$ is the length of the first triangular word in the past for a word of i.i.d. random variables with distribution $\mathcal{G}(p_1-|z|)$. This expectation is exactly the same as in \eqref{step2analyticity}, and its finiteness has already been proved by Lemma \ref{lemma:TExpMoment} for $|z|\leq \delta'$ for some $\delta'$. Then, with $\delta < \min(1-p_k,1-p_1, \delta') $, $(\star) < +\infty$. 

Therefore, $p_k \mapsto C\left(\sum_{i=2}^N p_i \delta_{a_i} + (1-p_2-\dots-p_N)\delta_1 \right)$ is analytic on $\{ p_k \in [0,1], 0 \leq p_2 + \dots +p_N < 1 \}$.
\end{proof}

\subsection{Continuity}
\label{subsec:continuity}

\begin{proof}[Proof of Theorem \ref{theorem:continuity}]

We proceed by successive approximations to prove the theorem. Firstly, we show that it suffices to prove the result on the set of measures $\nu$ with essential supremum equal to a fixed $M$ and with $\nu((M-\varepsilon,M))=0$ for some $\varepsilon>0$. Secondly, we use Theorem \ref{theorem:analyticCpmu} and we prove the continuity of $ \mu \mapsto C((1-p)\mu + p\delta_{M_{\nu}})$ via Proposition \ref{proposition:formuleMGS} in order to prove the continuity of $ \nu \mapsto C(\nu)$ on this class of measures.

Let us consider $\nu_n,\nu \in \mathcal{M}_1$ such that $d(\nu_n,\nu)$ converges to $0$ as $n$ tends to $+\infty$. In other terms, $(\nu_n)_n$ weakly converges to $\nu$ and $M_{\nu_n}$ converges to $M_{\nu}$ as $n$ tends to $+\infty$. We want to show that $C(\nu_n)$ converges to $C(\nu)$ as $n$ tends to $+\infty$.

For $\varepsilon > 0$, we set $\nu^{(\varepsilon)} = (1-p^{(\varepsilon)}) \mu^{(\varepsilon)} + p^{(\varepsilon)} \delta_{M_{\nu}}$ where $p^{(\varepsilon)}=\nu([M_{\nu}-\varepsilon,M_{\nu}])$ and 
$\mu^{(\varepsilon)}= \frac{\nu([-\infty, M_{\nu}-\varepsilon)\cap \bullet)}{1-p^{(\varepsilon)}}.$ 
Similarly, we set $\nu_n^{(\varepsilon)} = (1-p_n^{(\varepsilon)}) \mu_n^{(\varepsilon)} + p_n^{(\varepsilon)} \delta_{M_{\nu}}$ where $p_n^{(\varepsilon)}=\nu_n([M_{\nu}-\varepsilon,M_{\nu_n}])$ and 
$\mu_n^{(\varepsilon)}= \frac{\nu_n([-\infty, M_{\nu}-\varepsilon)\cap \bullet)}{1-p_n^{(\varepsilon)}}.$ 
Notice that the probability distribution $\mu^{(\varepsilon)}$ is well-defined if and only if $p^{(\varepsilon)}<1$. When $p^{(\varepsilon)}=1$, set $\nu^{(\varepsilon)} = \delta_{M_{\nu}}$. Similarly,  $\mu_n^{(\varepsilon)}$ is not well-defined when $p_n^{(\varepsilon)}=1$, and we set $\nu_n^{(\varepsilon)} = \delta_{M_{\nu}}$ in that case. 

Since a probability distribution has a set of atoms which is at most countable, we can assume that $\nu(\{M_{\nu}-\varepsilon\})=0$ for $\varepsilon$ as small as we need. Then, $ p_n^{(\varepsilon)}$ converges to $p^{(\varepsilon)}$ as $n$ tends to $+\infty$ by Portmanteau theorem (c.f \cite{MR1700749}) for such $\varepsilon$. 
 Then, $\mu_n^{(\varepsilon)}$ is also well-defined for $n$ large enough when $p^{(\varepsilon)} < 1$ by convergence of $(p_n^{(\varepsilon)})_n$ to $p^{(\varepsilon)}$.  
Similarly, it is easy to show that $(\mu_n^{(\varepsilon)})_n$ weakly converges to $\mu^{(\varepsilon)}$ 
by the Portmanteau theorem when $p^{(\varepsilon)} < 1$. As a consequence, $(\nu_n^{(\varepsilon)})_n$ weakly converges to $\nu^{(\varepsilon)}$.

By the triangle inequality, for all $n \in \N$ and $\varepsilon>0$, \begin{equation}\label{eq:proofContinuity0}
 |C(\nu_n)-C(\nu)|  \leq  |C(\nu_n)-C(\nu_n^{(\varepsilon)})|+|C(\nu_n^{(\varepsilon)})-C(\nu^{(\varepsilon)})|+|C(\nu^{(\varepsilon)})-C(\nu)| .    
\end{equation}

The first and last terms in the right-hand side of \eqref{eq:proofContinuity0} are easy to control. 
We consider $(X^{(n)}_{i,j})_{i<j}$ i.i.d random variables with distribution $\nu_n$. If we set $$\widetilde{X}^{(n)}_{i,j} := M_{\nu} \mathds{1}_{X^{(n)}_{i,j}\geq M_{\nu}-\varepsilon} + X^{(n)}_{i,j}\mathds{1}_{X^{(n)}_{i,j}<M_{\nu}-\varepsilon},$$ then $(\widetilde{X}^{(n)}_{i,j})_{i<j}$ are i.i.d random variables with distribution $\nu_n^{(\varepsilon)} $. For $\pi_N = (i_{N,1}, \dots, i_{N,k_N})$ a heaviest path starting at $0$ and ending at $N$ for the weights $({X}^{(n)}_{i,j})_{i<j}$, we have 
$$C(\nu_n)- C(\nu_n^{(\varepsilon)}) \leq \liminf_{N \rightarrow \infty} \frac{1}{N}\sum_{l=1}^{k_N-1} ({X}^{(n)}_{i_{N,l},i_{N,l+1}} - \widetilde{X}^{(n)}_{i_{N,l},i_{N,l+1}})$$ by \eqref{eq:defC}. For all $i<j$, $({X}^{(n)}_{i,j} - \widetilde{X}^{(n)}_{i,j})$ is positive  
only when $M_{\nu}< X^{(n)}_{i,j} \leq M_{\nu_n}$. In this case, $\widetilde{X}^{(n)}_{i,j} = M_{\nu}$. Then, we obtain the following domination 
\begin{align}\label{ineg1approx}
   C(\nu_n)- C(\nu_n^{(\varepsilon)}) \leq \max(M_{\nu_n} - M_{\nu}, 0). 
\end{align}

Similarly, if $\pi_N' = (i'_{N,1}, \dots, i'_{N,k_N'})$  
is a heaviest path starting at $0$ and ending at $N$ for the weights $(\widetilde{X}^{(n)}_{i,j})_{i<j}$, then
$$C(\nu_n^{(\varepsilon)}) -C(\nu_n) \leq \liminf_{N \rightarrow \infty} \frac{1}{N}\sum_{l=1}^{k_N'-1} (\widetilde{X}^{(n)}_{i'_{N,l},i'_{N,l+1}} - {X}^{(n)}_{i'_{N,l},i'_{N,l+1}})$$ by \eqref{eq:defC}. Since $(\widetilde{X}^{(n)}_{i,j} - {X}^{(n)}_{i,j})$ is positive only when $M_{\nu} - \varepsilon \leq {X}^{(n)}_{i,j} <  M_{\nu}$, we obtain \begin{align}\label{ineg2approx}
     C(\nu_n^{(\varepsilon)}) -C(\nu_n) \leq M_{\nu} - (M_{\nu} - \varepsilon) \leq \varepsilon.
\end{align}
As a consequence of \eqref{ineg1approx} and \eqref{ineg2approx}, 
$    |C(\nu_n^{(\varepsilon)}) -C(\nu_n)| \leq \varepsilon + |M_{\nu_n}-M_{\nu}|.$ A
 similar reasoning shows that $0 \leq C(\nu^{(\varepsilon)}) -C(\nu) \leq \varepsilon.$ Then, $|C(\nu_n)-C(\nu)| \leq |C(\nu_n^{(\varepsilon)})-C(\nu^{(\varepsilon)})| + 2\varepsilon + |M_{\nu_n}-M_{\nu}|$ for all $n\geq 0$ by \eqref{eq:proofContinuity0}. 
By convergence of $(M_{\nu_n})_n$ to $M_{\nu}$, we obtain $$\limsup_{n \rightarrow \infty}|C(\nu_n)-C(\nu)| \leq\limsup_{n \rightarrow \infty}|C(\nu_n^{(\varepsilon)})-C(\nu^{(\varepsilon)})| + 2\varepsilon, $$

Since we can take $\varepsilon$ as small as we want, it remains to see that $|C(\nu_n^{(\varepsilon)})-C(\nu^{(\varepsilon)})| $ converges to $0$ as $n$ tends to $+\infty$. To do so, we use the following two lemmas:

\begin{lemma}\label{lemma:stepContinuity1}
For all $p \in (0,1]$ and $M>0$, $ \mu \mapsto C((1-p)\mu + p \delta_{M})$ is continuous for the L\'evy-Prokhorov metric on the space of probability distributions in $\Mof{[-\infty,M)}$.
\end{lemma}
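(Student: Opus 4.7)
The plan is to exploit the series representation of Proposition \ref{proposition:formuleMGS}:
\[
C((1-p)\mu + p\delta_M) = \sum_{\beta \in \mathcal{T}_m} p^{|\beta|}(1-p)^{H(\beta)} I_\beta(\mu), \qquad I_\beta(\mu) := \int \send{M}{\beta}{(x_j^{(i)})_{i,j}} \prod_{\substack{1 \leq i \leq |\beta|\\ 1\leq j \leq \beta_i - 1}} d\mu(x_j^{(i)}),
\]
and to show that this series is uniformly (in $\mu$) convergent and that each term $I_\beta$ depends continuously on $\mu$ for the L\'evy--Prokhorov metric. Termwise continuity plus uniform convergence will then transfer to the sum.

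For the uniform domination, I would use that $\send{M}{\beta}{\cdot}$ takes values in $[0,M]$, so that $|I_\beta(\mu)| \leq M$ for every probability measure $\mu$; hence the $\beta$-th term is controlled by $Mp^{|\beta|}(1-p)^{H(\beta)}$ uniformly in $\mu$. The total sum $\sum_{\beta \in \mathcal{T}_m}p^{|\beta|}(1-p)^{H(\beta)}$ equals $1$, since it is precisely the law of the minimal triangular word $\alpha((X^{(2-\Tpast)}, \dots, X^{(1)}))$, which is a.s.\ well-defined by Lemma \ref{lemma:TpastFinite}. The Weierstrass M-test then yields uniform convergence of the series in $\mu$. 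For termwise continuity, I would appeal to the explicit formula \eqref{eq:send}: $\send{M}{\beta}{\cdot}$ is a finite composition of sums and maxima of the coordinates $x_j^{(i)}$ together with $M$, so it is bounded continuous on $([-\infty, M))^{H(\beta)}$ with the product topology inherited from $\{-\infty\}\cup\R$. If $\mu_n \to \mu$ weakly, the product measures $\mu_n^{\otimes H(\beta)}$ converge weakly to $\mu^{\otimes H(\beta)}$ on this Polish product space, and the Portmanteau theorem yields $I_\beta(\mu_n) \to I_\beta(\mu)$.

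Combining uniform convergence of the series with continuity of each term gives continuity of $\mu \mapsto C((1-p)\mu + p\delta_M)$. The only delicate technical point is verifying continuity of $\send{M}{\beta}{\cdot}$ at configurations where some coordinates equal $-\infty$; this reduces to the elementary fact that $\max$ and $+$ extend continuously to $\{-\infty\}\cup\R$ under the conventions $-\infty + a = -\infty$ and $\max(-\infty, a) = a$, so the induction underlying \eqref{eq:send} and Lemma \ref{lemma:keyLemmaTr} goes through verbatim.
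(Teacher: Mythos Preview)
Your proof is correct and follows essentially the same approach as the paper: both use the series representation from Proposition~\ref{proposition:formuleMGS}, establish that each $\send{M}{\beta}{\cdot}$ is bounded (by $M$) and continuous so that $I_\beta(\mu_n)\to I_\beta(\mu)$ under weak convergence, and then pass to the limit in the sum via dominated convergence (which you phrase as the Weierstrass $M$-test). Your write-up is in fact more explicit than the paper's on the weak convergence of product measures and on the continuity of $\send{M}{\beta}{\cdot}$ at coordinates equal to $-\infty$, but the strategy is identical.
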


\begin{lemma}\label{lemma:stepContinuity2}
For all $M > 0$ and $p \in (0,1]$, there exist $L \in \R^+$ and  $p_- < p_+ \in \R_+^*$ such that $ p_- < p < p_+$ and such that for all $\mu \in \Mof{[-\infty,M)}$, $ p \mapsto C((1-p)\mu + p \delta_M)$ is $L$-Lipschitz on $[p_-,\min(1,p_+)]$.
\end{lemma}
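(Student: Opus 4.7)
The plan is to exploit the series representation from Proposition \ref{proposition:formuleMGS}, upgrade the convergence estimates in the proof of Theorem \ref{theorem:analyticCpmu} to a uniform-in-$\mu$ bound on a complex neighborhood of $p$, and then apply Cauchy's inequality to transfer this bound to the derivative. The key structural observation is that, in the formula
\[
C((1-q)\mu + q\delta_M) = \sum_{\beta \in \mathcal{T}_m} q^{|\beta|}(1-q)^{H(\beta)} I_\beta(\mu),
\]
the coefficients $I_\beta(\mu) := \int \send{M}{\beta}{x} \prod_{i,j} d\mu(x^{(i)}_j)$ all lie in $[0, M]$ independently of $\mu$.

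First I would fix $p \in (0,1]$, set $f_\mu(q) := C((1-q)\mu + q\delta_M)$, and revisit the proof of Theorem \ref{theorem:analyticCpmu} to extract a radius $\delta > 0$ (with $\delta < p$) and a constant $K$, both depending only on $p$, such that
\[
\sum_{\beta \in \mathcal{T}_m} |p+z|^{|\beta|}\,|1-p-z|^{H(\beta)} \leq K \qquad \text{for every } z \in \overline{D(0,\delta)}.
\]
The $\mu$-independence of $K$ is crucial and follows from the estimate \eqref{step2analyticity}: the bound is cast in terms of $\E_{\cdot,\mu}\!\left[\cdot^{\Tpast}\right]$, and the distribution of $\Tpast$ depends solely on the Bernoulli marginal $\Proba(X = M)$, not on the conditional law $\mu$; this is precisely why Lemma \ref{lemma:TExpMoment} holds uniformly in $\mu$. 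Combining this with $|I_\beta(\mu)| \leq M$, the complex extension $z \mapsto f_\mu(p+z)$ is holomorphic on a neighborhood of $\overline{D(0,\delta)}$ and satisfies $|f_\mu(p+z)| \leq MK$ there, uniformly in $\mu$.

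Cauchy's integral formula on the disk $\overline{D(q,\delta/2)} \subset \overline{D(p,\delta)}$ then yields $|f_\mu'(q)| \leq 2MK/\delta$ for every real $q \in [p - \delta/2, p + \delta/2]$ and every $\mu \in \Mof{[-\infty,M)}$. Setting $L := 2MK/\delta$, $p_- := p - \delta/2 > 0$ and $p_+ := p + \delta/2$, the fundamental theorem of calculus delivers the announced $L$-Lipschitz property on $[p_-, \min(1,p_+)]$ uniformly in $\mu$. I expect the only real obstacle to be the careful verification that the estimate from Theorem \ref{theorem:analyticCpmu} is genuinely $\mu$-independent; once this bookkeeping is handled, the remainder is a direct application of Cauchy's inequality. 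The endpoint case $p=1$ is not exceptional since the bounds and analyticity extend to a full complex neighborhood of $1$, and one simply takes $\min(1, p_+) = 1$ in the final statement.
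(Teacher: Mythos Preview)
Your proposal is correct and the key observation---that the coefficients $I_\beta(\mu)$ lie in $[0,M]$ and that the distribution of $\Tpast$ depends only on $p$, not on $\mu$---is exactly what drives the paper's proof as well. The route you take, however, is genuinely different. The paper differentiates the series term by term: it bounds $\left|\tfrac{\partial}{\partial p}\bigl(p^{|\beta|}(1-p)^{H(\beta)}\bigr)\right|$ directly, massages the resulting sum into the form $\E_{p_-,\mu}\!\bigl[\Tpast^2\,(p_+/p_-)^{\Tpast}\bigr]$, and then invokes Zubkov's tail estimate (Theorem~\ref{theorem:Zubkov}) to show this second-moment-weighted generating function is finite. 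Your approach instead stays at the level of the function itself, obtaining a uniform sup bound on a complex disk from \eqref{step2analyticity} and Lemma~\ref{lemma:TExpMoment}, and then letting Cauchy's inequality produce the derivative bound for free. Your argument is cleaner and avoids the explicit algebraic manipulations leading to \eqref{eq:dominationProofContinuity} and \eqref{eq:dominationProofContinuityBis}; the paper's argument, on the other hand, is slightly more self-contained in that it never leaves the real line and gives a somewhat more explicit constant $L$. Both rest on the same $\mu$-uniformity of the exponential moments of $\Tpast$.
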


Assuming that those two lemmas are true, we obtain the result as follows. 
We set $$\hat{\nu}_n^{(\varepsilon)} = (1-p^{(\varepsilon)})\mu_n^{(\varepsilon)} + p^{(\varepsilon)}\delta_{M_{\nu}}. $$
By the triangle inequality, 
\begin{align}\label{eq:proofContinuity1}
    |C(\nu_n^{(\varepsilon)})-C(\nu^{(\varepsilon)})|\leq |C(\nu_n^{(\varepsilon)})-C(\hat{\nu}_n^{(\varepsilon)})|+|C(\hat{\nu}_n^{(\varepsilon)})-C(\nu^{(\varepsilon)})|.
\end{align}
By Lemma \ref{lemma:stepContinuity1}, the second term in \eqref{eq:proofContinuity1} converges to $0$ as $n$ tends to $+\infty$, since $(\mu_n^{(\varepsilon)})_n$ weakly converges to $\mu^{(\varepsilon)}$ when $p^{(\varepsilon)} < 1$. When $p^{(\varepsilon)} = 1$, the second term vanishes. For the first term, we apply Lemma \ref{lemma:stepContinuity2} with $p=p^{(\varepsilon)}$:  
since $(p_n^{(\varepsilon)})_n$ converges to $p^{(\varepsilon)}$, there exists $N \in \N$ such that for all $n \geq N$, $p_n^{(\varepsilon)} \in [p_- , \min(1,p_+)]$. Then, there exists $L \in \R_+$ such that for all $n \geq N $, 
$|C(\nu_n^{(\varepsilon)}) - C(\hat{\nu}_n^{(\varepsilon)})| \leq L | p_n^{(\varepsilon)} - p^{(\varepsilon)}| $ by Lemma \ref{lemma:stepContinuity2}. Therefore, the first 
term in \eqref{eq:proofContinuity1} converges to $0$, which concludes the proof of Theorem \ref{theorem:continuity}.
\end{proof} 
Let us now prove Lemmas \ref{lemma:stepContinuity1} and \ref{lemma:stepContinuity2}.

\begin{proof}[Proof of Lemma \ref{lemma:stepContinuity1}]
We consider fixed $p\in(0,1]$ and $M>0$. For $\mu$ a probability distribution on $[-\infty, M)$, we denote by $\E_{p,\mu}$ the expectation corresponding to a probability space where the weights $(X_{i,j})_{i<j}$ are i.i.d. with distribution $(1-p)\mu + p\delta_M$. By \eqref{eq:proofFormulaCnu3},
\begin{equation*}
    C((1-p)\mu + p\delta_M) = \sum_{\beta \in \mathcal{T}_m}   p^{|\beta|}(1-p)^{H(\beta)} \E_{p,\mu}\left[  \widetilde{\lambda}^{(1)}_1 \mid \alpha((X^{(2-\Tpast)}, \dots, X^{(1)})) = \beta  \right].
\end{equation*}
Notice that, conditionally to the event $(\alpha((X^{(2-\Tpast)}, \dots, X^{(1)})) = \beta)$, $(X_j^{(i)})_{i \in \llbracket 1 , |\beta| \rrbracket, j \in \llbracket 1 , \beta_i -1\rrbracket}$ are i.i.d. with distribution $\mu$. By definition of 
$\send{M}{\beta}{(x_{j}^{(i)})_{i,j}}$, 
$$ \E_{p,\mu} \left[ \widetilde{\lambda}^{(1)}_1 \mid \alpha((X^{(2-\Tpast)}, \dots, X^{(1)})) = \beta  \right] = \int \send{M}{\beta}{(x_{j}^{(i)})_{i,j}} \prod_{\substack{ 1 \leq i \leq |\beta|\\ 1 \leq j \leq \beta_i -1}} d\mu(x^{(i)}_{j}). $$
Notice that for all $\beta \in \mathcal{T}_m$ and $M>0$, $\send{M}{\beta}{\bullet}$ is continuous and bounded by $M$. 
As a consequence, $\E_{p,\mu}\left[  \widetilde{\lambda}^{(1)}_1 \mid \alpha((X^{(2-\Tpast)}, \dots, X^{(1)})) = \beta  \right]$ is a continuous function of $\mu$ for the L\'evy-Prokhorov metric. Since this conditional expectation is bounded by $M$, we obtain the continuity of $ \mu \mapsto C((1-p)\mu + p \delta_{M})$ by the dominated convergence theorem.
\end{proof}

\begin{proof}[Proof of Lemma \ref{lemma:stepContinuity2}] 
Let us consider $M>0$, $p\in (0,1]$ and $\mu \in \Mof{[-\infty,M)}$. 
Assume that $0< p_- < p < p_+$ where $p_-$ and $p_+$ will be adjusted later.
By Theorem \ref{theorem:analyticCpmu}, the map  $p \mapsto C((1-p)\mu + p \delta_M)$ is analytic on $(0,1]$.
Then, by the mean value theorem, 
\begin{equation}\label{eq:meanThm}
|C((1-p_1)\mu + p_1 \delta_M) - C((1- p_2 ) \mu + p_2 \delta_M)| 
\leq L_{\mu,p_1,p_2} |p_2 - p_1|,
\end{equation} 
for all $p_- \leq  p_1 < p_2 \leq \min(1,p_+)$, where 
$L_{\mu,p_1,p_2} := \sup\lrSet{\left| \left(\frac{\partial}{\partial p}  C((1-p)\mu + p \delta_{M})\mid_{p=p'}\right) \right|}{ p' \in (p_1,p_2)}$.
We need to dominate $L_{\mu, p_1, p_2}$ by some quantity which does not depends on $\mu$. 
For all $\beta \in \mathcal{T}_m$, $p\in(0,1]$ and $x_{j}^{(i)} \in [-\infty, M)$, we set $g(\beta, p,(x_{j}^{(i)})_{i,j} ) =p^{|\beta|}(1-p)^{H(\beta)} \send{M}{\beta}{(x_{j}^{(i)})_{i,j}} $. Then, by Proposition \ref{proposition:formuleMGS},
\begin{equation*}
    C((1-p)\mu+p\delta_M) = \sum_{\beta \in \mathcal{T}_m} \int g(\beta, p,(x_{j}^{(i)})_{i,j} ) \prod_{\substack{ 1 \leq i \leq |\beta|\\ 1 \leq j \leq \beta_i -1}} d\mu(x^{(i)}_{j}).
\end{equation*}
The function $p \mapsto g(\beta, p,(x_{j}^{(i)})_{i,j} )$ is differentiable on $(0,1]$ since it is polynomial, and we have
\begin{align*}
    \frac{\partial}{\partial p} g(\beta, p,(x_{j}^{(i)})_{i,j} ) = \left( |\beta|p^{|\beta|-1} (1-p)^{H(\beta)} - H(\beta)p^{|\beta|} (1-p)^{H(\beta)-1}\right) \cdot\send{M}{\beta}{(x_{j}^{(i)})_{i,j}} 
\end{align*}
for all $\beta \in \mathcal{T}_m \backslash\{1\}$, and $\frac{\partial}{\partial p} g(1, p,(x_{j}^{(i)})_{i,j} ) =  M .$
Since $\send{M}{\beta}{(x_{j}^{(i)})_{i,j}} \in [0,M]$, for all $\beta \in \mathcal{T}_m $, by the triangle inequality, 
\begin{align*}\left| \frac{\partial}{\partial p}g(\beta, p,(x_{j}^{(i)})_{i,j} ) \right| &\leq M \left( |\beta|p_+^{|\beta|-1} (1-p_-)^{H(\beta)} + H(\beta)p_+^{|\beta|} (1-p_-)^{H(\beta)-1}\right)  \\
&\leq M\left( \frac{|\beta|}{p_+} + \frac{H(\beta)}{1-p_-}\right)p_+^{|\beta|} (1-p_-)^{H(\beta)} \\
&\leq M\left( \frac{|\beta|}{p_+} + \frac{H(\beta)}{1-p_-}\right) p_-^{|\beta|}(1-p_-)^{H(\beta)} \cdot \left(\frac{p_+}{p_-}\right)^{|\beta|}.
\end{align*}
Since $\beta$ is triangular, $H(\beta) \leq \frac{|\beta|(|\beta|-1)}{2}$. Then, $H(\beta) \leq |\beta|^2$ and we have 
\begin{align}\label{eq:dominationProofContinuity}
\left| \frac{\partial}{\partial p}g(\beta, p,(x_{j}^{(i)})_{i,j} ) \right| &\leq M \left( \frac{1}{p_+} + \frac{1}{1-p_-}\right)p_-^{|\beta|} (1-p_-)^{H(\beta)} \cdot |\beta|^2 \left(\frac{p_+}{p_-}\right)^{|\beta|}
\end{align}
for all $\beta \in \mathcal{T}_m $. 
Since the term on the right in \eqref{eq:dominationProofContinuity} does not depend on $p$, it suffices to show that it is integrable in order to exchange the derivative and the sum by the Leibniz integral rule. With $\E_{p,\mu}$ the expectation corresponding to the probability space where the weights have distribution $(1-p)\mu + p\delta_{M}$,
 we obtain \begin{align}\label{eq:dominationProofContinuityBis}
\sum_{\beta \in \mathcal{T}_m}  p_-^{|\beta|}(1-p_-)^{H(\beta)} \cdot |\beta|^2 \left(\frac{p_+}{p_-}\right)^{|\beta|} =  \E_{p_- , \mu} \left[\Tpast^2\left(\frac{p_+}{p_-}\right)^{\Tpast}\right]
\end{align} 
by the law of total probability. 
Now, let us consider a $p_0 \in (0,p)$. 
By Theorem \ref{theorem:Zubkov}, if $ p_0 r_{p_0} > p$, the quantity in \eqref{eq:dominationProofContinuityBis} is finite with $p_- =p_0 $ and any $p_+ \in (p,  p_0r_{p_0})$. If $ p_0 r_{p_0} \leq p$, let us consider $p_-$ such that  
$p-p_0(r_{p_0}-1) < p_- <  p $. Since $r_p = \sup\Set{r \geq 1}{\E_{p,\mu}[r^{\Tpast}]< +\infty}$, $p \mapsto r_p$ is non-decreasing on $(0,1]$ since $\Tpast$ is non-increasing in $p$ by trivial coupling. Then, $0 < p-p_- <  p_0(r_{p_0}-1) \leq  p_-(r_{p_-}-1) $, which implies that $ p < p_- r_{p_-}$. Therefore, for any $p_+ \in (p,p_-r_{p_-})$, the quantity in \eqref{eq:dominationProofContinuityBis} is finite.    
We set $L =  \sum_{\beta \in \mathcal{T}_m} \left( \frac{1}{p_+} + \frac{1}{1-p_-}\right)  p_-^{|\beta|} (1-p_-)^{H(\beta)} \cdot |\beta|^2 \left(\frac{p_+}{p_-}\right)^{|\beta|}$, which is finite and depends only on $p_-$ and $p_+$. Then, by \eqref{eq:meanThm} and the Leibniz integral rule, $ p \mapsto C((1-p)\mu + p\delta_{M})$ is $L$-Lipschitz on $[p_-,\min(1,p_+)]$.
\end{proof}

\subsection{Strict monotonicity of \texorpdfstring{$C(\nu)$}{TEXT}}\label{subsec:strictMono}

In this subsection, we consider $\nu_j \in \mathcal{M}_1$ of the form $(1-p_j)\mu_j + p_j\delta_M$ for $j=1,2$, where $M>0$, $ p_1, p_2 \in (0,1]$ and $\mu_1,\mu_2 \in \Mof{[-\infty,M)}$. 
Assuming that $\nu_1$ is dominated by $\nu_2$ for the stochastic order, there exists $(X^{(n)}_{i}(\nu_j))_{i,n \geq 1}$ i.i.d. random variables with distribution $\nu_j$ such that $X^{(n)}_i(\nu_1) \leq X^{(n)}_i(\nu_2)$ almost surely for all $i,n \geq 1$ by trivial coupling. We set $\lambda$ and $\theta$ to be  the MGS with respective weights sequences $(X^{(n)}_i(\nu_1))_{i,n \geq 1}$ and $(X^{(n)}_i(\nu_2))_{i,n \geq 1}$, and starting configuration $\delta_0$. 

\begin{proposition}\label{proposition:formulaErgodicMGS}
With $\lambda$ and $\theta$ as previously defined, consider $\xi_n := \inf\{i \in \N, X^{(n)}_i(\nu_1) = M\}$ for all $n \in \N$. Let $T_1 < T_2 < \dots$ be the enumeration of the elements of  $\mathcal{R} =  \Set {n \in \N}{ \forall i \in \N,\xi_{n + i-1} \leq i}$. Then, 
\begin{align}
    C(\nu_1 ) = \gamma_{1-p_1} \E[\lambda^{(T_2-1)}_1 - \lambda^{(T_1-1)}_1], \label{align:1Inc}\\
    C(\nu_2 ) = \gamma_{1-p_1} \E[\theta^{(T_2-1)}_1 - \theta^{(T_1-1)}_1], \label{align:2Inc}
\end{align}
    where $\gamma_q := \prod_{k \geq 1} (1-q^k)$ for all $q \in [0,1)$.
\end{proposition}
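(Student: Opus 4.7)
The two formulas are proved by the same argument, so I will outline the derivation of \eqref{align:1Inc} and indicate the trivial transfer to \eqref{align:2Inc}. The strategy is a renewal-reward decomposition: the renovation times $(T_k)_{k\geq 1}$ slice time into blocks whose asymptotic density is $\gamma_{1-p_1}$, and the time constant $C(\nu_1)$ will be identified with $\gamma_{1-p_1}$ times the expected front-increment over one block.

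First I would apply Birkhoff's pointwise ergodic theorem to the shift-invariant indicator $(\mathds{1}_{n\in\mathcal{R}})_{n\geq 1}$ on the ergodic product space of the weights $(X^{(n)}_i(\nu_1))_{n,i\geq 1}$. Since $\Proba(k\in\mathcal{R})=\prod_{j\geq 1}(1-(1-p_1)^j)=\gamma_{1-p_1}$ (as computed in the proof of Lemma~\ref{lemma:TpastFinite}), this yields $K_n/n\to\gamma_{1-p_1}$ a.s.\ where $K_n:=\#\{k\geq 1:T_k\leq n\}$, and hence $T_K/K\to 1/\gamma_{1-p_1}$ a.s. Combining with $\lambda_1^{(n)}/n\to C(\nu_1)$ a.s.\ (from Remark~\ref{remark:notDep}) and the fact that the inter-renovation gaps $T_{k+1}-T_k$ have uniform exponential moments (essentially by Lemma~\ref{lemma:TExpMoment} applied to the time-reversed process), I would obtain $\lambda_1^{(T_K-1)}/K\to C(\nu_1)/\gamma_{1-p_1}$ a.s.

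Next I would telescope $\lambda_1^{(T_K-1)}-\lambda_1^{(T_1-1)}=\sum_{k=1}^{K-1}\Delta_k$ with $\Delta_k:=\lambda_1^{(T_{k+1}-1)}-\lambda_1^{(T_k-1)}$, and apply a stationary LLN to the increments. Proposition~\ref{proposition:couplingStat} (applied after extending the one-sided weights into a two-sided ergodic sequence) gives $\Delta_k=\widetilde{\lambda}_1^{(T_{k+1}-1)}-\widetilde{\lambda}_1^{(T_k-1)}$ for $k\geq 1$, where $\widetilde{\lambda}$ is the stationary MGS of Proposition~\ref{proposition:stationaryMGS}. The time-stationarity of $\widetilde{\lambda}$ (Remark~\ref{remark:stat}) together with the shift-invariance of $\mathcal{R}$ makes $(\Delta_k,T_{k+1}-T_k)_{k\geq 1}$ a Palm-stationary ergodic sequence, so $\E[\Delta_k]=\E[\Delta_1]$ for every $k\geq 1$. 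Since $0\leq \Delta_k\leq M(T_{k+1}-T_k)$ has a uniform exponential moment, the stationary ergodic LLN then gives $K^{-1}\sum_{k=1}^{K}\Delta_k\to\E[\Delta_1]$ a.s.; matching this with the limit from the previous paragraph yields $\E[\Delta_1]=C(\nu_1)/\gamma_{1-p_1}$, which is \eqref{align:1Inc}.

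Finally, the identity \eqref{align:2Inc} follows by applying exactly the same argument to $\theta$: since $X^{(n)}_i(\nu_1)\leq X^{(n)}_i(\nu_2)\leq M$ forces $X^{(n)}_i(\nu_2)=M$ whenever $X^{(n)}_i(\nu_1)=M$, the same $T_k$'s remain renovation times for $\theta$, Lemma~\ref{lemma:keyLemmaTr} still applies at these times, and the derivation transfers verbatim (now using the stationary MGS associated with $\nu_2$ in the LLN step). The main obstacle I foresee is precisely the stationary LLN above: it requires lifting the time-stationarity of $\widetilde{\lambda}$ to Palm-stationarity of the block-indexed sequence $(\Delta_k,T_{k+1}-T_k)$, and securing uniform integrability via the exponential tails of inter-renovation gaps; everything else follows directly from Birkhoff and the already-established a.s.\ convergence of $\lambda_1^{(n)}/n$.
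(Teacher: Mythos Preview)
Your proposal is correct and follows the same renewal--reward skeleton as the paper: Birkhoff gives $K_n/n\to\gamma_{1-p_1}$, you telescope $\lambda_1^{(T_K-1)}$ into block increments $\Delta_k$, establish stationarity of $(\Delta_k)_k$, and match limits; the transfer to $\theta$ via $X^{(n)}_i(\nu_1)=M\Rightarrow X^{(n)}_i(\nu_2)=M$ is exactly what the paper does.

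The one genuine difference is how you obtain stationarity of $(\Delta_k)_k$. You route through Proposition~\ref{proposition:couplingStat} and the stationary MGS $\widetilde\lambda$, then invoke Palm-stationarity of the block-indexed sequence. The paper takes a shorter path: by Lemma~\ref{lemma:keyLemmaTr} and Remark~\ref{remark:keyLemmaTr}, the increment $\Delta_k=\lambda_1^{(T_{k+1}-1)}-\lambda_1^{(T_k-1)}$ is a measurable function of the block weights $(X^{(l)}(\nu_1))_{T_k\le l<T_{k+1}}$ \emph{alone} (independent of the configuration at time $T_k-1$), so stationarity of $(\Delta_k)_k$ follows directly from stationarity of the block-weight sequence, with no need to bring in $\widetilde\lambda$ or Proposition~\ref{proposition:couplingStat} at all. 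This also makes the integrability trivial ($0\le\Delta_k\le M(T_{k+1}-T_k)$ and $\E[T_2-T_1]=1/\gamma_{1-p_1}<\infty$), so your appeal to exponential tails via Lemma~\ref{lemma:TExpMoment} is unnecessary. Your detour is harmless, but the direct use of Lemma~\ref{lemma:keyLemmaTr} is both simpler and keeps the argument entirely within the one-sided setup.
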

\begin{proof}
For $j=1,2$, the sequences $( (X^{(l)}(\nu_j))_{T_k \leq l < T_{k+1}})_{k \in \N}$ are stationary by definition of $(T_k)_{k \in \N}$. By Lemma \ref{lemma:keyLemmaTr} and Remark \ref{remark:keyLemmaTr}, $(\lambda^{(T_{k+1}-1)}_1 - \lambda^{(T_k-1)}_1)_{k \in \N} $ depends only on $ (X^{(l)}(\nu_1))_{T_k \leq l < T_{k+1}}$ . Therefore, $(\lambda^{(T_{k+1}-1)}_1 - \lambda^{(T_k-1)}_1)_{k \in \N} $ is a stationary sequence.  
If we set $K_n = \max\Set{k \geq 0}{ T_k -1 \leq n}$, then $\lambda^{(n)}_1 = \lambda^{(T_1-1)}_1 + \sum_{k = 1}^{K_n-1} (\lambda^{(T_{k+1}-1)}_1 - \lambda^{(T_k-1)}_1 ) +(\lambda^{(n)}_1-\lambda^{(T_{K_n} -1)}_1)   $. 
By ergodicity, 
$$\frac{K_n}{n} \xrightarrow[n \rightarrow \infty]{a.s.} \Proba( 1 \in \mathcal{R})= \gamma_{1-p} > 0 $$ and \[ C( \nu_{ 1} ) = \lim_{n\rightarrow +\infty} \frac{1}{n}\sum_{k=1}^{K_n-1} \E\left[ \lambda^{(T_{k+1}-1)}_{ 1}-\lambda^{(T_{k}-1)}_{ 1} \right].  \]
Therefore, we obtain \eqref{align:1Inc}. To obtain \eqref{align:2Inc}, we notice that $X_i^{(n)}(\nu_1) = M $  implies that $X_i^{(n)}(\nu_2) = M $.  
Therefore, $(\theta^{(T_{k+1}-1)}_1 - \theta^{(T_k-1)}_1 )_{k \in \N}$ is also stationary by Lemma \ref{lemma:keyLemmaTr} and Remark \ref{remark:keyLemmaTr}, and a similar reasoning leads to \eqref{align:2Inc}.
\end{proof}

The following lemma allows us to compare the positions of the fronts in two MGS with comparable weights sequences:
\begin{lemma}\label{lemma:compareMGSs}
    Consider $(x^{(n)})_{n \in \N}$ and $(y^{(n)})_{n \in \N}$ two deterministic sequences of elements of $\mathcal{W}_{\leq M}$. Set $(\lambda^{(n)})_{n\in \Z_+}$ and  $(\theta^{(n)})_{n\in \Z_+}$ the deterministic MGS with starting configurations $ \lambda^{(0)} = \theta^{(0)} = \delta_0 $ and respective weights sequences $(x^{(n)}_i)_{i,n \in \N}$ and $(y^{(n)}_i)_{i,n \in \N}$.  More explicitly, for all $n \in \N$, set $\lambda^{(n)} = \lambda^{(n-1)} + \delta_{\mathfrak{m}(\lambda^{(n-1)},x^{(n)})}$ and $\theta^{(n)} = \theta^{(n-1)} + \delta_{\mathfrak{m}(\theta^{(n-1)},y^{(n)})}$. If  $ x^{(n)}_i \leq y^{(n)}_i$ for all $i,n\in\N$, then $ \lambda^{(n)}_i \leq \theta^{(n)}_i $ for all $n \in \N$ and $ 1 \leq i \leq n+1$.
\end{lemma}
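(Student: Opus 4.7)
The plan is to proceed by induction on $n$. The base case $n=0$ is immediate since both configurations equal $\delta_0$, so $\lambda^{(0)}_1 = \theta^{(0)}_1 = 0$. For the inductive step, assume $\lambda^{(n)}_i \leq \theta^{(n)}_i$ for all $i \in \llbracket 1, n+1 \rrbracket$. I first show that the two new particles added respectively to $\lambda^{(n)}$ and $\theta^{(n)}$ satisfy the desired comparison: from the induction hypothesis and the assumption $x^{(n+1)}_i \leq y^{(n+1)}_i$, one gets
\begin{equation*}
\mathfrak{m}(\lambda^{(n)}, x^{(n+1)}) = \max_{1 \leq i \leq n+1}\bigl(\lambda^{(n)}_i + x^{(n+1)}_i\bigr) \leq \max_{1 \leq i \leq n+1}\bigl(\theta^{(n)}_i + y^{(n+1)}_i\bigr) = \mathfrak{m}(\theta^{(n)}, y^{(n+1)}),
\end{equation*}
with the usual convention $-\infty + r = -\infty$ so that the addition of $-\infty$-valued weights poses no difficulty.

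The remaining and main step is a combinatorial fact about inserting a new value into a sorted sequence: if $a_1 \geq \dots \geq a_k$ and $b_1 \geq \dots \geq b_k$ satisfy $a_i \leq b_i$ for all $i$, and $u \leq v$, then the sorted sequences obtained by inserting $u$ into $(a_i)$ and $v$ into $(b_i)$ still satisfy the componentwise inequality. I would prove this via the standard dual characterization: for finite multisets $A, B$ of the same cardinality sorted in decreasing order, $a_i \leq b_i$ for every $i$ is equivalent to $|\{j : a_j > t\}| \leq |\{j : b_j > t\}|$ for every $t \in \R$. Once this reformulation is in place, the insertion step is immediate: $N_{A \cup \{u\}}(t) = N_A(t) + \mathds{1}_{u > t}$ and $N_{B \cup \{v\}}(t) = N_B(t) + \mathds{1}_{v > t}$, and the inequality propagates from $u \leq v$ and $N_A(t) \leq N_B(t)$.

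Applying this insertion lemma with $(a_i) = (\lambda^{(n)}_i)_{1 \leq i \leq n+1}$, $(b_i) = (\theta^{(n)}_i)_{1 \leq i \leq n+1}$, $u = \mathfrak{m}(\lambda^{(n)}, x^{(n+1)})$, and $v = \mathfrak{m}(\theta^{(n)}, y^{(n+1)})$ yields $\lambda^{(n+1)}_i \leq \theta^{(n+1)}_i$ for all $i \in \llbracket 1, n+2 \rrbracket$, completing the induction. The main (mild) obstacle is the insertion lemma itself; the rest is bookkeeping. One could alternatively prove the insertion lemma by direct case analysis on the relative positions of $u$ and $v$ in their respective sequences, but the threshold-counting reformulation is cleaner and avoids fragile case distinctions.
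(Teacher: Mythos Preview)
Your proof is correct and follows essentially the same route as the paper: induction on $n$, the same termwise bound to get $\mathfrak{m}(\lambda^{(n)},x^{(n+1)}) \leq \mathfrak{m}(\theta^{(n)},y^{(n+1)})$, and then the insertion lemma. For the insertion step the paper phrases the threshold-counting argument probabilistically (pick a uniform index $Z$, observe $S_x \leq S_y$ a.s., compare CDFs), whereas you state the equivalent rank characterization $|\{j : a_j > t\}| \leq |\{j : b_j > t\}|$ directly; these are the same idea, and your deterministic formulation is if anything slightly more transparent.
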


\begin{proof}
    Let us show that $ \lambda^{(n)}_i \leq \theta^{(n)}_i $ for all $ 1 \leq i \leq n+1$ by induction on $n$. For $n = 0$, the result is clearly true. Assume that the result is true for $n-1 \in \Z_+$: $ \lambda^{(n-1)}_i \leq \theta^{(n-1)}_i $ for all $ 1 \leq i \leq n$. Therefore, by definition of $\mathfrak{m}$,  \[ \mathfrak{m}(\lambda^{(n-1)},x^{(n)}) =\max_{1 \leq i \leq n}(\lambda^{(n-1)}_i + x^{(n)}_i) \leq \max_{1 \leq i \leq n}(\theta^{(n-1)}_i + y^{(n)}_i) = \mathfrak{m}(\theta^{(n-1)},y^{(n)}) .\]
    Note that  $(\lambda^{(n)}_i)_{1\leq i \leq n+1}$ (resp. $(\theta^{(n)}_i)_{1\leq i \leq n+1}$) corresponds to the elements of $\Set{\lambda^{(n-1)}_i}{1 \leq i \leq n} \cup \{\mathfrak{m}(\lambda^{(n-1)},x^{(n)})\}$ (resp. $\Set{\theta^{(n-1)}_i}{1 \leq i \leq n} \cup \{\mathfrak{m}(\theta^{(n-1)},y^{(n)})\}$) counted with repetitions in non-increasing order.  
    
    Consider $Z$ a uniform random variable on $\llbracket 1 , n + 1 \rrbracket $ and set $S_x = \lambda^{(n-1)}_Z$ and $S_y = \theta^{(n-1)}_Z$ when $1 \leq Z \leq n$, and set $S_x = \mathfrak{m}(\lambda^{(n-1)}, x^{(n)})$ and $S_y = \mathfrak{m}(\theta^{(n-1)}, y^{(n)})$ when $Z=n+1$. By construction, $S_x \leq S_y$ almost surely. Therefore, if $F_x$ and $F_y$ are the respective cumulative distribution functions of $S_x$ and $S_y$, then  $F_x \geq F_y $. Notice that $S_x$ (resp. $S_y$) has the same distribution that $S'_{x} := \lambda^{(n)}_Z $ (resp. $S'_{y} := \theta^{(n)}_Z $). By the inequality on the cumulative distribution functions of $S'_x$ and $S'_y$, it is straightforward that $\lambda^{(n)}_i \leq \theta^{(n)}_i $ for all $1\leq i \leq n+1$.
\end{proof}

\begin{proof}[Proof of Theorem \ref{theorem:strictIncreasing}] 
Assume that $\nu_2$ strictly dominates $\nu_1$ in the sense that for all $t\in \R$, $\mu_1([-\infty , t]) \geq \mu_2([-\infty , t])$, and this inequality is strict for at least one $t \in \R$. 

By Proposition \ref{proposition:formulaErgodicMGS}, it suffices to prove that $\E[\lambda^{(T_2-1)}_1-\lambda^{(T_1-1)}_1] <  \E[\theta^{(T_2-1)}_1-\theta^{(T_1-1)}_1] $. By the law of total probabilities, 
$$\E[\lambda^{(T_2-1)}_1-\lambda^{(T_1-1)}_1] =  \sum_{\beta \in \mathcal{T}_m} p_1^{|\beta|}(1-p_1)^{H(\beta)} \E[\lambda^{(T_2-1)}_1-\lambda^{(T_1-1)}_1 | \alpha(X^{(T_1)}(\nu_1), \dots, X^{(T_2-1)}(\nu_1)) = \beta],$$ and a similar formula holds for $(\theta^{(n)}):$ $$\E[\theta^{(T_2-1)}_1-\theta^{(T_1-1)}_1] =  \sum_{\beta \in \mathcal{T}_m} p_1^{|\beta|}(1-p_1)^{H(\beta)} \E[\theta^{(T_2-1)}_1-\theta^{(T_1-1)}_1 | \alpha(X^{(T_1)}(\nu_1), \dots, X^{(T_2-1)}(\nu_1)) = \beta].$$ 

Notice that $\alpha((X^{(T_1)}(\nu_j),\dots ,X^{(T_2-1)}(\nu_j)))$ is triangular minimal for $j=1$ by definition of $T_1$ and triangular for $j=2$ since $X^{(n)}_i(\nu_1) \leq X^{(n)}_i(\nu_2)$ for all $i,n \geq 1$. Therefore, by Lemma \ref{lemma:keyLemmaTr}, $\lambda^{(T_2-1)}_1-\lambda^{(T_1-1)}_1 $ (resp. $\theta^{(T_2-1)}_1-\theta^{(T_1-1)}_1$) depends only on $\lambda^{(T_1-1)}_1$ and $(X^{(n)}(\nu_1))_{n \geq T_1}$ (resp. $\theta^{(T_1-1)}_1$ and $(X^{(n)}(\nu_2))_{n \geq T_1}$).  As a consequence, $\lambda^{(T_2-1)}_1-\lambda^{(T_1-1)}_1 \leq \theta^{(T_2-1)}_1-\theta^{(T_1-1)}_1$ almost surely by Lemma \ref{lemma:compareMGSs}. 
Therefore, it suffices to find $\beta \in \mathcal{T}_m $ such that $$ \E[\lambda^{(T_2-1)}_1-\lambda^{(T_1-1)}_1 | \alpha(X^{(T_1)}(\nu_1), \dots, X^{(T_2-1)}(\nu_1)) = \beta] < \E[\theta^{(T_2-1)}_1-\theta^{(T_1-1)}_1 | \alpha(X^{(T_1)}(\nu_1), \dots, X^{(T_2-1)}(\nu_1)) = \beta].  $$ 

For $k \in \N$, we consider the word $\beta^{(k)} = (1, \dots, 1, (k+1), k, \dots, 3, 2 )\in \mathcal{T}_m$ of size $2k$. For $k \in \N$ and $t\in\R$ such that $ M(1-k) < t \leq M(2-k) $, we also set $A_{k,t}$ to be the intersection of the events 
\begin{gather*}
    \{X^{(T_1)}_1(\nu_1) = \dots =X^{(T_1+k-1)}_1(\nu_1) = X^{(T_1+k)}_{k+1}(\nu_1) = X^{(T_1+k+1)}_{k}(\nu_1) = \dots = X^{(T_1+2k-1)}_{2}(\nu_1) =M\}, \\
    \{X^{(T_1+k)}_{1}(\nu_2) \in [t, M (2-k)]\},\\
    \{X^{(T_1+k)}_{1}(\nu_1) < X^{(T_1+k)}_{1}(\nu_2)\},\\  \bigcap_{\substack{ 1 \leq n \leq 2k \\ 1 \leq i < \beta^{(k)}_n \\ (i,n) \neq (1,k+1)}} \{ X^{(T_1 + n-1)}_i(\nu_2) \leq t \}.
\end{gather*}

Let us explain the reason why we consider this event in the case where $T_1=1$, $\lambda^{(T_1-1)}_1=0$ and $\theta^{(T_1-1)}_1=0$ (which is easy to extend to the general case). Assume that the event $A_{k,t}$ is realized.   Then for both processes $\lambda$ and $\theta$, the particle added at time $n$ for $1\leq n\leq k$ is placed at position $nM$. The particle added at time $k+1$ for $\lambda$ (resp. for $\theta$) is placed at some position $P_\lambda$ such that $M\leq P_\lambda \leq 2M$ (resp. $P_\theta$ such that $M< P_\theta \leq 2M$). Note that $P_\lambda<P_\theta$. Finally, the particle added at time $k+1+n$ for $1\leq n\leq k-1$ is placed at position $P_\lambda+nM$ for the process $\lambda$ and at position $P_\theta+nM$ for the process $\theta$. See Figure \ref{fig:8} for an illustration. 

We have $\lambda^{(T_2-1)}_1-\lambda^{(T_1-1)}_1 = P_\lambda+(k-1)M $ and $\theta^{(T_2-1)}_1-\theta^{(T_1-1)}_1 = P_\theta+(k-1)M$, hence $ \lambda^{(T_2-1)}_1-\lambda^{(T_1-1)}_1 < \theta^{(T_2-1)}_1-\theta^{(T_1-1)}_1$. To complete the proof, it suffices to find $t$ and $k$ such that $\Proba(A_{k,t})> 0$ and $ M(1-k) < t \leq M(2-k) $.

\begin{figure}[!t]
\begin{minipage}{\textwidth}
    \centering  
    \begin{tikzpicture}
  [
  middlearrow/.style 2 args={
        decoration={             
            markings, 
            mark=at position 0.5 with {\arrow[xshift=3.333pt]{triangle 45}, \node[#1] {#2};}
        },
        postaction={decorate}
    },
    particle/.style={circle, draw},
    minimum size=5pt
    ]
    
    \draw[->,thick] (-0.5,-0.5) -- (9.5,-0.5);
    
    \draw[-,thick] (0,-0.4) -- (0,-0.6);
    \node (labelm1) at (0,-0.9) {$0$};
    
    \draw[-,thick] (1,-0.4) -- (1,-0.6);
    
    \draw[-,thick] (2,-0.4) -- (2,-0.6);
    \node (labelm1) at (2,-0.9) {$M$};
    
    \draw[-,thick] (3,-0.4) -- (3,-0.6);
    
    \draw[-,thick] (4,-0.4) -- (4,-0.6);
    \node (labelm1) at (4,-0.9) {$2M$};
    
    \draw[-,thick] (5,-0.4) -- (5,-0.6);

    \draw[-,thick] (6,-0.4) -- (6,-0.6);
    \node (labelm1) at (6,-0.9) {$3M$};
    \draw[-,thick] (7,-0.4) -- (7,-0.6);
    \draw[-,thick] (8,-0.4) -- (8,-0.6);
    \node (labelm1) at (8,-0.9) {$4M$};
    \draw[-,thick] (9,-0.4) -- (9,-0.6);
    
    \node[particle] (0) at (0,0) {0};
    \node[particle] (1) at (2,0) {1};
    \node[particle] (2) at (4,0) {2};
    \node[particle] (3) at (6,0) {3};
    \node[particle] (4) at (8,0) {4};
    \node[particle] (5) at (2,0.75) {5};
    \node[particle] (6) at (4,0.75) {6};
    \node[particle] (7) at (6,0.75) {7};
    \node[particle] (8) at (8,0.75) {8};
    
    \draw[dotted, red] (4) edge[bend right, middlearrow={above}{$\leq -3M$}] (5);
    \draw[dotted, blue] (0) edge[middlearrow={below}{$+M$}] (1);
    \draw[dotted, blue] (1) edge[middlearrow={below}{$+M$}] (2);
    \draw[dotted, blue] (2) edge[middlearrow={below}{$+M$}] (3);
    \draw[dotted, blue] (3) edge[middlearrow={below}{$+M$}] (4);
    \draw[dotted, blue] (5) edge[middlearrow={below}{$+M$}] (6);
    \draw[dotted, blue] (6) edge[middlearrow={below}{$+M$}] (7);
    \draw[dotted, blue] (7) edge[middlearrow={below}{$+M$}] (8);
    \draw[dotted, blue] (0) edge[middlearrow={above}{$+M$}] (5);


\end{tikzpicture}
\end{minipage}
\begin{minipage}{\textwidth}
    \centering
    \begin{tikzpicture}
  [
  middlearrow/.style 2 args={
        decoration={             
            markings, 
            mark=at position 0.5 with {\arrow[xshift=3.333pt]{triangle 45}, \node[#1] {#2};}
        },
        postaction={decorate}
    },
    particle/.style={circle, draw},
    minimum size=5pt
    ]
    
    \draw[->,thick] (-0.5,-0.5) -- (9.5,-0.5);
    
    \draw[-,thick] (0,-0.4) -- (0,-0.6);
    \node (labelm1) at (0,-0.9) {$0$};
    
    \draw[-,thick] (1,-0.4) -- (1,-0.6);
    
    \draw[-,thick] (2,-0.4) -- (2,-0.6);
    \node (labelm1) at (2,-0.9) {$M$};
    
    \draw[-,thick] (3,-0.4) -- (3,-0.6);
    
    \draw[-,thick] (4,-0.4) -- (4,-0.6);
    \node (labelm1) at (4,-0.9) {$2M$};
    
    \draw[-,thick] (5,-0.4) -- (5,-0.6);

    \draw[-,thick] (6,-0.4) -- (6,-0.6);
    \node (labelm1) at (6,-0.9) {$3M$};
    \draw[-,thick] (7,-0.4) -- (7,-0.6);
    \draw[-,thick] (8,-0.4) -- (8,-0.6);
    \node (labelm1) at (8,-0.9) {$4M$};
    \draw[-,thick] (9,-0.4) -- (9,-0.6);
    
    \node[particle] (0) at (0,0) {0};
    \node[particle] (1) at (2,0) {1};
    \node[particle] (2) at (4,0) {2};
    \node[particle] (3) at (6,0) {3};
    \node[particle] (4) at (8,0) {4};
    \node[particle] (5) at (3,0) {5};
    \node[particle] (6) at (5,0) {6};
    \node[particle] (7) at (7,0) {7};
    \node[particle] (8) at (9,0) {8};
    
    \draw[dotted,blue] (4) edge[bend right=50, middlearrow={above}{$-2.5M$}] (5);
    \draw[dotted,blue] (0) edge[bend left=50, middlearrow={above}{$+M$}] (1);
    \draw[dotted,blue] (1) edge[bend left=50, middlearrow={above}{$+M$}] (2);
    \draw[dotted,blue] (2) edge[bend left=50, middlearrow={above}{$+M$}] (3);
    \draw[dotted,blue] (3) edge[bend left=50, middlearrow={above}{$+M$}] (4);
    \draw[dotted,blue] (5) edge[bend left=50, middlearrow={above}{$+M$}] (6);
    \draw[dotted,blue] (6) edge[bend left=50, middlearrow={above}{$+M$}] (7);
    \draw[dotted,blue] (7) edge[bend left=50, middlearrow={above}{$+M$}] (8);


\end{tikzpicture}
\end{minipage}
    \caption{Illustration of the particles added between steps $T_1=1$ and $T_2-1 = 8$ for an MGS with weight distribution $\nu_1$ at the top and $\nu_2$ at the bottom conditionally on the event $A_{k,t}$, with $k =4 $ and  $ -3M < t \leq -2.5M $. We also assume here that $\lambda_1^{(0)}=\theta_1^{(0)}=0$. The particles are labeled by their appearance times. For this realization, $X_1^{(T_1+k)}(\nu_2) = -2.5M$ and $X_1^{(T_1+k)}(\nu_1) \leq -3M$. 
   }
    \label{fig:8}
\end{figure}

For any $t \in \R$, we set $k_t = \lceil tM^{-1} \rceil$ so that $M(k_{t}-1) < t \leq Mk_t$. 
Let $X$ and $\widetilde X$ be two random variables of respective laws $\nu_1$ and $\nu_2$ coupled by trivial coupling such that $X\leq \widetilde X$.  
For $t\in\R$, we define the events:
\begin{itemize}
    \item $B_t = \{ \widetilde X \leq t \}$,
    \item $C_t = \{X< \widetilde X \}\cap\{  t \leq \widetilde X \leq Mk_t \} $.\label{enum2inc}
\end{itemize}
We search for $t \in \R$ such that $\Proba(B_t) > 0$ and $\Proba(C_t) > 0$, this will imply that $\Proba(A_{k_t,t})>0$. Since $ \Proba(X< \widetilde X) >0 $, there exists $t \in \R$ such that $\Proba(C_t)>0$. Let us consider $k' =  \max\Set{k_t \in \Z}{\exists t \in \R, \Proba(C_t) > 0}$. Notice that $t \mapsto \Proba(B_t)$ and  $t \mapsto \Proba(C_t)$ are respectively right continuous non-decreasing and left continuous non-increasing functions on $(M(k'-1), Mk']$. 
We define  
$$t_B = \inf\Set{t \in (M(k'-1),Mk']}{\Proba(B_t)>0}$$
and 
$$t_C = \sup\Set{t \in (M(k'-1),Mk']}{\Proba(C_t)>0}.$$ 
Let us prove that $M(k'-1)  \leq  t_B \leq t_C \leq Mk'$. If $M(k'-1)  =  t_B$, these inequalities are clearly true. Assume that $M(k'-1)  <  t_B $. Notice that $\nu_2([-\infty, t_B)) = 0$ by definition of $t_B$. Therefore, $t \mapsto \Proba(C_t)$ is constant on $( M(k'-1) , t_B) $ by definition of $C_t$. Since $t \mapsto \Proba(C_t)$ is non-increasing and non-zero on $(M(k'-1),Mk']$, we have $\Proba(C_{t_B-\varepsilon})>0$ for all small enough $\varepsilon >  0$, which implies that $t_B \leq t_C$ by definition of $t_C$.

If $ t_B < t_C $, then we take $t \in (t_B, t_C)$ and we have $\Proba(B_t)>0$ and $\Proba(C_t)>0$. If $t_B = t_C$ (which implies that $M(k'-1)<t_B$), we notice that 
$$\Proba(\{X< \widetilde X \}\cap\{  t_B < \widetilde X \leq Mk' \}) =0$$
by definition of $t_C$. In addition, \[\Proba(\{X< \widetilde X \}\cap\{ M(k'-1) < \widetilde X \leq Mk' \}) >0\] by definition of $k'$. Therefore, 
$$\Proba(\{X< \widetilde X\} \cap  \{M(k'-1) < \widetilde X \leq t_B \} ) > 0$$
Since
$$\Proba(\{X< \widetilde X\} \cap  \{M(k'-1) < \widetilde X \leq t_B \} ) = \Proba(\{X< \widetilde X\} \cap\{   \widetilde X = t_B \})$$
by definition of $t_B$, we conclude that  $\Proba(B_t)>0$ and $\Proba(C_t)>0$ for $t=t_B=t_C$. 
\end{proof}

\section{The case of probability distributions supported by two non-negative real numbers}
\label{sec:atomicMeasures}

In this section, we study the case where two non-negative real numbers support $\nu$. 

In Subsection \ref{subsec:2atom-caseinvk} and Subsection \ref{subsec:2atom-FKP}, we study the case where two elements $0 <m < M$ support $\nu$. We prove Theorem \ref{theorem:rationality2atoms} in two steps. 
The first step consists in proving the theorem for $m = \frac{M}{k}$ for some $k \in \N$. 
In this case, the MGS reduces to a Markov chain on a finite state space which makes explicit computations possible.
The second step consists in extending this result to all $0<m < M$. 
To do so, we use results from \cite{https://doi.org/10.48550/arxiv.2006.01727} in Subsection \ref{subsec:2atom-FKP} to show that for $\frac{M}{k+1} < m < \frac{M}{k}$, $C((1-p)\delta_{m} + p\delta_M)$ is obtained by linear interpolation of $C((1-p)\delta_{\frac{M}{k}} + p\delta_1)$ and $C((1-p)\delta_{\frac{M}{k+1}} + p\delta_1)$.

We give formulas for $C((1-p)\delta_m + p\delta_M)$ for values of $m\in [\frac{M}{5},5M]$ at the ends of Subsections \ref{subsec:2atom-caseinvk} and \ref{subsec:2atom-FKP}. In Subsection \ref{subsec:a=0}, we consider the case where $\nu$ is supported by $0$ and $M>0$ and we give an alternative proof of Theorem \ref{theorem:fromDutta} via the MGS. 

\subsection{Rationality in \texorpdfstring{$p $}{TEXT} for measures of the form \texorpdfstring{$(1-p)\delta_{\frac{M}{k}} + p\delta_M  $}{TEXT}}\label{subsec:2atom-caseinvk}

By the rescaling property \eqref{property:rescalingProperty}, it suffices to prove Theorem \ref{theorem:rationality2atoms} for $M =1 $. 
The following proposition is equivalent to Theorem \ref{theorem:rationality2atoms} in the particular case when $M=1$ and $m = \frac{1}{k}$ for some $k \in \N$.
\begin{proposition}\label{proposition:Capos}
For all $k \in \N$, $p \mapsto C((1-p)\delta_{\frac{1}{k}} + p\delta_1)$ is a rational function on $[0,1]$.
\end{proposition}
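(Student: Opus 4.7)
The plan is to reduce the problem to the analysis of a finite-state Markov chain whose transition probabilities are polynomials in $p$, and then read off rationality of the time constant from its stationary distribution. By the rescaling property \eqref{property:rescalingProperty}, it will suffice to prove that $p \mapsto C((1-p)\delta_1 + p\delta_k)$ is rational. Via the coupling of Proposition \ref{proposition:couplingProperty}, I pass to the MGS $(\lambda^{(n)})$ with weight distribution $(1-p)\delta_1 + p\delta_k$ and starting configuration $\delta_0$. Two elementary observations drive the argument: every particle lies at an integer position; and since the maximal weight is $k$, any particle at position $\leq \lambda_1^{(n)} - k$ can never contribute to the maximum at any future step and is therefore irrelevant for the front dynamics.

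These observations motivate introducing the reduced configuration
\[
S^{(n)} := \{\lambda_i^{(n)} - \lambda_1^{(n)} : 1 \leq i \leq \lambda^{(n)}(\R),\ \lambda_i^{(n)} > \lambda_1^{(n)} - k\} \subseteq \{-k+1,\dots,-1,0\}.
\]
Since each step strictly increases the front (the top particle alone is assigned a weight of at least $1$), the new particle never coincides with an existing one, so $S^{(n)}$ is always a genuine subset of $\{-k+1,\dots,0\}$ containing $0$. Thus $(S^{(n)})$ takes values in the finite set $\Sigma := \{S \subseteq \{-k+1,\dots,0\} : 0 \in S\}$, of cardinality $2^{k-1}$. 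Moreover, only the top $|S^{(n)}|$ particles can realize the maximum at step $n+1$, so the transition $S^{(n)} \mapsto S^{(n+1)}$ depends only on $S^{(n)}$ and on the first $|S^{(n)}|$ entries of $X^{(n+1)}$; hence $(S^{(n)})$ is a Markov chain on $\Sigma$ whose transition matrix $P(p)$ has polynomial entries in $p$ of degree at most $k$.

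For $p \in (0,1)$ the chain has a unique closed communicating class: the state $\{0\}$ is reached in a single step from any $S \in \Sigma$ when the first $|S|$ weights all equal $k$, and from $\{0\}$ every accessible subset of $\Sigma$ can be constructed by an explicit deterministic sequence of weights of positive probability. Hence the linear system $\pi P(p) = \pi$, $\sum_S \pi(S) = 1$ has a unique solution $\pi_p$ on this class, whose entries are rational functions of $p$ by Cramer's rule. Combining this with the stationary MGS of Proposition \ref{proposition:stationaryMGS} (or, equivalently, an ergodic theorem applied directly to $(S^{(n)})$) will yield the identity
\[
C((1-p)\delta_1 + p\delta_k) = \sum_{S \in \Sigma} \pi_p(S)\, J(S,p),
\]
where $J(S,p)$ is the expected front displacement given the current reduced state $S$ — a polynomial in $p$ of degree at most $|S|$. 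The right-hand side is then a rational function of $p$, and dividing by $k$ proves Proposition \ref{proposition:Capos}. The main obstacle I foresee is the careful verification of irreducibility needed to ensure that the denominator produced by Cramer's rule does not vanish on $(0,1)$, together with a separate treatment of the degenerate endpoints $p=0$ and $p=1$ to confirm that the resulting rational function behaves well on the entire closed interval $[0,1]$.
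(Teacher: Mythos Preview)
Your proposal is correct and follows essentially the same approach as the paper: rescale, reduce the MGS to a finite-state Markov chain by tracking only the particles within the maximal weight of the front (the paper keeps weights $\{1/k,1\}$ rather than $\{1,k\}$, but this is purely cosmetic), show a unique recurrent class via one-step accessibility of the singleton state $\{0\}$, and read off rationality of $C$ from the polynomial transition matrix together with the ergodic theorem. The paper additionally observes that the one-step front displacement is a \emph{deterministic} function of the next reduced state (namely $-y_2^{(n)}$), which slightly simplifies your expected-displacement term $J(S,p)$.
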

If $k=1$, then $C((1-p)\delta_{\frac{1}{k}} + p \delta_1)=C(\delta_1) =1$ is clearly a rational function of $p$. 

\begin{figure}[!t]
\begin{minipage}{.5\textwidth}
    \centering  
    \begin{tikzpicture}
  [
  middlearrow/.style 2 args={
        decoration={             
            markings, 
            mark=at position 0.5 with {\arrow[xshift=3.333pt]{triangle 45}, \node[#1] {#2};}
        },
        postaction={decorate}
    },
    particle/.style={circle, draw},
    minimum size=5pt
    ]
    
    \node[label=left:{$\lambda^{(5)} :$}] at (-0.5,-0.5) {};
    \draw[->,thick] (-0.5,-0.5) -- (5.5,-0.5);
    
    \draw[-,thick] (0,-0.4) -- (0,-0.6);
    \node (labelm1) at (0,-0.9) {$0$};
    
    \draw[-,thick] (1,-0.4) -- (1,-0.6);
    \node (labelm1) at (1,-0.9) {$\frac{1}{4}$};
    
    \draw[-,thick] (2,-0.4) -- (2,-0.6);
    \node (labelm1) at (2,-0.9) {$\frac{1}{2}$};
    
    \draw[-,thick] (3,-0.4) -- (3,-0.6);
    \node (labelm1) at (3,-0.9) {$\frac{3}{4}$};
    
    \draw[-,thick] (4,-0.4) -- (4,-0.6);
    \node (labelm1) at (4,-0.9) {$1$};
    
    \draw[-,thick] (5,-0.4) -- (5,-0.6);
    \node (labelm1) at (5,-0.9) {$\frac{5}{4}$};

    \node[particle,text=white] (1) at (0,0) {0};
    \node[particle,text=white] (2) at (2,0) {0};
    \node[particle,text=white] (3) at (3,0) {0};
    \node[particle,text=white] (4) at (5,0) {0};



\end{tikzpicture}
\end{minipage}
\begin{minipage}{.5\textwidth}
    \centering
    \begin{tikzpicture}
  [
  middlearrow/.style 2 args={
        decoration={             
            markings, 
            mark=at position 0.5 with {\arrow[xshift=3.333pt]{triangle 45}, \node[#1] {#2};}
        },
        postaction={decorate}
    },
    particle/.style={circle, draw},
    minimum size=5pt
    ]

\node[label=left:{$y^{(5)} :$}] at (10.5,-0.5) {};
    \draw[->,thick] (10.5,-0.5) -- (14.5,-0.5);
    
    \draw[-,thick] (11,-0.4) -- (11,-0.6);
    \node (labelm1) at (11,-0.9) {$-\frac{3}{4}$};
    
    \draw[-,thick] (12,-0.4) -- (12,-0.6);
    \node (labelm1) at (12,-0.9) {$-\frac{1}{2}$};
    
    \draw[-,thick] (13,-0.4) -- (13,-0.6);
    \node (labelm1) at (13,-0.9) {$-\frac{1}{4}$};
    
    \draw[-,thick] (14,-0.4) -- (14,-0.6);
    \node (labelm1) at (14,-0.9) {$0$};
    
    \node[particle,text=white] (4) at (14,0) {0};
    \node[particle,text=white] (4) at (12,0) {0};
    \node[particle,text=white] (4) at (11,0) {0};

\end{tikzpicture}
\end{minipage}
\caption{Illustration of the reduction of the MGS to a Markov chain on a finite state space when $M=1$ and $k=4$. Since the dynamics of the MGS depends only on the particles at a distance less than $1$ from the front, only particles at positions $\lambda^{(n)}_1, \lambda^{(n)}_1-0.25 $, $\lambda^{(n)}_1-0.5$ and $\lambda^{(n)}_1-0.75$ matter for the evolution of the system at time $n$ by Lemma \ref{lemma:keyLemma2Atoms}.}
\label{fig:5}
\end{figure}

Now, we fix $k \geq 2$ and we set $\nu = (1-p)\delta_{\frac{1}{k}} + p \delta_1$. 
By \eqref{eq:convCouplingFK}, it suffices to study the front position of $(\lambda^{(n)})_{n \geq 0}$ an MGS with weight distribution $\nu$ and starting configuration $\lambda^{(0)} = \delta_0$ in order to study $C(\nu)$. In our case, the study of the MGS is easier than in the general case by the following lemma:
\begin{lemma}\label{lemma:keyLemma2Atoms}
We consider a starting configuration $\lambda^{(0)} = \delta_0$ and $(X^{(n)}_j)_{n,j \in \N}$ i.i.d. random variables with distribution $\nu = (1-p)\delta_{\frac{1}{k}} + p \delta_1$. We consider $(\lambda^{(n)})_{n \in \N} $ the MGS defined by $ \lambda^{(n)} = \Phi_{X^{(n)}}(\lambda^{(n-1)})$ for all $n \in \N$. Then, for all $n \in \Z_+$, if $k_n = |\Set{i \in \llbracket 1, \lambda^{(n)}(\R)\rrbracket}{ \lambda^{(n)}_i > \lambda_1^{(n)} - 1}|$ is the number of particles at distance less than $1$ from the front at time $n$, 
\begin{equation}\label{eq:keyLemma2Atoms}
    \mathfrak{m}(\lambda^{(n)}, X^{(n+1)}) = \max_{i \in \llbracket 1 , k_{n}\rrbracket}(\lambda_{i}^{(n)} + X_i^{(n+1)}).
\end{equation}

As a consequence, almost surely, for all $n \in \N$, 
\begin{enumerate}[label=\emph{B.\arabic*}]
    \item \label{enum:Ck1} The dynamics of the MGS at each step depends only on the particles at a distance less than $1$ from the front. 
    \item  \label{enum:Ck2} The position $ \mathfrak{m}(\lambda^{(n-1)}, X^{(n)})$ of the new particle at time $n$ verifies: $$ \mathfrak{m}(\lambda^{(n-1)}, X^{(n)}) - \lambda^{(n-1)}_1 \in \lrSet{\frac{i}{k}}{i \in \llbracket 1 , k \rrbracket}. $$
    \item \label{enum:Ck3} The front moves at each step:  $ \lambda^{(n)}_1 = \mathfrak{m}(\lambda^{(n-1)}, X^{(n)})>\lambda^{(n-1)}_1$. 
    \item \label{enum:Ck4} All particles have positions in $\frac{1}{k}\Z$: $\lambda_j^{(n-1)} \in \frac{1}{k}\Z$ for all $n \in \N$ and $j \in \N$. 
    \item \label{enum:Ck5} There is at most one particle at a given position. 
\end{enumerate}
\end{lemma}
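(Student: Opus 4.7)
The plan is to extract the main identity \eqref{eq:keyLemma2Atoms} first and then deduce the five enumerated properties, relying throughout on the elementary fact that $X^{(n)}_j \in [1/k, 1]$ since $\nu$ is supported on $\{1/k, 1\}$. For \eqref{eq:keyLemma2Atoms}, I would argue that any $i > k_n$ satisfies $\lambda_i^{(n)} \leq \lambda_1^{(n)} - 1$ by the definition of $k_n$, hence
$$\lambda_i^{(n)} + X_i^{(n+1)} \leq \lambda_1^{(n)} - 1 + 1 = \lambda_1^{(n)},$$
whereas $\lambda_1^{(n)} + X_1^{(n+1)} \geq \lambda_1^{(n)} + 1/k > \lambda_1^{(n)}$. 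So the argmax in the definition of $\mathfrak{m}(\lambda^{(n)}, X^{(n+1)})$ is necessarily attained at some index $i \leq k_n$. This directly yields property \ref{enum:Ck1} (the dynamics only sees the top $k_n$ particles) and property \ref{enum:Ck3} (since $\mathfrak{m}(\lambda^{(n)}, X^{(n+1)}) \geq \lambda_1^{(n)} + 1/k > \lambda_1^{(n)}$, so the newly placed particle is the new front).

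Next I would establish property \ref{enum:Ck4} by induction on $n$. The base case $\lambda^{(0)} = \delta_0$ is immediate. For the inductive step, the new particle is placed at a position of the form $\lambda_i^{(n)} + X_i^{(n+1)}$ for some $i$, which is the sum of two elements of $\frac{1}{k}\Z$ by the induction hypothesis and the support of $\nu$. Property \ref{enum:Ck2} then follows by combining (a) the lower bound $\lambda_1^{(n+1)} - \lambda_1^{(n)} \geq 1/k$ coming from the proof of \ref{enum:Ck3}, (b) the upper bound $\lambda_1^{(n+1)} - \lambda_1^{(n)} \leq 1$ obtained from \eqref{eq:keyLemma2Atoms} since $\lambda_i^{(n)} \leq \lambda_1^{(n)}$ and $X_i^{(n+1)} \leq 1$ for every $i \leq k_n$, and (c) the fact that this increment lies in $\frac{1}{k}\Z$ by \ref{enum:Ck4}, which forces it into $\{1/k, 2/k, \dots, 1\}$.

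Property \ref{enum:Ck5} then follows from a second, very short induction: the base case holds trivially, and since \ref{enum:Ck3} places the new particle strictly above $\lambda_1^{(n)}$, which is an upper bound on the position of every existing particle in $\lambda^{(n)}$, no collision with a previously occupied position can occur.

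I do not anticipate a real obstacle here. The entire argument rests on the elementary observation that the maximum weight $1$ exactly matches the cutoff distance $1$ defining $k_n$, so that particles below the cutoff cannot beat the pairing of the front with its (positive) weight. Once this gap is noticed, the discreteness statements \ref{enum:Ck2} and \ref{enum:Ck4} are automatic from the rational structure of the support, and \ref{enum:Ck5} falls out from the strict monotonicity of the front.
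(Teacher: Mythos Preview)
Your proposal is correct and follows essentially the same route as the paper: the paper derives \eqref{eq:keyLemma2Atoms} from the observation that $X_1^{(n+1)}\geq\frac{1}{k}$ forces any index $i$ with $\lambda_i^{(n)}\leq\lambda_1^{(n)}-1$ to satisfy $\lambda_i^{(n)}+X_i^{(n+1)}\leq\lambda_1^{(n)}+X_1^{(n+1)}$, and then simply declares B.1--B.5 to be ``simple consequences of \eqref{eq:keyLemma2Atoms} and of the fact that $X_i^{(j)}\in\{\frac{1}{k},1\}$.'' You have merely spelled out those consequences via the two short inductions, which is exactly what the paper leaves to the reader.
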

\begin{proof}
We notice that $X_1^{(n+1)} \geq \frac{1}{k}$ for all $n \in \Z_+$. Therefore, if $\lambda_i^{(n)} \leq \lambda_1^{(n)} - 1$, then $ \lambda_i^{(n)} + X^{(n+1)}_i \leq \lambda_1^{(n)} + X^{(n+1)}_1 $. Therefore, by definition of $\mathfrak{m}$, we obtain \eqref{eq:keyLemma2Atoms}: $$ \mathfrak{m}(\lambda^{(n)}, X^{(n+1)}) = \max_{i \in \llbracket 1 , k_{n}\rrbracket}(\lambda_{i}^{(n)} + X_i^{(n+1)}).$$

Points \ref{enum:Ck1} to \ref{enum:Ck5} are simple consequences of \eqref{eq:keyLemma2Atoms} and of the fact that  $X_i^{(j)} \in \{ \frac{1}{k},1\}$ almost surely.
\end{proof}

For a configuration $\lambda \in \mathcal{N}$, we set $u(\lambda)$ the configuration obtained by removing from $\lambda$ all the particles at positions smaller or equal to $\lambda_1-1$  and by shifting all the particles so that the front position in $u(\lambda)$ is $0$. More explicitly, for all $\lambda \in \mathcal{N}$, $$ u(\lambda) = \sum_{i \in \N} \delta_{\lambda_i - \lambda_1} \mathds{1}_{\lambda_1 - \lambda_i < 1} .$$
See Figure \ref{fig:5} for an illustration.
By Lemma \ref{lemma:keyLemma2Atoms}, we can now define our Markov chain $(y^{(n)})_{n \in \Z_+}$:
\begin{proposition}\label{proposition:4.3}
Let us consider $(\lambda^{(n)})_{n \in \Z_+}$ an MGS with starting configuration $\lambda^{(0)} = \delta_0$ and measure $\nu = (1-p)\delta_{\frac{1}{k}} + p \delta_1$. If $y^{(n)} := u(\lambda^{(n)})$ for all $n \in \Z_+$, then $(y^{(n)})_{n \in \Z_+}$ is a Markov chain on the finite state space $E_k = \Set{ \delta_0 + \sum_{i = 1}^{k-1} x_i\delta_{\frac{-i}{k}}}{ (x_1, \dots, x_{k-1})\in \{0,1\}^{k-1}}$. In addition, there is a unique recurrent class for this Markov chain, and for all $n \in \N$, \begin{equation}\label{eq:propositionMC}
    \lambda^{(n)}_1 = \sum_{i = 1}^n -y^{(i)}_2.
\end{equation}   
\end{proposition}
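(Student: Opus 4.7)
The plan is to verify Proposition \ref{proposition:4.3} by combining the structural consequences of Lemma \ref{lemma:keyLemma2Atoms} with the definition of the centering map $u$.

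First I would check that $y^{(n)}\in E_k$ for all $n\in\Z_+$. Point \ref{enum:Ck4} of Lemma \ref{lemma:keyLemma2Atoms} gives $\lambda^{(n)}_j\in \frac{1}{k}\Z$ for all $j$, so after subtraction of $\lambda^{(n)}_1$ all particle positions of $y^{(n)}$ lie in $\frac{1}{k}\Z$; the truncation $\lambda_i>\lambda_1-1$ in the definition of $u$ restricts them further to $\{0,-1/k,\dots,-(k-1)/k\}$; finally point \ref{enum:Ck5} rules out multiplicities, so $y^{(n)}$ is the sum of $\delta_0$ and of a subset of the atoms $\delta_{-i/k}$ for $1\leq i\leq k-1$, that is, an element of $E_k$.

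Next I would prove the Markov property. Fix $n$ and set $d_n:=\mathfrak{m}(\lambda^{(n)},X^{(n+1)})-\lambda^{(n)}_1$. Equation \eqref{eq:keyLemma2Atoms} rewrites as
\[
d_n \;=\; \max_{1\leq i\leq k_n}\bigl(y^{(n)}_i+X^{(n+1)}_i\bigr),
\]
so $d_n$ is a measurable function of $y^{(n)}$ and of the finitely many variables $X^{(n+1)}_1,\dots,X^{(n+1)}_{k_n}$. The centered configuration after the jump is then
\[
y^{(n+1)} \;=\; \delta_0 \;+\; \sum_{i\,:\,y^{(n)}_i>d_n-1}\delta_{y^{(n)}_i-d_n},
\]
which is itself a deterministic function $\Psi(y^{(n)},X^{(n+1)})$. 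Since $(X^{(m)})_{m\geq 1}$ is i.i.d. and $X^{(n+1)}$ is independent of $\sigma(X^{(1)},\dots,X^{(n)})\supseteq\sigma(y^{(0)},\dots,y^{(n)})$, the Markov property on $E_k$ follows, with transition kernel $y\mapsto\mathrm{Law}(\Psi(y,X))$.

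For \eqref{eq:propositionMC} I would argue as follows. By \ref{enum:Ck3} the new particle added at step $n$ is strictly above $\lambda^{(n-1)}_1$, while all pre-existing particles lie at positions $\leq\lambda^{(n-1)}_1$; since \ref{enum:Ck5} forbids a second particle at $\lambda^{(n-1)}_1$, this forces $\lambda^{(n)}_1=\lambda^{(n-1)}_1+d_n$ and $\lambda^{(n)}_2=\lambda^{(n-1)}_1$, hence $y^{(n)}_2=\lambda^{(n)}_2-\lambda^{(n)}_1=-d_n$. Telescoping from $\lambda^{(0)}_1=0$ gives $\lambda^{(n)}_1=\sum_{i=1}^n d_i=\sum_{i=1}^n-y^{(i)}_2$, which is \eqref{eq:propositionMC}.

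Finally, for uniqueness of the recurrent class the key observation is that whenever $X^{(n+1)}_1=1$, which happens with probability $p$, the maximum in the display above is attained at $i=1$ with value $0+1=1$, so $d_n=1$ and every old atom, shifted by $-1$, leaves the window $(-1,0]$; hence $y^{(n+1)}=\delta_0$. Thus, as soon as $p>0$, the state $\delta_0$ is accessible in one step from every state of $E_k$, so every recurrent class contains $\delta_0$, giving uniqueness. (The degenerate case $p=0$ is immediate since the chain is then deterministic and absorbs at $\delta_0+\sum_{i=1}^{k-1}\delta_{-i/k}$.) No step is really delicate, and I expect the only mild subtlety to be making the description of $y^{(n+1)}$ from $y^{(n)}$ and $d_n$ explicit enough to conclude both the Markov property and the accessibility of $\delta_0$ in the same stroke.
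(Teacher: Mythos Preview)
Your argument is correct and follows essentially the same route as the paper: you deduce $y^{(n)}\in E_k$ from \ref{enum:Ck4}--\ref{enum:Ck5}, obtain the Markov property by exhibiting $y^{(n+1)}$ as a deterministic function of $(y^{(n)},X^{(n+1)})$ via \eqref{eq:keyLemma2Atoms} (the paper phrases this as $u\circ\Phi_{X^{(n)}}\circ u=u\circ\Phi_{X^{(n)}}$), and prove both \eqref{eq:propositionMC} and the accessibility of $\delta_0$ exactly as the paper does. The only slip is a harmless index mismatch: with your definition $d_n=\mathfrak{m}(\lambda^{(n)},X^{(n+1)})-\lambda^{(n)}_1$ one has $\lambda^{(n)}_1-\lambda^{(n-1)}_1=d_{n-1}$ and $y^{(n)}_2=-d_{n-1}$, not $-d_n$; the telescoping still yields \eqref{eq:propositionMC}.
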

\begin{proof}
It is easy to check that $ u(\Phi_{X^{(n)}}(\lambda^{(n-1)})) = u(\Phi_{X^{(n)}}(u(\lambda^{(n-1)})))$ for all $n \in \N$ since the dynamics of the MGS depends only on the particles at a distance less than $1$ from the particle(s) at the front by \eqref{eq:keyLemma2Atoms}. Therefore, $ u(\lambda^{(n)}) = u(\Phi_{X^{(n)}}(u(\lambda^{(n-1)}))) $  for all $n \in \N$. 
Then, $$ y^{(n)} = u \circ \Phi_{X^{(n)}}(y^{(n-1)} ) $$ for all $n \in \N$ by definition of $(y^{(n)})_{n \in \Z_+}$. Since $(X^{(n)})_{n \in \N}$ are i.i.d., $(y^{(n)})_{n \in \Z_+}$ is a Markov chain.

By \ref{enum:Ck2}, \ref{enum:Ck4}, \ref{enum:Ck5} from Lemma \ref{lemma:keyLemma2Atoms} and by construction of $(y^{(n)})_{n \in \Z_+}$,  $y^{(n)} \in E_k$ for all $n \in \Z_+$. 

We prove the uniqueness of the recurrent class by proving that, starting from any configuration in $E_k$ for the Markov chain, there is a non-zero probability for the next configuration to be $\delta_0$. For all $\lambda_0 \in E_k$, it is easy to compute that $\Proba(y^{(n)} = \delta_0 \mid y^{(n-1)} = \lambda_0) = \Proba( X^{(n)}_1 = 1 ) = p > 0 $ for all $n \in \N$.

It remains to prove \eqref{eq:propositionMC}. By \ref{enum:Ck3}, $\lambda^{(n)}_1 - \lambda^{(n-1)}_1 = \lambda^{(n)}_1 - \lambda^{(n)}_2$ for all $n \in \N$. As a consequence, $\lambda^{(n)}_1 = \sum_{i=1}^{n} (\lambda^{(i)}_1 - \lambda^{(i)}_2). $
Therefore, by construction of $(y^{(n)})_{n \in \Z_+}$, $\lambda^{(n)}_1 = \sum_{i=1}^{n} (y^{(i)}_1 -y^{(i)}_2). $
Since $y^{(i)}_1 =0$ for all $i \in \N$, we obtain \eqref{eq:propositionMC}.
\end{proof}

\begin{proof}[Proof of Proposition \ref{proposition:Capos}]
We can now complete the proof of Proposition \ref{proposition:Capos}. By Lemma \ref{lemma:keyLemma2Atoms}, there exists a unique stationary distribution for the Markov chain $(y^{(n)})$. Let us denote by $\mu_{p,k}$ this stationary distribution.  Then, by Proposition \ref{proposition:4.3} and \eqref{eq:convCouplingFK}, $$ \frac{1}{n}\sum_{i = 1}^n -y^{(i)}_2  \xrightarrow[n \rightarrow \infty]{a.s.} C(\nu)$$

By the ergodic theorem for Markov chains, $C(\nu) = \E[- \widetilde{y}_2]$ where $\widetilde{y}$ has distribution $\mu_{p,k}$. 
Since $(X_i^{(n)})_{i,n \in \N}$ are i.i.d. with distribution $\nu = (1-p)\delta_{\frac{1}{k}} + p \delta_1$, the transition probabilities for this Markov chain are polynomials in $p$. As a consequence, since one can obtain $\mu_{p,k}$ as an eigenvector for the eigenvalue $1$ of the transition matrix of $(y^{(n)})_{n \geq 1}$, its coefficients are rational functions in $p$. Therefore, the theorem is proved.
\end{proof}

Since $(y^{(n)})_{n\geq 1}$ is a Markov chain on a finite state space, one can explicitly compute the stationary distribution for this model by computating the eigenspace of the transition matrix for the eigenvalue $1$. As a consequence, we can give an explicit formula for $C((1-p)\frac{1}{k} + p\delta_1)$ for small values of $k$: 
 \begin{align*}
    &C(\delta_1) = 1 ,\\
    &C((1-q) \delta_1 +q\delta_{\frac{1}{2}}) = \frac{1}{2}(2-q) ,\\
    &C((1-q) \delta_1 +q\delta_{\frac{1}{3}}) = \frac{3+q-4q^2+q^3}{3(1+q-q^2)},\\
    &C((1-q) \delta_1 +q\delta_{\frac{1}{4}}) = \frac{4-3q+2q^2-q^4+4q^5-11q^6+7q^7-q^8}{4(1+q^3-q^4+2q^5-3q^6+q^7)},\\
&C((1-q) \delta_1 +q\delta_{\frac{1}{5}})= \frac{1}{5}(5+q-6q^2+11q^3-8q^4-7q^5+q^6+14q^7-23q^8+16q^9+23q^{10} -68q^{11}\\&+81q^{12} -68q^{13}+33q^{14}+28q^{15}-85q^{16}+90q^{17}-48q^{18}+12q^{19}-q^{20}) \times (1+q-q^2+2q^3-3q^5+q^6\\&+q^7-2q^8-q^9+10q^{10}-16q^{11}+12q^{12}-2q^{13}-11q^{14}+23q^{15}-27q^{16}+19q^{17}-7q^{18}+q^{19})^{-1}.
\end{align*}

\subsection{Rationality in \texorpdfstring{$p $}{TEXT} for measures of the form \texorpdfstring{$(1-p)\delta_{m} + p\delta_M  $}{TEXT} for \texorpdfstring{$0<m<M$}{TEXT}}
\label{subsec:2atom-FKP}

In this subsection, we complete the proof of Theorem \ref{theorem:rationality2atoms}. 
To do so, we recall the notion of skeleton points from 
\cite{https://doi.org/10.48550/arxiv.2006.01727}, which are the renovation events used by Foss and Konstantopoulos to prove the convexity of $m \mapsto C((1-p)\delta_m + p\delta_1)$ and to characterize its points of non-differentiability for all $p \in (0,1)$. 

We consider $(X_{i,j})_{i<j \in \Z}$ i.i.d. random variables with distribution $\nu=(1-p)\delta_m + p\delta_1$ with $m \leq 1$. An integer $ n \in \Z$ is a \emph{skeleton point on the left} if for every integer $k<n$, there exists a directed path from $k$ to $n$ made of edges with weight $1$. Similarly, $ n \in \Z$ is a \emph{skeleton point on the right} if for every integer $k>n$, there exists a directed path from $n$ to $k$ made of edges with weight $1$. A \emph{skeleton point} is a skeleton point on the left and the right.
In other words, $n \in \Z$ is a skeleton point if, for all $ a < n < b$,  there is a directed path of edges with weights $1$ from $a$ to $n$ and a directed path of edges with weights $1$ from $n$ to $b$. Analogously, one could define the notion of skeleton points in other weighted directed acyclic graphs. We give an example of skeleton points for a complete graph with $6$ vertices in Figure \ref{fig:6}.

\begin{remark}
    Notice that the renovation events that we use in Subsection \ref{subsec:statMGS}, which consist in triangular words, 
    precisely correspond to the skeleton points on the right when $\nu= (1-p)\delta_m + p\delta_1$ by Remark \ref{remark:interpretTr}.   
\end{remark}

\begin{figure}
    \centering
    \begin{tikzpicture}[
    middlearrow/.style 2 args={
        decoration={             
            markings, 
            mark=at position 0.5 with {\arrow[xshift=3.333pt]{triangle 45}, \node[#1] {#2};}
        },
        postaction={decorate}
    },
    scale=1.5
]

    \node[circle,fill=black,inner sep=0pt,minimum size=5pt,label=below:{$1$}] (1) at (8,0) {};
    \node[circle,fill=black,inner sep=0pt,minimum size=5pt,label=below:{$2$}] (2) at (10,0) {};
    \node[circle,fill=black,inner sep=0pt,minimum size=5pt,label=below:{$3$}] (3) at (12,0) {};
    \node[circle,fill=black,inner sep=0pt,minimum size=5pt,label=below:{$4$}] (4) at (14,0) {};
    \node[circle,fill=black,inner sep=0pt,minimum size=5pt,label=below:{$5$}] (5) at (16,0) {};
    \node[circle,fill=black,inner sep=0pt,minimum size=5pt,label=below:{$6$}] (6) at (18,0) {};    

    \draw[red,middlearrow={below}{$m$}] (1) -- (2);
    \draw[blue,middlearrow={below}{$1$}] (2) -- (3);
    \draw[blue,middlearrow={below}{$1$}] (3) -- (4);
    \draw[red,middlearrow={below}{$m$}] (4) -- (5);
    \draw[blue,middlearrow={below}{$1$}] (5) -- (6);

    \draw[blue] (1) edge[bend left, middlearrow={below}{$1$}] (3);
    \draw[blue] (2) edge[bend left, middlearrow={below}{$1$}] (4);
    \draw[blue] (3) edge[bend left, middlearrow={below}{$1$}] (5);
    \draw[red] (4) edge[bend left, middlearrow={below}{$m$}] (6);
    
    \draw[blue] (1) edge[bend right, middlearrow={below}{$1$}] (4);
    \draw[blue] (2) edge[bend right, middlearrow={below}{$1$}] (5);
    \draw[red] (3) edge[bend right, middlearrow={below}{$m$}] (6);

    \draw[red] (1) edge[bend left, middlearrow={above}{$m$}] (5);
    \draw[red] (2) edge[bend left, middlearrow={above}{$m$}] (6);
    
    \draw[red] (1) edge[bend right, middlearrow={below}{$m$}] (6);

\end{tikzpicture}
    \caption{Illustration of skeleton points in a graph with $6$ vertices. The red edges have weights $m$, the blue ones have weights $1$. In this graph, $2$ is a skeleton point on the right, $4$ is a skeleton point on the left, and $3$ is a skeleton point.}
    \label{fig:6}
\end{figure}

For $n_1 < n_2 \in \Z$, let $W_{n_1,n_2}$ be the weight of a heaviest path between the vertices $n_1$ and $n_2$ as defined in \eqref{eq:defWn2}. 
We also denote by $(\Gamma_i)_{i\in\Z}$ the sequence of all the skeleton points in $\Z$ with the convention that $\dots < \Gamma_{-1} < \Gamma_0 \leq 0  < \Gamma_1 < \dots  $. To prove Theorem \ref{theorem:rationality2atoms} for all $m \in (0,1]$, we use the following result from \cite{https://doi.org/10.48550/arxiv.2006.01727}: 
\begin{proposition}[{\cite[Proposition 4]{https://doi.org/10.48550/arxiv.2006.01727}}]\label{proposition:FKP4}
For all $0 < m \leq 1$ and $p\in]0,1]$, 
\begin{align*}
    C((1-p)\delta_m + p\delta_1) = \gamma_{1-p}^2 \E\left[ W_{\Gamma_1,\Gamma_2} \right],
\end{align*} where $\gamma_q = \prod_{n=1}^{\infty}(1-q^n)$ for all $q \in [0,1[$.  
\end{proposition}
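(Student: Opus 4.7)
The plan is a classical renewal-type argument in which the skeleton points serve as regeneration epochs. Three ingredients are needed: (i) an exact additivity $W_{\Gamma_i, \Gamma_j} = \sum_{k=i}^{j-1} W_{\Gamma_k, \Gamma_{k+1}}$ for $i<j$, which says that heaviest paths must pass through intermediate skeleton points; (ii) the computation of the intensity of the skeleton-point process as $\gamma_{1-p}^2$; and (iii) Birkhoff's ergodic theorem applied to the stationary environment $(X_{i,j})_{i<j\in\Z}$, combined with a Palm-inversion identity, to extract the limit.

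For (i), I would argue by exchange: consider a heaviest path $(a = i_0 < \cdots < i_r = b)$ that skips some skeleton point $n$, so $i_s < n < i_{s+1}$ for some $s$. By definition of skeleton point there is a path of $1$-weight edges from $i_s$ to $n$ of length $l \geq 1$, and a path of $1$-weight edges from $n$ to $i_{s+1}$ of length $r' \geq 1$. Replacing the edge $(i_s, i_{s+1})$ by the concatenation of these two $1$-weight paths produces a directed path from $a$ to $b$ whose weight exceeds $w_\pi$ by $(l+r') - X_{i_s, i_{s+1}} \geq 2 - 1 \geq 1$, contradicting the maximality of $\pi$. Iterating over all missed skeleton points yields (i).

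For (ii), Remark \ref{remark:interpretTr} identifies $\{n \text{ is an SP on the right}\}$ with $\{n \in \mathcal{R}\}$, whose probability equals $\prod_{k\geq 1}(1-(1-p)^k) = \gamma_{1-p}$ by the computation appearing in the proof of Lemma \ref{lemma:TpastFinite}. A symmetric time-reversed argument gives the same probability for being an SP on the left, and the two events depend on disjoint edge sets (edges $(i,j)$ with $n\leq i$, respectively $j\leq n$), so they are independent and $\Proba(n \text{ is an SP}) = \gamma_{1-p}^2$. For (iii), the environment is stationary and ergodic under the diagonal shift $\tau$; applying Birkhoff's ergodic theorem to the functional $f(\omega) = \mathds{1}_{\{0 \text{ is SP}\}} \cdot W_{0, \Gamma_1^+}(\omega)$ gives
\begin{equation*}
\frac{1}{n}\sum_{i:\, 0 \leq \Gamma_i < n} W_{\Gamma_i, \Gamma_{i+1}} \xrightarrow[n\to\infty]{\text{a.s.}} \gamma_{1-p}^2\, \E^0[W_{0, \Gamma_1^+}],
\end{equation*}
where $\E^0$ is the Palm expectation conditional on $0$ being a skeleton point. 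By (i) the left-hand sum telescopes to $W_{\Gamma_{I_n}, \Gamma_{J_n}}$ across the extreme skeleton points in $[0,n)$, and since the weights lie in $[m,1]$ and inter-SP gaps have finite mean $1/\gamma_{1-p}^2$, the boundary corrections $|W_n - W_{\Gamma_{I_n}, \Gamma_{J_n}}|$ are $o(n)$ a.s. Combined with $W_n/n \to C(\nu)$, this yields $C(\nu) = \gamma_{1-p}^2\, \E^0[W_{0, \Gamma_1^+}]$; a standard computation using the stationarity of $(X_{i,j})$ under spatial shift then identifies $\E^0[W_{0, \Gamma_1^+}]$ with $\E[W_{\Gamma_1, \Gamma_2}]$, closing the argument.

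The main obstacle is step (iii): specifically, justifying the Palm identification $\E^0[W_{0, \Gamma_1^+}] = \E[W_{\Gamma_1, \Gamma_2}]$ that brings the Birkhoff limit into the exact form appearing in the statement, together with the uniform control of the boundary contributions near the endpoints $0$ and $n$. Both are standard manoeuvres for stationary point processes of finite positive intensity, but they constitute the technical heart of the argument.
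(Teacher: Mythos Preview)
The paper does not prove this proposition: it is quoted verbatim from \cite{https://doi.org/10.48550/arxiv.2006.01727} and used as a black box in Section~\ref{sec:atomicMeasures}. Your renewal/Palm argument is the standard route to such a formula and is correct in outline; in fact the paper carries out a one-sided analogue of the same argument in Proposition~\ref{proposition:formulaErgodicMGS}, where only skeleton points on the right are used (yielding a single factor $\gamma_{1-p}$ rather than $\gamma_{1-p}^2$), with the ergodic theorem applied along the regeneration blocks.

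Your step (i) is the key structural fact and the exchange argument is clean: skipping a skeleton point $n$ lets one replace a single edge of weight $\leq 1$ by at least two weight-$1$ edges, a strict improvement. Step (ii) is also correct; the left and right skeleton-point events at $n$ depend on the disjoint edge sets $\{(i,j):j\leq n\}$ and $\{(i,j):i\geq n\}$ respectively, hence are independent with common probability $\gamma_{1-p}$ by the computation in the proof of Lemma~\ref{lemma:TpastFinite}.

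For step (iii), the Palm identification you flag as the obstacle can in fact be bypassed. Decompose $W_{0,n}=W_{0,\Gamma_1}+\sum_{k=1}^{K_n-1}W_{\Gamma_k,\Gamma_{k+1}}+W_{\Gamma_{K_n},n}$ directly via (i), note that the block sequence $\bigl((X_{i,j})_{\Gamma_k\leq i<j\leq \Gamma_{k+1}}\bigr)_{k\geq 1}$ is stationary and ergodic (exactly as the paper asserts in the proof of Proposition~\ref{proposition:formulaErgodicMGS}), and apply the ergodic theorem along this sequence together with $K_n/n\to\gamma_{1-p}^2$. The two boundary terms are $o(n)$ almost surely since $\Gamma_1<\infty$ is fixed and $W_{\Gamma_{K_n},n}\leq n-\Gamma_{K_n}\leq \Gamma_{K_n+1}-\Gamma_{K_n}$. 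This gives $C(\nu)=\gamma_{1-p}^2\,\E[W_{\Gamma_1,\Gamma_2}]$ without invoking Palm calculus explicitly; the stationarity of the block sequence is precisely the statement that the interval $(\Gamma_1,\Gamma_2]$ under $\Proba$ has the Palm law, so both phrasings amount to the same thing.
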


By Proposition \ref{proposition:FKP4}, we only need to study the expectation of the heaviest path between two consecutive skeleton points.  For $n\in\N$, let $\SP_n$ be the set of weights $(x_{i,j})_{1 \leq i < j \leq n} \in \{m,1\}^{\binom{n}{2}}$ such that $1$ and $n$ are the only two skeleton points for those edge weights in a complete graph with $n$ vertices. Let us consider $\SP := \cup_{n \geq 2}\SP_n $. For $x = (x_{i,j})_{1 \leq i < j \leq n} \in \SP$ and $\pi=(i_1, \dots , i_k)$ a directed path from $i_1 = 1$ to $i_k = n$,  
denote by $N_x(\pi)= |\Set{j \in \llbracket 1, k-1 \rrbracket}{x_{i_j,i_{j+1}}=1}|$ and $\overline{N}_x(\pi)= |\Set{j \in \llbracket 1, k-1 \rrbracket}{x_{i_j,i_{j+1}}=m} |$ 
the respective numbers of edges with weights $1$ and $m$ in the path $\pi$, and denote by $w_x(\pi) = \sum_{c=1}^{k-1} x_{i_c,i_{c+1}}$  the weight of the path $\pi$. 
For $\pi_{\max}$ a directed path from $1$ to $n$ maximising $w_x$, we set $N_x = \overline{N}_x(\pi_{\max})$ and $N_x = \overline{N}_x(\pi_{\max})$. With those notations, $W_{\Gamma_1, \Gamma_2} = N_{X'} + m \overline{N}_{X'}$, where $X' = (X_{i,j})_{\Gamma_1 \leq i < j \leq \Gamma_2}$.

In general, $N_{X'}$ and $\overline{N}_{X'}$ are not uniquely defined since there could be several ways to obtain the maximal weight in terms of the number of edges with weight $1$ and $m$ in a maximal path for fixed weights. Yet, it is possible to show that those two quantities are uniquely defined for all $0< m \leq 1$ such that $m$ is not the reciprocal of an integer greater or equal to $2$. 
If $\Proba_{p,m}( N  \text{ and } \overline{N} \text{ are not uniquely defined})>0$ for some $p\in (0,1)$, then $m$ is called a \emph{critical point}. It has been proved in \cite{https://doi.org/10.48550/arxiv.2006.01727} that the positive critical points are all integers greater than $1$ and their reciprocals: 
\begin{proposition}[{\cite[Theorem 5]{https://doi.org/10.48550/arxiv.2006.01727}}]\label{proposition:critPoint}
For $0<m\leq 1$, $m$ is critical if and only if $m = \frac{1}{k}$ for some integer $k \geq 2$. 
\end{proposition}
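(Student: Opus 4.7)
The plan is to prove both implications separately, reducing the question to a combinatorial analysis of maximum-weight paths.

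For the \emph{sufficiency} direction ($m = 1/k$ critical), the strategy is to construct explicitly a weight configuration $x \in \SP_n$ for some $n = n(k)$, with positive $\Proba_{p,m}$-probability, that admits two distinct maximum-weight paths from $1$ to $n$ with different $(N_x, \overline{N}_x)$ pairs. The key identity is $k \cdot m = 1$: any chain of $k$ consecutive edges of weight $m$ carries the same total weight as any single edge of weight $1$. I would pick two vertices $u < v$ with $v - u = k$, set the shortcut edge $(u,v)$ to weight $1$ and the consecutive chain $(u, u{+}1), \ldots, (v{-}1, v)$ entirely to weight $m$, and then choose the remaining edges to guarantee simultaneously that $1$ and $n$ are skeleton points (via $1$-edges from $1$ to every vertex and from every vertex to $n$) and that no intermediate vertex is a skeleton point (blocking the relevant $1$-edge reachability using $m$-edges). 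A direct verification then produces two max-weight paths whose $(N, \overline{N})$ counts differ by $(1, -k)$.

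For the \emph{necessity} direction (if $m \neq 1/k$ then $m$ is not critical), the goal is to show that $(N_x, \overline{N}_x)$ is deterministic on all of $\SP$. If $m$ is irrational, this is immediate: two paths of equal weight satisfy $N_1 - N_2 = m(\overline{N}_2 - \overline{N}_1)$, whose left-hand side is integer-valued and whose right-hand side is an irrational multiple of an integer, forcing $N_1 = N_2$ and $\overline{N}_1 = \overline{N}_2$. If $m = a/b$ in lowest terms with $a \geq 2$, coprimality yields $a \mid (N_1 - N_2)$ and $b \mid (\overline{N}_1 - \overline{N}_2)$, so distinct counts require the two paths to differ by at least $a \geq 2$ edges of weight $1$ and at least $b$ edges of weight $m$. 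The plan is then an uncrossing argument: decompose two putatively maximum paths $\pi_1, \pi_2$ along their shared vertices. By maximality, each pair of local sub-paths has equal weight, and by the divisibility bound some local divergence must involve at least $a$ unit-weight edges on one branch against at least $b$ edges of weight $m$ on the other. Splicing the two local branches should then either produce a strictly heavier global path (contradicting maximality) or force a new skeleton point in the interior (contradicting $x \in \SP_n$).

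The main obstacle is the concluding step of the necessity direction when $a \geq 2$. The case $a = 1$ is precisely what the sufficiency construction exploits via the single-edge versus $k$-chain swap; for $a \geq 2$, one must rule out higher-order balanced exchanges of $a$ unit-weight edges against $b$ edges of weight $m$ between common endpoints on otherwise disjoint sub-paths. Making this enumeration precise, and showing that the resulting configurations form a $\Proba_{p,m}$-null subset of $\SP$, is the technical heart of the argument.
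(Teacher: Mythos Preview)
First, note that the paper does not give its own proof of this proposition: it is quoted as Theorem~5 of \cite{https://doi.org/10.48550/arxiv.2006.01727}, so there is no in-paper argument to compare your proposal against.

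Your sufficiency sketch is reasonable in outline, though the construction needs care: putting $1$-edges from vertex $1$ to all others and from all vertices to $n$ does make $1$ and $n$ skeleton points, but you must also ensure that the two paths you exhibit (the single $1$-edge $(u,v)$ versus the chain of $k$ edges of weight $m$, each extended to a full path from $1$ to $n$) are genuinely \emph{maximum}-weight overall, not merely equal to each other. This requires controlling all competing paths, which is doable but not automatic from what you wrote.

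The necessity direction has a real gap, and your final paragraph essentially concedes it. More importantly, the framing ``the resulting configurations form a $\Proba_{p,m}$-null subset of $\SP$'' is conceptually wrong: the edge weights take values in the two-point set $\{m,1\}$, so each $\SP_n$ is finite and every single configuration $x\in\SP_n$ has strictly positive probability $p^{|\{1\text{-edges}\}|}(1-p)^{|\{m\text{-edges}\}|}$. Hence non-criticality is a purely combinatorial claim --- \emph{no} $x\in\SP$ admits two maximum paths with different $(N,\overline N)$ --- and cannot be salvaged by a measure-zero argument. Your uncrossing heuristic does not close the gap either: if two maximum paths differ on some segment by $a$ unit-edges against $b$ edges of weight $m$, those two segments have \emph{equal} weight by hypothesis, so splicing cannot produce a strictly heavier path; and you give no mechanism by which an interior skeleton point would be forced. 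The divisibility observation $a\mid(N_1-N_2)$, $b\mid(\overline N_1-\overline N_2)$ is correct but by itself does not exclude such balanced exchanges when $a\geq 2$.
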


Since the probability that $X' = x$ is positive for all $x \in \SP$ and $p\in (0,1)$, there is a deterministic definition of the critical points: 
$m$ is a critical point if and only if there exists $x \in \SP$ and $\pi_1 =(i_1, \dots, i_d)$ and $\pi_2=(j_1, \dots, j_r)$ two directed paths with $i_1 = j_1 = 1$ and $i_d = j_r = n$ such that $w_x(\pi_1) = w_x(\pi_2) = \max_{\pi} w_x(\pi)$ and $ N_x(\pi_1) \neq N_x(\pi_2).$
With our notations, notice that $\max_{\pi}w_x(\pi)  = N_x + m \overline{N}_x.$

In Subsection \ref{subsec:2atom-caseinvk}, we proved that $p \mapsto C((1-p)\delta_m + p \delta_1)$ is a rational function for any critical point $0<m<1$. 
The following lemma enables us to extend this result to all $m\in (0,1)$. 

\begin{lemma}\label{coro:FKP-NNbar}
For any $k \in \N$ and $x \in \SP$, $N_x$ and $\overline{N}_x$ are constant functions of $m$ on $(\frac{1}{k+1},\frac{1}{k})$. 
\end{lemma}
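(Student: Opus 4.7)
The plan is to regard each $x \in \SP$ as an assignment of labels ``$1$'' or ``$m$'' to the edges that is itself independent of the numerical value of $m$, so that for every directed path $\pi$ from $1$ to $n$ the two integers $a_\pi := N_x(\pi)$ and $b_\pi := \overline{N}_x(\pi)$ are also $m$-independent. For $m \in (0,1]$, the maximum path weight is then the convex, piecewise-affine function
$$W_x(m) := \max_{\pi}\bigl(a_\pi + m\, b_\pi\bigr),$$
and $(N_x, \overline{N}_x)$ is by definition the common value of $(a_\pi, b_\pi)$ taken over any path $\pi$ attaining this maximum.

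First I would show that on any open subinterval $I \subseteq (0,1]$ containing no breakpoint of $W_x$, the pair $(N_x, \overline{N}_x)$ is constant. On such an interval $W_x$ coincides with a single affine map $m \mapsto \alpha + m\beta$; if some path $\pi'$ with $(a_{\pi'}, b_{\pi'}) \neq (\alpha, \beta)$ were also optimal at some $m_0 \in I$, the two distinct affine functions $a_{\pi'} + m\, b_{\pi'}$ and $\alpha + m\beta$ would agree only at $m_0$, producing a breakpoint there, a contradiction. Hence throughout $I$ the maximising pair is unambiguously $(\alpha,\beta)$.

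It then remains to locate the breakpoints of $W_x$ inside $(0,1]$. At a breakpoint $m_0$, two optimal paths $\pi, \pi'$ must have different slopes $b_\pi \neq b_{\pi'}$, and because $m_0 > 0$ the equality $a_\pi + m_0 b_\pi = a_{\pi'} + m_0 b_{\pi'}$ automatically forces $a_\pi \neq a_{\pi'}$, i.e.\ $N_x(\pi) \neq N_x(\pi')$. This is exactly the deterministic criterion for criticality recalled just before Proposition~\ref{proposition:critPoint}, so $m_0$ is a critical point, and Proposition~\ref{proposition:critPoint} then places $m_0$ in the discrete set $\{1/k : k \geq 2\}$, which is disjoint from $(1/(k+1), 1/k)$. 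Combining the two steps yields the claimed constancy of $N_x$ and $\overline{N}_x$ on this interval. The main delicate point is the clean correspondence ``non-uniqueness of the maximising $(a_\pi, b_\pi)$ pair $\Longleftrightarrow$ breakpoint of $W_x$''; once it is in place, Proposition~\ref{proposition:critPoint} does all the remaining work.
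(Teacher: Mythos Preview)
Your argument is correct and takes a genuinely different route from the paper's own proof. The paper fixes a reference point $c \in (\tfrac{1}{k+1},\tfrac{1}{k})$, defines the set $U_{x,c}$ of parameters $m$ at which $(N_{x(m)},\overline{N}_{x(m)}) = (N_{x(c)},\overline{N}_{x(c)})$, and then shows $U_{x,c}$ is nonempty, closed and open in the interval (using continuity of $m \mapsto \max_\pi w_{x(m)}(\pi)$ together with non-criticality and a compactness/extraction step), concluding by connectedness. You instead exploit directly that $W_x(m)=\max_\pi(a_\pi+mb_\pi)$ is a maximum of finitely many affine functions, hence convex piecewise-affine with finitely many breakpoints; you identify breakpoints with values of $m$ witnessing the deterministic criticality criterion, so Proposition~\ref{proposition:critPoint} confines them to $\{1/j:j\geq 2\}$, leaving the open interval breakpoint-free; and you observe that on a breakpoint-free interval every optimal path must realise the single affine piece, forcing $(N_x,\overline{N}_x)$ to be constant (and, incidentally, uniquely defined). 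Your approach is more geometric and arguably more transparent about \emph{why} nothing changes between consecutive $1/j$'s, while the paper's clopen argument is more abstract but would adapt with no change if one did not know in advance that the set of paths is finite. One small expository point: in your first step, the phrase ``producing a breakpoint there'' is slightly imprecise---what you really use is that if $(a_{\pi'},b_{\pi'})\neq(\alpha,\beta)$ and both are optimal at $m_0$, then $b_{\pi'}\neq\beta$, and on one side of $m_0$ the line $a_{\pi'}+mb_{\pi'}$ exceeds $\alpha+m\beta$, contradicting $W_x\equiv\alpha+m\beta$ on $I$. This is implicit in what you wrote but could be made explicit.
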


\begin{proof}
Consider an integer $k\geq 1$ and $x=x(m)\in\SP_n$ for some $n \in \N$. By Proposition \ref{proposition:critPoint}), every element $m \in (\frac{1}{k+1}, \frac{1}{k})$ is non-critical. Therefore, $N_{x(m)}$ and $\overline{N}_{x(m)}$ are well-defined for all $m \in (\frac{1}{k+1}, \frac{1}{k})$.
Let us fix $c$ an element of $(\frac{1}{k+1},\frac{1}{k})$ (for instance, $c:=(\frac{1}{k+1}+\frac{1}{k})/2$) and set $$U_{x,c}=\lrSet{ m \in \left(\frac{1}{k+1},\frac{1}{k}\right)}{ N_{x(m)}=N_{x(c)} \text{ and } \overline{N}_{x(m)}=\overline{N}_{x(c)}}.$$ 
To prove Lemma \ref{coro:FKP-NNbar}, let us show that $U_{x,c} = (\frac{1}{k+1},\frac{1}{k})$. We show that $U_{x,c}$ is non-empty, closed and open relatively to $(\frac{1}{k+1},\frac{1}{k})$. The first point is clear since $c \in U_{x,c}$. 

Firstly, let us show that $U_{x,c}$ is closed relatively to $(\frac{1}{k+1},\frac{1}{k})$.
If $(m_l)_{l \geq 1} \in (U_{x,c})^{\N}$ converges to some $m \in (\frac{1}{k+1},\frac{1}{k})$ as $l$ tends to infinity, then $\max_{\pi}w_{x(m_l)}(\pi)$ converges to $\max_{\pi}w_{x(m)}(\pi)$ by continuity of $m \mapsto \max_{\pi}w_{x(m)}(\pi)$, where we take the $\max$ over all directed paths starting at $1$ and ending at $n$. Since $m_l \in U_{x,c}$ for all $l \geq 1$, $\max_{\pi}w_{x(m_l)}(\pi) = N_{x(c)} + m_l \overline{N}_{x(c)}$, which converges to $ N_{x(c)} + m \overline{N}_{x(c)}$ as $l$ tends to infinity. Therefore, $\max_{\pi}w_{x(m)}(\pi) = N_{x(c)} + m \overline{N}_{x(c)}$. By the non-criticality of $m$ and the definition of $N$ and $\overline{N}$, we have proved that $m \in U_{x,c}$, since there is a path with $N_{x(c)}$ edges with weight $1$ and $\overline{N}_{x(c)}$ edges with weight $m$ in the sequence $x$. 
Then, $U_{x,c}$ is closed relatively to $(\frac{1}{k+1},\frac{1}{k})$. 

Now, we prove  that $U_{x,c}$ is open relatively to $(\frac{1}{k+1},\frac{1}{k})$. By contradiction, let us assume there exists $m \in U_{x,c}$ and $(m_l)_{n \geq 1} \in (\frac{1}{k+1},\frac{1}{k})^{\N}$ such that $m_l$ converges to $m$ as $n$ tends to infinity and such that $m_l \notin U_{x,c}$ for all $n\geq 1$. Since $N_{x(m_l)}$ and $\overline{N}_{x(m_l)}$ 
are bounded by the number of vertices in the graph with weights $x$, there is an extraction $(m_{\varphi(l)})_l$ of $(m_l)_l$  such that $(N_{x(m_{\varphi(l)})})$ and $(\overline{N}_{x(m_{\varphi(l)})})$ both converge. Let us denote their respective limits by $n_0$ and $\overline{n}_0$. Since those sequences are integer-valued, they are equal to respectively $n_0$ and $\overline{n}_0$ for $n$ large enough. Then, $ \max_{\pi} w_{x(m)} (\pi) = n_0 + m \overline{n}_0 $ by continuity of $m \mapsto \max_{\pi}w_{x(m)}(\pi)$. By definition of $N$ and $\overline{N}$, and since $m$ is non-critical, we have $N_{x(m)}=n_0$ and $\overline{N}_{x(m)}=\overline{n}_0$, which contradicts the fact that $m_l \notin U_{x,c}$. As a consequence,  $U_{x,c}$ is open relatively to $(\frac{1}{k+1},\frac{1}{k})$. 

Since $U_{x,c}$ is open and closed relatively to $(\frac{1}{k+1},\frac{1}{k})$ which is connected, $U_{x,c}=(\frac{1}{k+1},\frac{1}{k})$. Equivalently, $m \mapsto N_{x,m}$ and $m \mapsto \overline{N}_{x,m}$ are constant functions on $(\frac{1}{k+1},\frac{1}{k})$.
\end{proof}

\begin{proof}[Proof of Theorem \ref{theorem:rationality2atoms}]
We consider an integer $k \geq 1$ and $p\in (0,1)$. 
By Lemma \ref{coro:FKP-NNbar}, if $\Gamma_1 < \Gamma_2$ are the first two positive skeleton points, then $ \E_{p,m}\left[N\right] $ and $\E_{p,m}\left[\overline{N}\right]$ are constant functions of $m$ on $(\frac{1}{k+1},\frac{1}{k})$ for fixed $p$. Therefore, by Proposition \ref{proposition:FKP4}, if we set 
$f_{\frac{1}{k}}(p) = \gamma_{1-p}^{2} \E_{p,c}\left[N\right]$ and $g_\frac{1}{k}(p) = \gamma_{1-p}^{2} \E_{p,c}\left[\overline{N}\right]$ where $c = \frac{1}{2}\left(\frac{1}{k}+\frac{1}{k+1}\right)$, then \begin{align}\label{formulaToExtend}
    C((1-p)\delta_m + p\delta_1) = f_{\frac{1}{k}}(p) +  m \cdot g_{\frac{1}{k}}(p)
\end{align} for all $m \in (\frac{1}{k+1}, \frac{1}{k})$ and $p \in (0,1)$. Since $m \mapsto C((1-p)\delta_m + p\delta_1)$ is continuous on $\R$ according to Theorem \ref{theorem:continuity}, \eqref{formulaToExtend} is also true for $m \in [\frac{1}{k+1}, \frac{1}{k}]$ and $p \in [0,1]$. Therefore, considering \eqref{formulaToExtend} with $m = \frac{1}{k} $ and then with $m=\frac{1}{k+1}$, we get 
\begin{align*}
\left\{
    \begin{array}{lllll}
        C((1-p)\frac{1}{k} + p\delta_1) &= &f_{\frac{1}{k}}(p) &+ &  {\frac{1}{k}} \cdot g_{\frac{1}{k}}(p) \\
        C((1-p)\frac{1}{k+1} + p\delta_1) &= &f_{\frac{1}{k}}(p) &+ &{\frac{1}{k+1}} \cdot g_{\frac{1}{k}}(p),
    \end{array}
\right.
\end{align*}
which is equivalent to \begin{align}\label{eqCaPos}
\left\{
    \begin{array}{lll}
        f_{\frac{1}{k}}(p) &= & (k+1) \cdot C((1-p)\frac{1}{k+1} + p\delta_1) - k \cdot C((1-p)\frac{1}{k} + p\delta_1) \\
        g_{\frac{1}{k}}(p) &= & k(k+1)(C((1-p)\frac{1}{k} + p\delta_1)-C((1-p)\frac{1}{k+1} + p\delta_1)).
    \end{array}
\right.
\end{align}
By Proposition \ref{proposition:Capos}, $p \mapsto C((1-p)\delta_m + p \delta_1)$ is a rational function for all $m = \frac{1}{k}$ with $k \geq 1$. Then, $f_{\frac{1}{k}}$ and $g_{\frac{1}{k}}$ are also rational functions on $(0,1)$ by \eqref{eqCaPos}, which implies that $p\mapsto C((1-p)\delta_m + p \delta_1)$ is rational on $(0,1)$. By Theorem \ref{theorem:analyticCpmu}, this function is continuous at $p=1$. By the rescaling property and Theorem \ref{theorem:analyticCpmu}, it is also continuous at $p=0$. Therefore, $p\mapsto C((1-p)\delta_m + p \delta_1)$ is rational on $[0,1]$.  
\end{proof}

Since $C((1-p)\delta_{\frac{1}{k}} + p \delta_1)$ has been computed for $k \in \llbracket 1 , 5\rrbracket$ (c.f. Subsection \ref{subsec:2atom-caseinvk}),  we obtain the following formulas for $f_{\frac{1}{k}}$ and $g_{\frac{1}{k}}$ by \eqref{eqCaPos}: 
\begin{align*}
     f_1(1-q) &= 1-q \text{, }\quad &g_1(1-q) &= q,\\
     f_{\frac{1}{2}}(1-q) &= \frac{(1-q)(1+q)}{1+q-q^2}  \text{, }\quad &g_{\frac{1}{2}}(1-q) &=  \frac{q(1-q+q^2)}{1+q-q^2}, \\
     f_{\frac{1}{3}}(1-q) &= \frac{(1-q)(1+q+q^3+2q^5-2q^6-q^7+q^8)}{(1+q-q^2)(1+q^3-q^4+2q^5-3q^6+q^7)} \text{, } \quad &g_{\frac{1}{3}}(1-q) &= \frac{q(1-q+q^2)(1+q^3-4q^4+5q^5-3q^6+q^7)}{(1+q-q^2)(1+q^3-q^4+2q^5-3q^6+q^7)}.
\end{align*}
One could also compute $f_{\frac{1}{4}}(p)$ and $g_{\frac{1}{4}}(p)$ using \eqref{eqCaPos} and the formulas we give for $C((1-p)\delta_{\frac{1}{4}} + p \delta_1)$ and $C((1-p)\delta_{\frac{1}{5}} + p \delta_1)$. As a consequence, we get an explicit formula for $p \mapsto C((1-p)\delta_m + p \delta_1)$, for $m \in [\frac{1}{k+1},\frac{1}{k} ]$ by \eqref{formulaToExtend}.

\begin{remark}\label{remark:Ck}
By the rescaling property \eqref{property:rescalingProperty} and \eqref{formulaToExtend}, $C((1-p)\delta_m + p\delta_1) = g_{\frac{1}{k}}(p) +  m \cdot f_{\frac{1}{k}}(p)$ for all $k \geq 2$ and $m \in [k , k+1]$. 
\end{remark}

\subsection{A formula for the time constant for measures of the form \texorpdfstring{$\nu = (1-p)\delta_0 + p\delta_M $}{TEXT}}
\label{subsec:a=0}

One can give an explicit formula for $p \mapsto C((1-p)\delta_{0} + p \delta_1)$. Dutta first found such a formula in \cite{dutta2020limit}. We give here an alternative proof of Theorem \ref{theorem:fromDutta} via the coupling with the MGS with weights $0$ and $1$. 
\begin{figure}[!t]
\centering
\minipage{.33\textwidth}
    \centering
    \begin{tikzpicture}[
    middlearrow/.style 2 args={
        decoration={             
            markings, 
            mark=at position 0.5 with {\arrow[xshift=3.333pt]{triangle 45}, \node[#1] {#2};}
        },
        postaction={decorate}
    },
    particle/.style={circle, draw},
    minimum size=5pt,
    scale=0.95
]
    \node[label=left:{$\lambda^{(4)} :$}] at (8.5,-0.5) {};
    \draw[->,thick] (8.5,-0.5) -- (11.5,-0.5);
    
    \draw[-,thick] (9,-0.4) -- (9,-0.6);
    \node (labelm1) at (9,-0.8) {$0$};
    
    \draw[-,thick] (10,-0.4) -- (10,-0.6);
    \node (labelm2) at (10,-0.8) {$1$};
    
    \draw[-,thick] (11,-0.4) -- (11,-0.6);
    \node (labelm3) at (11,-0.8) {$2$};

    
    \node[particle,text=white] (1) at (9,0) {0};
    \node[particle,text=white] (2) at (9,0.75) {0};
    \node[particle,text=white] (3) at (10,0) {0};
    \node[particle,text=white] (4) at (10,0.75) {0};
    \node[particle,text=white] (5) at (10,1.5) {0};

\end{tikzpicture}
\endminipage
\minipage{.33\textwidth}
    \centering
    \begin{tikzpicture}[
    middlearrow/.style 2 args={
        decoration={             
            markings, 
            mark=at position 0.5 with {\arrow[xshift=3.333pt]{triangle 45}, \node[#1] {#2};}
        },
        postaction={decorate}
    },
    particle/.style={circle, draw},
    minimum size=5pt,
    scale=0.95
]
    \node[label=left:{$\lambda^{(5)} :$}] at (8.5,-0.5) {};
    \draw[->,thick] (8.5,-0.5) -- (11.5,-0.5);
    
    \draw[-,thick] (9,-0.4) -- (9,-0.6);
    \node (labelm1) at (9,-0.8) {$0$};
    
    \draw[-,thick] (10,-0.4) -- (10,-0.6);
    \node (labelm2) at (10,-0.8) {$1$};
    
    \draw[-,thick] (11,-0.4) -- (11,-0.6);
    \node (labelm3) at (11,-0.8) {$2$};
    
    
    \node[particle,text=white] (1) at (9,0) {0};
    \node[particle,text=white] (2) at (9,0.75) {0};
    \node[particle,text=white] (3) at (10,0) {0};
    \node[particle,text=white] (4) at (10,0.75) {0};
    \node[particle,text=white] (5) at (10,1.5) {0};
    \node[particle,text=white] (6) at (11,0) {0};

\end{tikzpicture}
\endminipage
\minipage{.33\textwidth}
    \centering
    \begin{tikzpicture}[
    middlearrow/.style 2 args={
        decoration={             
            markings, 
            mark=at position 0.5 with {\arrow[xshift=3.333pt]{triangle 45}, \node[#1] {#2};}
        },
        postaction={decorate}
    },
    particle/.style={circle, draw},
    minimum size=5pt,
    scale=0.95
]
    \node[label=left:{$\lambda^{(6)} :$}] at (8.5,-0.5) {};
    \draw[->,thick] (8.5,-0.5) -- (11.5,-0.5);
    
    \draw[-,thick] (9,-0.4) -- (9,-0.6);
    \node (labelm1) at (9,-0.8) {$0$};
    
    \draw[-,thick] (10,-0.4) -- (10,-0.6);
    \node (labelm2) at (10,-0.8) {$1$};
    
    \draw[-,thick] (11,-0.4) -- (11,-0.6);
    \node (labelm3) at (11,-0.8) {$2$};

    
    \node[particle,text=white] (1) at (9,0) {0};
    \node[particle,text=white] (2) at (9,0.75) {0};
    \node[particle,text=white] (3) at (10,0) {0};
    \node[particle,text=white] (4) at (10,0.75) {0};
    \node[particle,text=white] (5) at (10,1.5) {0};
    \node[particle,text=white] (6) at (11,0) {0};
    \node[particle,text=white] (7) at (11,0.75) {0};
\end{tikzpicture}
\endminipage
\caption{Illustration of the case $\nu = (1-p)\delta_0 + p\delta_1$. Here, $Y_4 = 3$, $Y_5=1$ and $Y_6 = 2$ with notations from the proof of Theorem \ref{theorem:fromDutta}. Knowing $\lambda^{(4)}$, the probability to obtain $\lambda^{(5)}$ as above is $1-q^3$ since there are $3$ front particles in $\lambda^{(4)}$.}
\label{fig:7}
\end{figure}

\begin{proof}[Proof of Theorem \ref{theorem:fromDutta}]
By the rescaling property \eqref{property:rescalingProperty}, it suffices to prove the result for $M=1$.

We consider an MGS $(\lambda^{(n)})_{n \geq 0}$ with weight sequences $(X^{(n)})_{n \geq 0}$ with distribution $\nu = (1-p)\delta_{0} + p \delta_1$ and starting configuration $\lambda^{(0)}=\delta_0$. Since $X^{(n)}_i \in \{0,1\}$ almost surely for all $n,i \in \N$, it is easy to show that $\lambda_i^{(n)} \in \Z_{\geq 0}$ almost surely and that \eqref{eq:keyLemma2Atoms} still holds in this case. 
Therefore, $$\mathfrak{m}(\lambda^{(n-1)}, X^{(n)}) = \lambda^{(n-1)}_1 + \max_{i \in \llbracket 1 , Y_{n-1} \rrbracket}  X^{(n)}_i$$ where $Y_k = |\Set{i \in \llbracket 1, \lambda^{(k)}(\R)\rrbracket}{ \lambda^{(k)}_i = \lambda_1^{(k)}}|$ for all $k \in \Z_+$ (see Figure \ref{fig:7} for an illustration).
Since $X^{(n)}_i \in \{0,1\}$ almost surely, $\max_{i \in \llbracket 1 , Y_{n-1} \rrbracket}  X^{(n)}_i \in \{0,1\}$ and $\mathfrak{m}(\lambda^{(n-1)}, X^{(n)}) = \lambda_1^{(n)}$ almost surely for all $n \in\N$. Notice that $Y_n=1$ if and only if the front position moved from time $n-1$ to time $n$. Then, for all $n \in \N$, we obtain 
\begin{equation}\label{eq:Dutta2}
    \lambda^{(n)}_1-\lambda^{(n-1)}_1 = \mathds{1}_{Y_n = 1}.
\end{equation}

We consider $\mathcal{F}_n$ the $\sigma$-algebra generated by $(X^{(k)})_{k \leq n}$ for all $n \in \N$. Since $Y_{n-1}$ is $\mathcal{F}_{n-1}$-measurable, $Y_{n-1}$ and $X^{(n)}$ are independent by construction. 
As a consequence, it is easy to show that $(Y_n)_{n\geq 1}$ is a discrete-time Markov chain on $\N$ with transitions $(p_{i,j})_{i,j\in \N}$ where $ p_{i,i+1}= (1-p)^i $ and $p_{i,1}=1-(1-p)^i$. 
The result of Theorem \ref{theorem:fromDutta} is trivial for $p=1$. For $p\in ]0,1[$, this Markov chain is clearly irreducible. Furthermore, a distribution $\pi$ on $\N$ is stationary if and only if
\begin{align*}
\left\{
    \begin{array}{ll}
        \forall j \geq 2\; , \pi_j = \pi_{j-1} (1-p)^{j-1}, \\
        \pi_1 = \sum_{j\geq 1} \pi_j (1-(1-p)^j).
    \end{array}
\right. 
\end{align*}

As a consequence, there is a unique stationary probability distribution $\pi$ defined by $$ \pi_j=\frac{(1-p)^{\binom{j}{2}}}{\sum_{n=1}^{\infty} (1-p)^{\binom{n}{2}}} $$ for all $j \in \N$. 
By \eqref{eq:Dutta2}, $\lambda^{(n)} = \sum_{i=1}^n \mathds{1}_{Y_i = 1}$. Then, by the ergodic theorem for Markov chains and \eqref{eq:convCouplingFK}, $C((1-p)\delta_{0} + p \delta_1) = \pi_1$, which completes the proof.
\end{proof}

\section*{Acknowledgements}
I warmly thank Sanjay Ramassamy and Arvind Singh for introducing me to the subject, for their constant support and guidance, and for their comments on the previous versions of this paper.
 
\bibliographystyle{plain}
\bibliography{refs}

\end{document}